\documentclass[11pt,a4paper,makeidx, amstex]{amsart}
\usepackage[latin1]{inputenc}        % accents
\usepackage[dvips]{graphics}

\usepackage%[dvips]
{graphicx}
\usepackage{epstopdf}
\usepackage{mathrsfs}
\let\mathcal\mathscr
\headsep 0.8 cm
           % single space
\pagestyle{empty}                            % no headers and page numbers
\oddsidemargin -10 true pt      % Left margin on odd-numbered pages.
\evensidemargin 10 true pt      % Left margin on even-numbered pages.
\marginparwidth 0.75 true in    % Width of marginal notes.
\oddsidemargin  0 true in       % Note that \oddsidemargin=\evensidemargin
\evensidemargin 0 true in
\topmargin -0.75 true in        % Nominal distance from top of page to top of
\textheight 9.5 true in         % Height of text (including footnotes and figures)
\textwidth 6.375 true in        % Width of text line.
\parindent=0pt                  % Do not indent paragraphs
\parskip=0.15 true in
\usepackage{color}		% Need the color package
\usepackage{epsfig}
\psdraft

\usepackage{amssymb}
\usepackage{amsmath}
\usepackage{amsfonts}
\usepackage{latexsym}
\usepackage[T1]{fontenc}
\usepackage[latin1]{inputenc}
\usepackage{hyperref}
\usepackage[all,ps]{xy}
\RequirePackage{amsthm}
\RequirePackage{amssymb}
\usepackage[all,ps]{xy}
\usepackage{latexsym}
\usepackage{amscd}
\pagestyle{plain}

\pagestyle{plain}
\usepackage[latin1]{inputenc}
\usepackage[T1]{fontenc}
\usepackage{color}
\RequirePackage{amsthm}
\RequirePackage{amssymb}
\usepackage[all,ps]{xy}
\usepackage{latexsym}

\def\Z{{\mathbb Z}}
\def\N{{\mathbb N}} 
\def\P{{\mathbb P}}

\def\Q{{\mathbb Q}}
\def\C{{\mathbb C}}

\def\Im{\mathop{\rm Im}\nolimits}

\def\lra{\longrightarrow}

\def\Pic{\mathop{\rm Pic}\nolimits}

\def\tilde{\widetilde}

\def\phi{\varphi}

\def\Pic{\mathop{\rm Pic}\nolimits}
\def\dim{\mathop{\rm dim}\nolimits}
\def\div{\mathop{\rm div}\nolimits}

\def\g{{\mathfrak g}}

\newcommand{\kntiposp}{K3$^{[n]}$-type }
\newcommand{\kntipo}{K3$^{[n]}$-type}
\newcommand{\reldim}{\mathrm{reldim}}

\newcommand{\ra}{\rightarrow}

\newtheorem{thm}{Theorem}[section]

\newtheorem{cor}[thm]{Corollary}
\newtheorem{definition}[thm]{Definition}
\newtheorem{rmk}[thm]{Remark}
\newtheorem{prop}[thm]{Proposition}
\newtheorem{lem}[thm]{Lemma}

\newtheorem{ex}[thm]{Example}

\newtheorem{ques}[thm]{Question}

\title[Rational curves and 0-cycles on holomorphic symplectic varieties]{Families of rational curves on holomorphic symplectic varieties and applications to 0-cycles}

\author{Fran\c{c}ois Charles}
\address{Laboratoire de Math\'ematiques d'Orsay, Universit\'e Paris-Sud, 91405 Orsay CEDEX, France}
\email{francois.charles@math.u-psud.fr}

\author{Giovanni Mongardi}
\address{Alma Mater studiorum Universit\'a di Bologna
Dipartimento di Matematica,
Piazza di Porta San Donato 5,
Bologna, 40126 Italia}
\email{giovanni.mongardi2@unibo.it}

\author{Gianluca Pacienza}
\address{Institut Elie Cartan de Lorraine,
Universit\'e de Lorraine,
B.P. 70239, F-54506 Vandoeuvre-l\'es-Nancy Cedex
 France}
\email{gianluca.pacienza@univ-lorraine.fr}

\date{\today}

\begin{document}

%%%%%%%%%%%%%%%%%%%%%%%%%
%
\begin{abstract}
We study families of rational curves on irreducible holomorphic symplectic varieties. 
We give a necessary and sufficient condition for a sufficiently ample linear system on a holomorphic symplectic variety  
of $K3^{[n]}$-type to contain a uniruled divisor covered by rational curves of primitive class. In particular, for any fixed $n$, we show that there are only finitely many polarization types of holomorphic symplectic variety  
of $K3^{[n]}$-type that do not contain such a uniruled divisor.
As an application we provide a generalization of a result due to Beauville-Voisin on the Chow group of 0-cycles on such varieties. 
\end{abstract}
%
%%%%%%%%%%%%%%%%%%%%%%%%%%
%
%%%%%%%%%%%%%%%%%%%%%%%%
%
%
\maketitle
{\let\thefootnote\relax
\footnote{\hskip-1.2em
\textbf{Key-words :} rational curves; irreducible symplectic varieties; Chow groups.\\
\noindent
\textbf{A.M.S.~classification :} 14C99, 14J28, 14J35, 14J40. \\
\noindent
FC is supported by the European Research Council (ERC) under the European Union's Horizon 2020 research and innovation programme (grant agreement No 715747).
GM is supported by ``Progetto di ricerca INdAM per giovani
ricercatori: Pursuit of IHS'' and would like to thank the Universit\'e de Lorraine for hosting him while this work was finished. GP was partially supported by the Projet ANR-16-CE40-0008  ``Foliage'', the 
 ANR project "CLASS'' no. ANR-10-JCJC-0111 and the University of Strasbourg Institute for Advanced Study (USIAS) as part of a USIAS Fellowship. 
 }}
\numberwithin{equation}{section}

\section{Introduction}

Let $S$ be a $K3$ surface and $H$  an ample divisor on $S$. By a theorem of Bogomolov and Mumford \cite{MoriMukai83}, the linear system $|H|$ contains an element whose irreducible components are rational.
A simple, yet  striking application of the existence of ample rational curves on any projective $K3$ surface $S$ has been given by Beauville and Voisin in \cite{BeauvilleVoisin04}. They remarked that any point on any rational curve on the $K3$ determines the same (canonical) zero-cycle $c_S$ of degree $1$, and proved that the image of the intersection product 
$\Pic(S)\otimes \Pic(S)\to CH_0(S)$ is contained in $\Z \cdot c_S$. Tensoring with $\mathbb Q$ we may restate these two results as follows as an equality between the following three groups:
\begin{eqnarray*}
 \Im( (j_1)_*:CH_0(R_1)_\Q \to CH_0(S)_\Q) &=& \Im( (j_2)_*:CH_0(R_2)_\Q\to CH_0(S)_\Q)\\&=&
 \Im(\Pic(S)_\Q \otimes \Pic(S)_\Q\to CH_0(S)_\Q),
\end{eqnarray*}
where $j_i:R_i\hookrightarrow S,\ i=1,2,$ are any two rational curves on the $K3$ surface $S$. 

The goal of this paper is to investigate the extent to which Bogomolov-Mumford's and  Beauville-Voisin's results can be generalized to the higher-dimensional setting. 

Let $X$ be a compact K\"ahler manifold. We say that $X$ is irreducible holomorphic symplectic -- in the text, we will often simply refer to such manifolds as holomorphic symplectic -- if $X$ is simply connected and $H^0(X, \Omega^2_X)$ is spanned by an everywhere non-degenerate form of degree $2$. These objects were introduced by Beauville in \cite{Beauville83}. The holomorphic symplectic surfaces are the $K3$ surfaces.

The cohomology group $H^2(X, \Z)$ is endowed with a natural quadratic form $q$, the \emph{Beauville-Bogomolov form}. We denote it by $q$.

It is to be noted that holomorphic symplectic varieties with $b_2>3$ are not hyperbolic. %While a result of Campana \cite{Campana92} gives a partial result in that direction, this does not seem to be known in general. In the special case of deformations of Hilbert schemes of points on $K3$ surfaces, this is a consequence of 
This has been proved by Verbitsky, cf. \cite{Verbitsky13, verbitsky2017ergodic}, using among other things his global Torelli theorem \cite{Verbitsky09}. Much less seems to be known on the existence of rational curves on (projective) holomorphic symplectic varieties.

In order to investigate those, we make the following definition.
\begin{definition}
Let $C$ be a stable curve of genus $0$, and let $f : C\ra X$ be a morphism that is unramified at all the generic points of $C$. We say that the curve $f(C)$ in $X$ is \emph{ruling} if there exists a family 
$$p : \mathcal{C}\ra S$$
of stable curves over an irreducible, quasi-projective base $S$, a point $0\in S$, and a a morphism 
$$\phi : \mathcal{C}\lra X$$
such that $C=p^{-1}(0)$, $\phi_{|C}=f,$ and $p(C)$ has codimension $1$ in $X$. 

We say that $p(\mathcal C)$ is a \emph{uniruled divisor}, that it is ruled by the stable curve $f : C\ra X$ -- or, for short, ruled by $f(C)$.
\end{definition}

A stable genus $0$ curve in $X$ is by definition a morphism $f : C\ra X$ as above.

Given a curve $R$ in $X$, we may consider the  class $[R]$ of $R$ in $H_2(X, \Q)$. Let $[R]^\vee\in H^2(X,\Q)$ be the Poincar\'e dual of $[R]$. Then $[R]^\vee$ is the class of a divisor in $X$. We say that $R$ is \emph{positive} if $q([R]^\vee)>0$, and that $R$ is \emph{ample} if $[R]^\vee$ is an ample class.

%In the case of projective holomorphic symplectic varieties of higher dimension, Beauville has stated in \cite{Beauville07} a conjecture, further refined by Voisin in  \cite{Voisin08}, predicting a similar behavior of the Chow group of algebraic cycles modulo rational equivalence. These questions have been studied on Hilbert schemes of points of $K3$ surfaces and Fano variety of lines on cubic fourfolds \cite{Voisin08, Fu13a}. These papers rely on the existence of explicit families of rational curves on the relevant varieties. The conjecture also has been proved for generalized Kummer varieties in \cite{Fu13b} and for general EPW double sextics in \cite{Fe12a}.

%With the motivations above, we investigate the existence of families of rational curves on general irreducible holomorphic manifolds of dimension $2n$. 
%We give criteria for the existence of uniruled divisors, as well as for subvarieties of codimension $2$ whose maximal rationally connected quotient -- that we shall henceforth abbreviate by MRC quotient as it is customary -- has dimension $2n-4$ -- the smallest possible value. In particular, we prove the following result. 
Recall that a holomorphic symplectic manifold is said to be of $K3^{[n]}$-type if it is a deformation of the Hilbert scheme that parametrizes zero-dimensional subschemes of length $n$ on some $K3$ surface. %We say that a subscheme $Y$ of a holomorphic symplectic manifold $X$ is Lagrangian if at any smooth point $y$ of $Y^{red}$, the tangent space $T_{Y^{red},y}$ of $Y^{red}$ at $y$ is a maximal isotropic subspace of $T_{X, y}$ with respect to the holomorphic symplectic form of $X$. 
Our main result is the following -- we will prove it in a slightly more precise and technical form in Theorem \ref{thm:precise} and Proposition \ref{prop:curvegrandi} below.

\begin{thm}\label{thm:main}
Let $n\geq 1$ be an integer. Let $\mathfrak M=\cup_{d>0}\mathfrak M_{2d}$  be the union
of the moduli spaces $\mathfrak M_{2d}$ of projective irreducible holomorphic symplectic varieties of $K3^{[n]}$-type polarized by a 
line bundle of degree $2d$.  For all $(X,H)\in \mathfrak M$, outside at most a finite number of connected components, %determined by the monodromy 
%orbit of $H$\francois{not clear: monodromy orbits are not explained -- also isn't it tautological anyway ? We could just say for almost any $d$, and then give the precise condition later}, 
the following holds:
\begin{enumerate}
\item There exists a ruling genus $0$ stable curve in $X$ with cohomology class proportional to the Poincar\'e-dual of the class of $H$;
\item there exists a positive integer $m$ such that the linear system $|mH|$ contains a uniruled divisor.
\end{enumerate}

%Let $X$ be a  projective irreducible holomorphic symplectic variety of $K_n(A)$-type. Let $p,g$ and $\epsilon$ be integers such that $p\geq g$ and $\epsilon =0$ or $1$. 
%Let $\alpha\in H_2(X,\mathbb Z)\cap N_1(X)$ be a primitive class with $q(\alpha)=2p-2-\frac{(2g-\epsilon)^2}{2n+2}$, where $q$ is the dual to the Beauville-Bogomolov quadratic form. 
%Then there is a uniruled divisor ruled by rational curves of class $\alpha$.  
\end{thm}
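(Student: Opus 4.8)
The plan is to reduce the problem to surfaces via the deformation theory of holomorphic symplectic varieties and the finiteness of the relevant moduli data, then construct rational curves on a well-chosen model using the Bogomolov--Mumford theorem on the underlying $K3$ surface. The starting point is the Torelli theorem for varieties of $K3^{[n]}$-type, which allows us to encode the polarization type $(X,H)$ via the primitive embedding of $\Z\cdot H$ into the $K3^{[n]}$ lattice $\Lambda=U^{\oplus 3}\oplus E_8(-1)^{\oplus 2}\oplus \langle -2(n-1)\rangle$, up to the action of the monodromy group. Since the curve class we seek is the Poincar\'e dual of a multiple of $H$, the first step is to analyze how $H^\vee$ (equivalently, the Beauville--Bogomolov--dual class) sits in the lattice and when the orthogonal complement $H^\perp$ can contain a sublattice isometric to the transcendental part of a projective $K3$ surface together with an ample class. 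The key point is that there are only finitely many saturated embeddings $\Z\cdot H\hookrightarrow\Lambda$ up to isometry with $q(H)=2d$ bounded divisibility, and only finitely many exceptional behaviors; outside a finite set of components, $(X,H)$ will deform to a Hilbert scheme $S^{[n]}$ on which $H$ becomes accessible.

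First I would make precise the notion of ``accessible'' polarization: a component of $\mathfrak M$ is good if a very general member $(X,H)$ admits a deformation to a pair $(S^{[n]}, H')$ where $S$ is a projective $K3$ surface and $H'$ is built from an ample class on $S$ together with the exceptional (half-)diagonal class $\delta$. The standard identification $H^2(S^{[n]},\Z)\cong H^2(S,\Z)\oplus\Z\delta$ with $q(\delta)=-2(n-1)$ lets me write $H'=aL-b\delta$ for $L$ ample on $S$ and suitable integers $a,b$. The point of ruling curves of primitive class proportional to $H^\vee$ is that on $S^{[n]}$ there are natural ruling families: taking a rational curve $R\subset |L|$ furnished by Bogomolov--Mumford and letting $n-1$ points run freely while one point traces $R$ produces a uniruled divisor whose general fiber is rational, and the class of these fibers is computable in terms of $L^\vee$ and $\delta^\vee$. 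I would then verify that for infinitely many (indeed all but finitely many) lattice-theoretic data, the resulting curve class is a rational multiple of $H^\vee$, which pins down the arithmetic condition distinguishing the finitely many bad components.

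The crux of the argument is showing that ruling curves, and the uniruled divisors they sweep, are \emph{stable under deformation} of the pair $(X,H)$ within a connected component of $\mathfrak M$. This is where the stable-genus-$0$-curve formalism of the Definition is essential: because we only require $f:C\to X$ to be unramified at generic points and allow $C$ to be a stable curve of genus $0$, the family $\mathcal C\to S$ can degenerate and the limit stays ruling, so the existence of a ruling curve is a locally closed (and, by a specialization/properness argument on the relative Kontsevich or Hilbert scheme of the universal family over the moduli space, in fact deformation-invariant) condition. Concretely, I would spread out the ruling family over the universal deformation, use properness of the moduli of stable maps to take flat limits, and check that the codimension-one condition $p(C)$ in the Definition is preserved; the genus-$0$ and class constraints are topological and hence locally constant. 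Assuming Theorem \ref{thm:precise} and Proposition \ref{prop:curvegrandi} (which presumably furnish exactly this deformation argument together with the boundedness of the ample cone needed to pass from a ruling curve to the uniruled divisor in $|mH|$), statement (1) follows from transporting the $S^{[n]}$-construction across the component, and statement (2) follows by taking the divisor swept out by the ruling family and identifying its class with $mH$ for an appropriate $m$.

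The main obstacle I anticipate is the lattice-theoretic bookkeeping that guarantees finiteness of the bad locus: one must control simultaneously the divisibility of $H$ in $\Lambda$, the monodromy orbit of the embedding, and the positivity needed to realize the curve class as an honest effective ruling class rather than a formal cohomology class. In particular, ensuring that the deformed pair $(S^{[n]},H')$ actually exists with $L$ ample on a genuine projective $K3$ (so that Bogomolov--Mumford applies) requires that the orthogonal complement $H^\perp$ contain a hyperbolic plane or at least a signature $(1,*)$ sublattice representing an ample $K3$ polarization; the finitely many excluded components are precisely those where this fails or where $H$ has anomalous divisibility. Establishing that this failure happens only finitely often — uniformly in $d$ for fixed $n$ — is the technical heart, and I would expect it to rest on an Eichler-type transitivity result for the monodromy action on lattice vectors of fixed square and divisibility, reducing infinitely many degrees $2d$ to finitely many isometry classes of embeddings.
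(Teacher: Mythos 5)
Your overall outline (deform to $S^{[n]}$ via Torelli/monodromy, construct rational curves there, deform back, finiteness via monodromy orbits of bounded square) matches the paper's architecture, but your proposal has a fatal gap in the construction step. The ruling curves you produce on $S^{[n]}$ -- one point tracing a Bogomolov--Mumford rational curve $R\in |L|$ while $n-1$ points run freely -- have class equal to the image of $[R]$ in $H_2(S^{[n]},\Z)=H_2(S,\Z)\oplus \Z r_n$, i.e.\ with \emph{zero} $r_n$-component, so the swept divisor has class proportional to $h_S$. But by Corollary \ref{cor:control-of-polarization-dual}, the dual curve class $C=H^\vee/\div(H)$ of a polarization deforms to $h_S-\mu r_n$ with $\mu\in[0,n-1)$ determined by the monodromy orbit, and for every $n\geq 2$ infinitely many connected components of $\mathfrak M$ have $\mu\neq 0$ (all orbits with nontrivial discriminant class occur for infinitely many degrees $2d$). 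Your construction therefore only reaches the $\mu=0$ components, and your claim that ``for all but finitely many lattice-theoretic data the resulting curve class is a rational multiple of $H^\vee$'' is false: the excluded set is infinite. The paper's essential new input, for which your proposal has no substitute, is Proposition \ref{prop:mainK3} together with Propositions \ref{prop:class1} and \ref{prop:class2}: Severi varieties of nodal curves of geometric genus $g=k-1$ in $|L|$ and the rational curves induced by $\mathfrak{g}^1_k$'s on their normalizations (plus exceptional tails) give ruling classes $h_S-(2k-2)r_n$ and $h_S-(2k-1)r_n$, realizing every residue $\mu$ subject to the Brill--Noether condition $p\geq g$. It is this arithmetic condition -- not the failure of a lattice embedding of a K3 polarization into $H^\perp$, as you suggest (every component deforms to some $S^{[n]}$ by Markman) -- that cuts out the finitely many exceptional components, and it is sharp by the obstruction of \cite{zal}.

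The second gap is your deformation-invariance argument. Properness of $\overline{\mathcal M_0}(\mathcal X/B,\alpha)\to B$ only allows \emph{specialization} of stable maps; what the proof requires is the opposite direction, namely that the curves constructed on the special fiber $S^{[n]}$ spread out to \emph{every} fiber of the Noether--Lefschetz family, i.e.\ that the relevant component of the relative Kontsevich space dominates $B$. This is not a formal ``locally closed condition'' statement: it is Proposition \ref{prop:defcurves} (every component has dimension $\geq 2n-2$, by the triviality of $K_X$ and Riemann--Roch on the normal complex, with domination of $B$ exactly when equality holds) combined with Proposition \ref{prop:mumford}, where Mumford's theorem on $0$-cycles in Voisin's form, together with the symplectic form, forces a ruling family to have dimension exactly $2n-2$. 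Without this mechanism the transport of the $S^{[n]}$ construction across a component of $\mathfrak M$ is unjustified; likewise, the identification of the swept divisor's class as a positive multiple of $\alpha^\vee$ in statement (2) needs the Picard-rank argument of Corollary \ref{cor:divisor}, which you do not address.
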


\begin{rmk}
{\rm {The ruling curve in (1) may be chosen to have primitive cohomology class, see the comments below. However, we are not able to control the integer $m$ in (2).}}
\end{rmk}

Some comments are in order. The Beauville-Bogomolov form induces an embedding 
$H^2(X,\mathbb Z)\hookrightarrow H_2(X,\mathbb Z), H\mapsto H^\vee$. 
By abuse of notation we denote again by $q$ the quadratic form on $H_2(X,\mathbb Z)$. We can make explicit the components of $\mathfrak M$ for which the existence is obtained:
\begin{rmk}\label{rmk:precisestate}
{\rm {
 The statement above splits into two parts. On the one hand Theorem \ref{thm:precise} ensures the existence of uniruled divisors covered by primitive rational curves if there exist 
integers $p,g$ and $\epsilon$ such that $p\geq g$ and $\epsilon =0$ or $1$ such that the following two conditions hold:
\begin{enumerate}
\item[(i)] the class $\alpha:=\frac{H^\vee}{\div(H)}\in H_2(X,\mathbb Z)$ can be written as $\gamma +(2g-\epsilon)\eta$, with 
$\eta$ in the monodromy orbit of the class of the exceptional curve on a $K3^{[n]}$ and $\gamma\in \eta^\perp$;
\item[(ii)] $q(\gamma)=2p-2$ (hence $q(\alpha)=2p-2-\frac{(2g-\epsilon)^2}{2n-2}$).
\end{enumerate}
On the other hand, thanks to Proposition \ref{prop:curvegrandi}, we can show that the two conditions above are satisfied outside at most a finite number of connected components.
Observe furthermore that conditions (i) and (ii) above determine the monodromy orbit of the polarization $H$ (cf. Corollary \ref{cor:control-of-polarization-dual} for details).}}
\end{rmk}
We list below some relevant cases in which the conditions (i) and (ii) of Remark \ref{rmk:precisestate} are easily seen to be satisfied. 

\begin{rmk}\label{rmk:sufficient}
{\rm {
\begin{enumerate}
\item[(i)] If $q(\alpha)\geq n-1$, 
then a multiple of $H$ is uniruled by primitive rational curves of class $\alpha$ (see Proposition \ref{prop:curvegrandi}). 

\item[(ii)] If $\rho(X)\geq 2$ then $X$ always contains an ample uniruled divisor covered  by primitive rational curves (cf. Corollary \ref{cor:rho2}). 
\item[(iii)] If $n\leq 7$ then the conclusion of the theorem holds for {\it all} the connected components of $\mathfrak M$ (cf. Remark \ref{rmk:npiccolo}).
\item[(iv)] If $n-1$ is a power of a prime number, then by \cite[Lemma 9.2 and subsequent comment]{Markman11}, the monodromy group is maximal. Therefore it suffices to check that the square $q(\alpha)$
is of the form  $2p-2-\frac{(2g-\epsilon)^2}{2n-2}$, with $p\geq g$. 
\end{enumerate}
}}

\end{rmk}

%%%%%%%%%%%%Vecchi enunciati
%First we prove the analogue of the Bogomolov-Mumford result for such varieties. 
%\begin{thm}\label{thm:main}
%Let $X$ be a projective holomorphic symplectic variety of $K3^{[n]}$-type. Let $p\geq g$ and $\epsilon=0$ or $1$. Let $\alpha \in H_2(X, \mathbb Z)\cap N_1(X)$ be a primitive class
%such that 
%\begin{equation}\label{eq:cond}
%q_X(\alpha)=2p-2-\frac{(2g-\epsilon)^2}{2n-2},
%\end{equation} where $q_X$ is the dual to the Beauville-Bogomolov quadratic form on $X$. 
%Then there is a uniruled divisor ruled by rational curves of class $\alpha$. 
%\end{thm}
%
%Moreover we prove that, for every dimension, there is at most a finite number of components of the moduli space of polarized manifolds 
%$(X,H)$ of \kntiposp  where the theorem above does not apply. 
%
%
%\begin{cor}
%Let $\mathcal{M}_n$ be the moduli space of all polarized manifolds of \kntiposp or of Kummer type with $n$ fixed. Then, the number of components of $\mathcal{M}_n$ whose general points $(X,H)$ do not have a uniruled divisor ruled by a rational curve of primitive class is at most finite.
%\end{cor}
%%This result was already known by work of Voisin \cite{Voisin04} and Amerik-Voisin \cite{AmerikVoisin08} in the case of the Fano variety of lines on a smooth cubic fourfold.As in \cite{BeauvilleVoisin04}, we are able to show that the existence of such a  surface makes it possible to define a canonical zero-cycle of degree $1$ on $X$ (cf. Corollary \ref{cor:zero}). 

The existence of uniruled divisors ruled by primitive rational curves on {\it any} projective holomorphic symplectic variety of $K3^{[n]}$-type 
was wrongly claimed in \cite{CP}. Counterexamples were recently provided by Oberdieck, Shen  and  Yin in \cite[Corollary A.3]{zal}.
The proof presented in \cite{CP} was based upon the following three ingredients:
(a) the existence of a ``controlled'' polarized deformation of a polarized holomorphic symplectic variety $(X,H)$
 to a $((K3)^{[n]}, H')$, as a consequence of Verbitsky's  global Torelli theorem and Markman's study of the monodromy group (see Section 2); (b) a geometric criterion to deform a rational curve on a holomorphic symplectic variety $X$ along its Hodge locus inside the moduli space of $X$, which we derive from Mumford's theorem on 0-cycles and deformation theoretic arguments (cf. Section 3); (c)  the existence of uniruled divisors on a $(K3)^{[n]}$, via points on nodal curves in the hyperplane linear system (see Section 4). These three parts of the proof are correct and will be presented here essentially as in \cite{CP}. The main difference is that, after the appearance of \cite{zal} we realised that the examples we provided in (c) did not (and actually could 
 not, because of \cite[Corollary A.3]{zal}) cover all the connected components of $\mathfrak M$. The same type of considerations and results hold in the generalized Kummer case, treated in \cite{MP} and amended in \cite{MPcorr}.

In the present paper we also show that conditions (i) and (ii) in Remark \ref{rmk:precisestate} are precisely satisfied in {\it all the cases} where the obstruction discovered by \cite{zal} does not prevent
uniruled divisors covered by primitive rational curves to exist. In other words
we show that our result is sharp (see subsection \ref{ss:k3no} for all the details). 
In all the cases where the theorem fails, using \cite{klm2}, we can check the existence of 
a family of the expected dimension of rational curves covering a coisotropic subvariety of codimension $c\geq 2$ (cf. Proposition \ref{prop:ultima}. Moreover in some of these cases we can actually prove that 
$c=2$ (see Theorem \ref{thm:codim2} in subsection \ref{ss:codim2}). 

In Subsection 5.2 we discuss  to which extent it might be possible to find uniruled divisors covered by possibly non-primite rational curves. We find an explicit condition (cf. Prop. \ref{prop:mbound}) for the existence of such divisors
 which can be easily checked on  examples. For instance this allows us to check 
the existence of ample uniruled divisors covered by non-primitive rational curves {\it for all the connected components} of the moduli 
space $\mathfrak M$ up to dimension $26$. 
 
 It should be noted that applications of the existence of uniruled divisors to the study of Chow groups of $0$-cycles does not make use of the primitivity of the relevant rational curves.
These applications are presented in Section 6. 
%In  we apply Theorem \ref{thm:main} to the study of rational equivalence on 0-cycles. 
To state our results recall that if $Y$ is a variety,  $CH_0(Y)_{hom}$ is the subgroup of $CH_0(Y)$ consisting of zero-cycles of degree zero.
\begin{definition}\label{definition:S1}
Let $D$ be an irreducible divisor on $X$. We denote by $S_1 CH_0(X)_{D,hom}$ the subgroup  $$S_1 CH_0(X)_{D,hom}:=\Im \big( CH_0(D)_{hom}\to CH_0(X)\big),$$
of $CH_0(X)$. 
We denote by $S_1 CH_0(X)_{D}$ the subgroup 
$$S_1 CH_0(X)_{D}:=\Im \big( CH_0(D)\to CH_0(X)\big)$$
of $CH_0(X)$.
\end{definition}
We prove the following. %(see Theorem \ref{thm:appliA} for a more general statement). 

%\francois{I think the statement below does not make sense if we do not introduce $S^1$ before. Also what is $D$ ?}

\begin{thm}\label{thm:appliA}
Let $X$ be a projective holomorphic symplectic variety.  Suppose that $X$ 
possesses an ample ruling curve. Then  the subgroups $S_1 CH_0(X)_{D,hom}$ and $S_1 CH_0(X)_{D}$ are independent of the irreducible uniruled divisor $D$.
\end{thm}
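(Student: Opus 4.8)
The plan is to fix one uniruled divisor coming from the hypothesis and to prove that every irreducible uniruled divisor $D$ produces exactly the same two subgroups; independence from $D$ then follows by transitivity. Let $R_0$ be the given ample ruling curve, let $\beta_0=[R_0]\in H_2(X,\mathbb{Z})$ be its class, so that $\beta_0^\vee\in H^2(X,\mathbb{Z})$ is ample, and let $D_0$ be the irreducible uniruled divisor it rules, together with its covering family of genus-$0$ stable curves of class $\beta_0$. I will use two positivity inputs supplied by the hypothesis: that $[D_0]$ is an ample class and that $\beta_0^\vee$ is ample. The technical heart is the following mechanism, which I would isolate as a lemma: if $Z\subset X$ is an irreducible uniruled divisor covered by a family of genus-$0$ stable curves of class $\gamma$, and $W$ is a nonzero effective divisor with $\gamma\cdot W>0$, then every point of $Z$ is rationally equivalent in $X$ to a point of $W$. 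To prove the lemma I would first compactify the covering family to a proper family of genus-$0$ stable curves, so that through every point $z\in Z$ --- not merely a general one --- there passes a connected genus-$0$ stable curve $R_z\subset Z$ of class $\gamma$; since $\gamma\cdot W>0$ we get $R_z\cap W\neq\emptyset$, and because a connected genus-$0$ stable curve has Chow group of $0$-cycles equal to $\mathbb{Z}$, the class of $z$ coincides in $CH_0(X)$ with that of any chosen point $w\in R_z\cap W\subset W$.

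With the lemma available the two inclusions are symmetric. For $S_1 CH_0(X)_{D}\subseteq S_1 CH_0(X)_{D_0}$ I would take $Z=D$, covered by its ruling curves of class $\beta_D$, and $W=D_0$: as $[D_0]$ is ample it meets every effective curve, so $\beta_D\cdot D_0>0$ and every point of $D$ is rationally equivalent to a point of $D_0$. For the reverse inclusion I would take $Z=D_0$, covered by the ample curves of class $\beta_0$, and $W=D$: since $\beta_0^\vee$ is ample and $[D]$ is a nonzero effective class we have $\beta_0\cdot D=q(\beta_0^\vee,[D])>0$, by the positivity of the pairing between an ample class and a pseudoeffective one, so every point of $D_0$ is rationally equivalent to a point of $D$.

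It remains to upgrade these pointwise statements to the subgroups. Writing an arbitrary $0$-cycle supported on $D$ as an integral combination of closed points and replacing each point by its rationally equivalent counterpart on $D_0$ shows that $\Im\big(CH_0(D)\to CH_0(X)\big)$ is contained in $\Im\big(CH_0(D_0)\to CH_0(X)\big)$, and the reverse replacement gives the opposite inclusion, so $S_1 CH_0(X)_{D}=S_1 CH_0(X)_{D_0}$. For the homologically trivial variant I would use that, $D_0$ and $D$ being irreducible, a $0$-cycle of degree $0$ supported on either of them is automatically homologically trivial on it; hence the same replacement carries $CH_0(D)_{hom}$ into $CH_0(D_0)_{hom}$ and back, yielding $S_1 CH_0(X)_{D,hom}=S_1 CH_0(X)_{D_0,hom}$. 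Applying this to two irreducible uniruled divisors $D_1$ and $D_2$ identifies both of their subgroups with those attached to $D_0$, which is the assertion.

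The main obstacle I anticipate is the requirement, inside the lemma, that a ruling curve pass through every point of $Z$ and not only through a general one: this forces one to pass to a proper compactification of the covering family and to control the limiting stable curves, checking that they remain connected genus-$0$ curves of class $\gamma$ --- so that they still meet $W$ and still carry a single rational-equivalence class of points. The second delicate point is purely lattice-theoretic: the strict positivity $q(\beta_0^\vee,[D])>0$ of an ample class against an arbitrary nonzero effective divisor, together with the ampleness of $[D_0]$ provided by the hypothesis; both rest on the standard description, via the Beauville--Bogomolov form, of the positive and movable cones on a projective holomorphic symplectic variety, according to which an ample class pairs strictly positively with every nonzero pseudoeffective class.
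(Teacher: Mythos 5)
Your core mechanism --- through every point of an irreducible uniruled divisor there passes a connected genus-$0$ stable curve of the ruling class, all of whose points are rationally equivalent in $X$, so that positivity of the intersection number with another divisor lets you slide any point into that divisor --- is exactly the content of the paper's Lemma \ref{lemma:basic-equality-S1}, and your worry about limit curves is correctly resolved by properness of the genus-$0$ Kontsevich space, at the same level of rigor as the paper. So the route is essentially the paper's. However, one step, as written, is unjustified, and it is precisely the point where the paper's proof does extra bookkeeping. You posit a single divisor $D_0$ that is simultaneously \emph{irreducible} (needed so your lemma applies with $Z=D_0$ in the inclusion $S_1CH_0(X)_{D_0}\subseteq S_1CH_0(X)_D$) and of \emph{ample class} (needed so that $\beta_D\cdot D_0>0$ in the reverse inclusion). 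Neither property is ``supplied by the hypothesis'': the hypothesis only gives ampleness of $\beta_0^\vee$, and the ampleness of the class of the ruled divisor is the nontrivial proportionality statement of Corollary \ref{cor:divisor}(2). More seriously, the divisor produced by that corollary may well be reducible; its irreducible components are each ruled by a curve of class $\beta_0$, but no single component need have ample class. So a divisor with both of your required properties may simply not exist, and your $D_0$ is not available.

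The repair stays inside your framework and reproduces the paper's proof. Let $H=\sum_i H_i$ be the (possibly reducible) uniruled divisor of ample class given by Corollary \ref{cor:divisor}, each component $H_i$ ruled by curves of class $\beta_0$. Applying your lemma with $Z=H_i$ (legitimate: $H_i$ is irreducible and ruled by class $\beta_0$) and $W=D$ gives $S_1CH_0(X)_{H_i}\subseteq S_1CH_0(X)_D$ for every $i$, since $\beta_0\cdot D=q(\beta_0^\vee,D)>0$; applying it with $Z=D$ and $W=H$ (your lemma never needs $W$ irreducible) gives $S_1CH_0(X)_D\subseteq \sum_i S_1CH_0(X)_{H_i}$, since $[H]$ is ample. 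The two containments force $S_1CH_0(X)_D=\sum_i S_1CH_0(X)_{H_i}$, and the right-hand side does not depend on $D$; the same replacement word for word handles the degree-zero subgroups. This is exactly the structure of the paper's proof, which compares an arbitrary $D$ with a suitable component $H_i$ after first showing that all the groups $S_1CH_0(X)_{H_i}$ coincide.
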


%{\color{red} 
%Attenzione: formulato cosi non ha molto senso. Basta 1 ampio unirigato irriducibile ma non e' detto che lo abbiamo}

%\begin{thm}\label{thm:mainapp1}
%Let $X$ be a projective holomorphic symplectic variety of $K3^{[n]}$-type. Suppose that $X$ has a primitive class $\alpha \in H_2(X, \mathbb Z)\cap N_1(X)$ 
%as in Remark \ref{rmk:precisestate}. 
% Then
% for any two irreducible uniruled divisors $j_i:D_i\hookrightarrow X,\ i=1,2$ we have
%$$
%\Im( (j_1)_*:CH_0(D_1)_\Q \to CH_0(X)_\Q) = \Im( (j_2)_*:CH_0(D_2)_\Q\to CH_0(X)_\Q)$$
%\end{thm}
In light of the result above we set 
$$S_1 CH_0(X)_\Q:= \Im( j_*:CH_0(D)_\Q\to CH_0(X)_\Q)$$
where $j:D\hookrightarrow X$ is any irreducible uniruled divisor. It is natural to ask whether such a subgroup of $CH_0(X)_\Q$ has an intersection-theoretic interpretation, as for  $K3$'s. This is indeed the case. %(cf. \ref{thm:appliB} for a more general statement). 

%\begin{thm}\label{thm:mainapp2}
%Let $X$ be a projective holomorphic symplectic variety of $K3^{[n]}$-type as in Theorem \ref{thm:mainapp1}. Then
%the image of the natural intersection product 
%$$
% \Pic(X)\otimes CH_1(X)_\Q\to CH_0(X)_\Q 
%$$
%is equal to $S_1 CH_0(X)_\Q$.
%\end{thm}

\begin{thm}\label{thm:appliB}
Let $X$ be a projective holomorphic symplectic variety. Suppose that $X$ 
possesses an ample ruling curve and that the group of cohomology classes of curves on $X$ is generated over $\Q$ by classes of ruling curves. Then for any nontrivial $L\in \Pic(X)$ we have
$$
S_1 CH_0(X)_{hom}= L\cdot CH_1(X)_{hom} \ \ {\textrm{and }}\ 
 S_1 CH_0(X)= L\cdot CH_1(X).$$
\end{thm}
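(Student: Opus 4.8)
The plan is to establish the two equalities after tensoring with $\Q$ -- the natural level, exactly as in the reformulation of the Beauville--Voisin theorem recalled in the introduction -- by treating each inclusion separately. The single external input that makes everything work is Theorem \ref{thm:appliA}: since $X$ carries an ample ruling curve, the group $S_1CH_0(X)_D$ is independent of the irreducible uniruled divisor $D$, so I am free to compute it with whichever uniruled divisor is convenient at each step. Throughout, for a uniruled divisor $D$ ruled by a family of genus $0$ stable curves $R_t$ (with $t$ in the base $B$ of the ruling) of fixed class $r=[R_t]$, write $o_D(t)\in CH_0(X)$ for the class of a point lying on $R_t$; as $R_t$ is a genus $0$ stable curve, $CH_0(R_t)=\Z$ and every point of $R_t$ has the same image $o_D(t)$. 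Since every point of $D$ lies on some $R_t$, the group $S_1CH_0(X)_D$ is generated by the classes $o_D(t)$ and $S_1CH_0(X)_{D,hom}$ by the differences $o_D(t)-o_D(t')$.

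\emph{Inclusion $S_1CH_0(X)\subseteq L\cdot CH_1(X)$.} The elementary computation is that for every divisor class $M$ and every $t$ one has $M\cdot R_t=(M\cdot r)\,o_D(t)$ in $CH_0(X)$, because $M|_{R_t}$ has degree $M\cdot r$ on the genus $0$ curve $R_t$. Now invoke the second hypothesis: the ruling curve classes generate $N_1(X)_\Q$, and since $L\neq 0$ the pairing $L\cdot(-)$ is nonzero on $N_1(X)_\Q$, so there is a ruling curve class $r$ with $L\cdot r\neq 0$. Let $D$ be a uniruled divisor ruled by curves of that class. By Theorem \ref{thm:appliA}, $S_1CH_0(X)=S_1CH_0(X)_D$, and each generator satisfies $o_D(t)=\frac{1}{L\cdot r}\,L\cdot R_t\in L\cdot CH_1(X)_\Q$; taking differences gives $o_D(t)-o_D(t')=\frac{1}{L\cdot r}\,L\cdot(R_t-R_{t'})\in L\cdot CH_1(X)_{hom,\Q}$, the cycle $R_t-R_{t'}$ being homologically trivial. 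This yields both $S_1CH_0(X)_\Q\subseteq L\cdot CH_1(X)_\Q$ and its homological analogue.

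\emph{Inclusion $L\cdot CH_1(X)\subseteq S_1CH_0(X)$.} Here the point is to represent $L$ itself through uniruled divisors up to rational equivalence. Because $X$ is holomorphic symplectic, $H^1(X,\cO_X)=0$, so $\Pic(X)=NS(X)$ injects into $H^2(X,\Z)$ and $\Pic(X)_\Q=CH^1(X)_\Q=N^1(X)_\Q$. The Beauville--Bogomolov form identifies $N^1(X)_\Q$ with $N_1(X)_\Q$; under this duality the spanning of $N_1(X)_\Q$ by ruling curve classes, combined with the proportionality $[D]\propto[R]^\vee$ between a uniruled divisor and the Poincar\'e dual of its ruling curve (Theorem \ref{thm:main}(1) and Section 2), shows that the classes of uniruled divisors span $N^1(X)_\Q=\Pic(X)_\Q$. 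Writing $L=\sum_k c_k[D_k]$ in $\Pic(X)_\Q$ with each $D_k$ irreducible uniruled, for any $1$-cycle $\gamma$ one obtains $L\cdot\gamma=\sum_k c_k\,(D_k\cdot\gamma)$; each $D_k\cdot\gamma$ is represented by intersecting the actual divisor $D_k$ with $\gamma$, hence is supported on $D_k$ and lies in $S_1CH_0(X)_{D_k}=S_1CH_0(X)$ by Theorem \ref{thm:appliA}. Thus $L\cdot CH_1(X)_\Q\subseteq S_1CH_0(X)_\Q$, and if $\gamma$ is homologically trivial then so is $L\cdot\gamma$, giving the homological inclusion. Combining the two inclusions gives both asserted equalities with $\Q$-coefficients.

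The main obstacle is twofold. First, the heart of the second inclusion is the reduction of an arbitrary nontrivial $L$ to uniruled classes: this rests on the identification $\Pic(X)_\Q=N^1(X)_\Q$ (so that a numerical decomposition is a genuine decomposition up to rational equivalence) and on the Beauville--Bogomolov duality turning the spanning hypothesis on curves into a spanning statement on uniruled divisor classes -- it is essential here that the ruling curve classes span, not merely that one ample ruling curve exists. Second, and more seriously, both inclusions introduce denominators -- the factor $1/(L\cdot r)$ in the first, and the coefficients $c_k$ in the second -- so the argument as presented only yields the equalities after $\otimes\Q$. Upgrading to the integral statement would require a ruling class $r$ with $L\cdot r=\pm1$ together with an integral decomposition of $L$ into uniruled classes; I expect this to be the genuinely delicate point, and the place where one must use more than the $\Q$-spanning hypothesis.
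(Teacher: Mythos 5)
Your proof is correct and is essentially the paper's own argument: the paper proves $L\cdot CH_1(X)_{hom}\subset S_1CH_0(X)_{hom}$ by writing $L=\sum m_iD_i$ with each $D_i$ irreducible uniruled and invoking Proposition \ref{prop:appli}, and proves the reverse inclusion by choosing a uniruled divisor $D$ whose ruling curve $R_D$ satisfies $L\cdot R_D\neq 0$ and writing each point of $D$ as $\frac{1}{L\cdot R_D}\,L\cdot R_{x}$ — exactly your two steps, in the opposite order. Your concern about denominators is moot, since the paper fixes rational coefficients for all Chow groups at the start of Section 6; if anything, your nondegeneracy argument producing a ruling class $r$ with $L\cdot r\neq 0$ is sounder than the paper's assertion that a component of the ample uniruled divisor always works, which could fail when $q(L,H)=0$.
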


In particular the conclusions of the results above hold for all projective holomorphic symplectic variety of $K3^{[n]}$-type, for $n\leq 13$, and, for higher $n$, for all but finitely many components of $\mathfrak M$, as specified in Remark \ref{rmk:precisestate}. The hypothesis on the Picard group is verified in applications by showing the existence of uniruled divisors linearly equivalent to certain multiple of each ample divisor. There is no evidence this could not hold in general.

The theorems above may be regarded as a higher dimensional analogue of the Beauville-Voisin result. 
It is important to notice that, 
for holomorphic symplectic varieties of higher dimension, Beauville has stated in \cite{Beauville07} a far-reaching conjectural generalization of their result, called ``weak splitting property'', which can be deduced by a  (conjectural) splitting of the (conjectural) Bloch-Beilinson filtration on the Chow group. 
The conjecture was further refined by Voisin in  \cite{Voisin08}. 
These conjectures have been intensively studied in the last years. 
After 
the appearence of a first version of this paper Voisin \cite{Voi15} has unveiled a surprising conjectural connection
between the weak splitting property conjecture and the existence of subvarieties whose zero cycles are supported in lower dimension. A first instance of such an existence result is provided by Theorem \ref{thm:main}. For a (non-exhaustive) list of works in this research direction see \cite{Voisin08, Fe12a, Fu13a, Fu13b, huybrechts2013curves, Riess14, voisin2015rational, lat-rem, lin-lagr, lin-kummer, LP2, yinfinite, SV, MP, vial2017motive, shen2017derived, shen2017k3, FLVS, zal, voisin2018triangle}.

%As for the methods and the organization of the paper, the existence result \ref{thm:main} is obtained by combining the three following ingredients: (i) the existence of a ``controlled" polarized deformation of a polarized holomorphic symplectic variety $(X,h)$
% to a $((K3)^{[n]}, h')$, as a consequence of Verbitsky's  global Torelli theorem and Markman's study of the monodromy group (see Section 2); (ii) a geometric criterion to deform a rational curve on a holomorphic symplectic variety $X$ along its Hodge locus inside the moduli space of $X$, which we derive from Mumford's theorem on 0-cycles and deformation theoretic arguments (cf. Section 3); (iii)  the existence of uniruled divisors on a $(K3)^{[n]}$, via points on nodal curves in the hyperplane linear system (see Section 4). These three parts of the proof are as in \cite{CP}. The main difference is that, after the appearance of \cite{zal} we realised that the examples we provided did not (and actually could 
% not, because of \cite[Corollary A.3]{zal}) cover all the cases. 
% 
% The analysis of these cases left out from our main theorem is carried out in subsections \ref{ss:k3no} and \ref{ss:codim2}.
% The applications to the study of the $CH_0$ are presented in Section 5. 

We end the paper with some speculations on possible generalizations of our results.

After the appearance of \cite{CP}, other articles studied the existence of rational curves on projective holomorphic symplectic varieties. We already mentioned \cite{MP, MPcorr} where the analogous existence results of ample uniruled divisors are established for deformations of generalized Kummer varieties. The existence of primitive rational curves moving in a family of the expected dimension on projective holomorphic symplectic varieties deformations of punctual Hilbert schemes on a $K3$ surface or of generalized Kummer varieties is shown in \cite[Theorem 3.2]{MO} and \cite[Theorem 5.1]{MP}.  
While finishing the first version of the paper \cite{CP}, independent work of Amerik and Verbitsky \cite{AmerikVerbitsky14} has appeared and Section 4 of \cite{AmerikVerbitsky14} have some overlaps with results and arguments presented here in section 3 concerning the deformations of rational curves on holomorphic symplectic varieties. Amerik and Verbitsky are more concerned with negative rational curves, while we focus on positive ones (i.e. dual to an ample class). Since the goals and the results of the two papers are quite different, for the sake of completeness we did not try to eliminate similar discussions. We refer the reader to Section 3 below for the precise references to the similar results appearing in \cite{AmerikVerbitsky14}.

\noindent{\bf Acknowledgements.} We thank Daniel Huybrechts and Claire Voisin for interesting discussions on the subject of this paper. In particular C. Voisin kindly suggested the applications to Chow groups presented here. We are  grateful to Eyal Markman for enlightening discussions and comments, and for sharing with us some unpublished notes of his. %The cohomology computations at the end of section 4 are due to him. 
We are grateful to Apostol Apostolov for his help with Theorem \ref{thm:control-of-polarization}. We thank Qizheng Yin for valuable discussions on \cite{zal}. 
 Thanks to Giuseppe Ancona for a useful suggestion. We thank Giulia Sacc\`a for pointing out to us that the proof of former Corollary 3.6 was incomplete, which led us to modify accordingly the manuscript. 
\bigskip

We always work over the field $\C$ of complex numbers.

%%%%%%%%%%%%%%%%%%%%%%%%
%

\section{Varieties of $K3^{[n]}$-type and their polarizations}

In this section we collect a number of known results on Hilbert schemes of points on a $K3$ surface and then move on to study the {\it polarized} deformations of  irreducible holomorphic symplectic varieties of $K3^{[n]}$-type.

\subsection{Lattices.}
The general theory of irreducible holomorphic symplectic varieties as in \cite{Beauville83} shows that the group $H^2(X, \Z)$ is endowed with a natural symmetric bilinear form $q_X$ of signature $(3, 20)$, the \emph{Beauville-Bogomolov form}. When no confusion is possible we will denote the square $q_X(h)$ of an element $h\in H^2(X, \Z)$ simply by $q(h)$ or  by $h^2$. Therefore $(H^2(X, \Z), q_X)$ is naturally a lattice.  We try to use the notation $h$ for elements of the cohomology lattice 
and the capital letter $H$ for divisors/line bundles (but no confusion should hopefully arise if we do otherwise somewhere!).

\begin{definition}\label{def:div}
Let $\Lambda$ be a free $\Z$-module of finite rank endowed with a symmetric bilinear form. If $h$ is an element of $\Lambda$, the \emph{divisibility} of $h$ is the nonnegative integer $t$ such that 
$$h\cdot \Lambda=t\Z.$$ It will be denoted by $\div(h)$.
If $(X, h)$ is a polarized irreducible holomorphic symplectic variety, then the divisibility of $h$ is its divisibility as an element of the lattice $H^2(X, \Z)$ endowed with the Beauville-Bogomolov form.
\end{definition}

We will use the following maps relating a lattice $(\Lambda,q)$ and its dual $\Lambda^\vee$. %For more details we refer the reader to \cite{Nikulin80}.\francois{is Nikulin really useful here ?}

There is a natural map
\begin{equation}\label{eq:emb}
\Lambda \hookrightarrow \Lambda^\vee,\ \ \ \lambda\mapsto q(\lambda,\cdot).
\end{equation}

Given a class $c \in \Lambda^\vee$ there exists a unique class 
$\lambda_c \in \Lambda \otimes \Q$ such that for all $\lambda'\in \Lambda$ we have
$$
 q(\lambda_c,\lambda')=c(\lambda'). 
$$
This induces a natural map
\begin{equation}\label{eq:embdual}
\Lambda^\vee \hookrightarrow \Lambda\otimes \mathbb Q, \text{     }
c\mapsto \lambda_c.
\end{equation}
Using the above we can endow $\Lambda^\vee$ with a quadratic form taking {\it rational} values. By abuse of notation 
we will still denote it by $q$. 

We will often make use of the following set-theoretic map:
\begin{equation}\label{eq:nonlin}
\Lambda \hookrightarrow \Lambda\otimes \Q,\ \ \ \lambda\mapsto \frac{\lambda}{\div(\lambda)}.
\end{equation}
One easily checks that the image is contained in the image of $\Lambda^\vee$ under the map (\ref{eq:embdual}) and gives all primitive
elements in it.  

Finally the above map will be composed with the projection onto the {\it discriminant group} $\Lambda^\vee/\Lambda$:
\begin{equation}\label{eq:nonlinclass}
\Lambda \hookrightarrow \Lambda^\vee \to \Lambda^\vee/\Lambda,\ \ \ \lambda\mapsto \Big[\frac{\lambda}{\div(\lambda)}\Big].
\end{equation}

\subsection{Hilbert schemes of points on a $K3$ surface.}
Let $S$ be a compact complex projective surface. If $n$ is a positive integer, denote by $S^{[n]}$ the Hilbert scheme (or the Douady space in the non-projective case) of length $n$ subschemes of $S$. By \cite{Fogarty68}, $S^{[n]}$ is a smooth complex variety. The general theory of the Hilbert scheme shows that $S^{[n]}$ is projective if $S$ is.

Assume that $S$ is a $K3$ surface. By \cite{Beauville83}, $S^{[n]}$ is an irreducible holomorphic symplectic variety: it is simply connected, and the space $H^0(S^{[n]}, \Omega^2_{S^{[n]}})$ is one-dimensional, generated by a holomorphic symplectic form. Let $X$ be an irreducible holomorphic symplectic variety. We say that $X$ is of $K3^{[n]}$-type if $X$ is deformation equivalent, as a complex variety, to $S^{[n]}$, where $S$ is a complex $K3$ surface. 

Let $S$ be a $K3$ surface, and let $n>1$ be an integer. We briefly recall the description of the group $H^2(S^{[n]}, \Z)$ as in \cite[Proposition 6]{Beauville83}. 

Let $S^{(n)}$ be the $n$-th symmetric product of $S$, and let $\epsilon : S^{[n]}\ra S^{(n)}$ be the Hilbert-Chow morphism. The map 
$$\epsilon^* : H^2(S^{(n)}, \Z)\ra H^2(S^{[n]}, \Z)$$
is injective. Furthermore, let $\pi : S^n\ra S^{(n)}$ be the canonical quotient map, and let $p_1, \ldots, p_n$ be the projections from $S^n$ to $S$. There exists a unique map 
$$i : H^2(S, \Z)\ra H^2(S^{[n]}, \Z)$$
such that for any $\alpha\in H^2(S, \Z)$, $i(\alpha)=\epsilon^*(\beta)$, where $\pi^*(\beta)=p_1^*(\alpha)+\ldots+p_n^*(\alpha)$. The map $i$ is an injection.

Let $E_n$ be the exceptional divisor in $S^{[n]}$, that is, the divisor that parametrizes non-reduced subschemes (we will drop the index $n$ and simply write $E=E_n$ when no confusion is possible). The cohomology class of $E$ in $H^2(S^{[n]}, \Z)$ is uniquely divisible by $2$ -- see \cite[Remarque after Proposition 6]{Beauville83}. Let $\delta:=\delta_n$ be the element of $H^2(S^{[n]}, \Z)$ such that $2\delta_n=[E]$. Then we have
\begin{equation}\label{H2}
H^2(S^{[n]}, \Z)=H^2(S, \Z)\oplus_{\perp}\Z\delta_n,
\end{equation}
where the embedding of $H^2(S, \Z)$ into $H^2(S^{[n]}, \Z)$ is the one given by the map $i$ above.

The decomposition (\ref{H2}) is orthogonal with respect to the Beauville-Bogomolov form $q$, and the restriction of $q$ to $H^2(S, \Z)$ is the canonical quadratic form on the second cohomology group of a surface induced by cup-product. We have 
$$\delta^2=-2(n-1).$$

 By Poincar\'e duality, $H_2(S^{[n]}, \Z)$ may be identified to the dual lattice of $H^2(S^{[n]}, \Z)$.
Therefore, using (\ref{eq:embdual}),
to a class $Z\in H_2(S^{[n]}, \Z)$ we can associate a unique class 
$D_Z\in H^2(S^{[n]}, \Q)$ such that for all $D'\in H^2(S^{[n]}, \Z)$ we have
$$
 q(D_Z,D')=Z\cdot D'. 
$$
The class $D_Z$ will be called the {\it dual} of $Z$ with respect to the Beauville-Bogomolov quadratic form. 
% This defines an embedding
% \begin{equation}\label{eq:dual}
% H_2(S^{[n]}, \Z)\hookrightarrow H^2(S^{[n]}, \Q),\ Z\mapsto D_Z,
% \end{equation}
% which, via Poincar\'e duality, corresponds to the embedding of the dual $\Lambda^\vee$
% of a lattice $\Lambda$ inside $\Lambda\otimes \Q$.
% We set
%$$
% q(Z):= q(D_Z).
%$$
In this way we obtain a quadratic form on the homology group $H_2(S^{[n]}, \Z)$ taking rational values and such that 
$$
H_2(S^{[n]}, \Z)= H_2(S, \Z)\oplus^{\perp} \Z r_n
$$
where $r_n$ is the homology class orthogonal to $H_2(S, \Z)$ and such that 
\begin{equation}\label{eq:r.delta}
r_n\cdot \delta_n=-1.
\end{equation}
In particular we have 
$$
 r_n = \frac{1}{2(n-1)} \delta_n
$$
which implies
$$
 q(r_n) = -\frac{1}{2(n-1)}.
$$
 Geometrically $r_n$ is the class of an exceptional rational curve which is the general fiber of the Hilbert-Chow morphism (see e.g. \cite{HassettTschinkel10}). 
 
 By abuse of notation, if $h\in H^2(S,\Z)$, we will again denote by $h$ the induced class
 in $H^2(S^{[n]}, \Z)$ as well as that in $H_2(S^{[n]}, \Z)$, using the embedding (\ref{eq:emb}).

\subsection{Polarized deformations of varieties of $K3^{[n]}$-type}
In this paper, we are interested in the possible deformation types for primitively polarized varieties $(X, h)$, where $X$ is a variety of $K3^{[n]}$-type and $h$ is a primitive polarization of $X$, that is, the numerical equivalence class of a primitive and ample line bundle on $X$. 

\begin{definition}
Let $X$ and $X'$ be two compact complex manifolds, and let $h, h'$ be numerical equivalence classes of line bundles on $X$ and $X'$ respectively. We say that the pairs $(X, h)$ and $(X', h')$ are deformation equivalent if there exists a connected complex variety $S$, a smooth, proper morphism $\pi : \mathcal X\ra S$, a line bundle $\mathcal L$ on $\mathcal X$ and two points $s, s'$ of $S$ such $(\mathcal X_s, c_1(\mathcal L_s))$ is isomorphic to $(X, h)$ and $(\mathcal X_{s'}, c_1(\mathcal L_{s'}))$ is isomorphic to $(X', h')$.
\end{definition}

\begin{rmk}\label{remark:projective-def-classes}
{\em Let $X$ be a variety of $K3^{[n]}$-type, and let $h$ be a polarization on $X$. Then Markman shows in \cite[Proposition 7.1]{Markman11} that there exists a $K3$ surface $S$ and a polarization $h'$ on $S^{[n]}$ such that $(X, h)$ is deformation equivalent to $(S^{[n]}, h')$. 
}
\end{rmk}

\bigskip

In the surface case, that is, when $n=1$, the global Torelli theorem implies that two primitively polarized $K3$ surfaces $(X, h)$ and $(X', h')$  are deformation equivalent if and only $h^2=h'^2$. %\francois{reference}. 
The situation is different in higher dimension. 

%\begin{definition}
%Let $\Lambda$ be a free $\Z$-module of finite rank endowed with a symmetric bilinear form. If $h$ is an element of $\Lambda$, the \emph{divisibility} of $h$ is the nonnegative integer $t$ such that 
%$$h.\Lambda=t\Z.$$
%
%If $(X, h)$ is a polarized irreducible holomorphic symplectic variety, then the divisibility of $h$ is its divisibility as an element of the lattice $H^2(X, \Z)$ endowed with the Beauville-Bogomolov form.
%\end{definition}
%
Let $X$ be a variety of $K3^{[n]}$-type. If $n=1$, the lattice $H^2(X, \Z)$ is unimodular, so that the divisibility of any nonzero primitive element of $H^2(X, \Z)$ is $1$. This is no longer the case as soon as $n>1$.

By (\ref{eq:nonlin}) we have the following map of sets 
\begin{equation}\label{eq:dualset}
 H^2(X, \Z) \to  H^2(X, \Q),\ \ h\mapsto \frac{1}{\div(h)} h.
\end{equation}

Let $(X, h)$ be a primitively polarized irreducible holomorphic symplectic variety. Both $h^2$ and the divisibility of $h$ are constant along deformations of $(X, h)$. However, as shown in \cite{Apo}, it is not true that these two invariants determine the deformation type of $(X, h)$. In this section, we will give explicit representatives for all the deformation-equivalence classes of primitively polarized varieties of $K3^{[n]}$-type. 

\bigskip

We start by describing results due to Markman on deformation-equivalence of polarized varieties of $K3^{[n]}$-type. These results rely both on the global Torelli theorem \cite{Verbitsky09} and the computation of the monodromy group of varieties of $K3^{[n]}$-type in \cite{Markman10}. Recently, Kreck and Su \cite{kreck} provided a counterexample to some
of the statements contained in Verbitsky's global Torelli theorem.
However, this does not affect the results we are using, as they rely
on Markman's formulation of the global Torelli theorem using marked
moduli spaces instead of the Teichmuller space used by Verbitsky. We
refer the readers to \cite[Theorem 3.1 and Remark 3.3]{Looijenga} for the correct
statement and a comment about the difference between the Teichmuller
space and marked moduli spaces with respect to the global Torelli
theorem (see also \cite{Verbitsky-erratum}).

Let $S$ be a $K3$ surface, and $n>1$ an integer. Let $\widetilde\Lambda$ be the Mukai lattice of $S$
$$\widetilde \Lambda=H^0(S, \Z)\oplus H^2(S, \Z)\oplus H^4(S, \Z)$$
endowed with the quadratic form defined by 
$$\langle (a, b, c), (a', b', c')\rangle=bb'-ac'-a'c.$$
Let $v_n=(1, 0, 1-n).$ We identify $H^2(S^{[n]}, \Z)$ endowed with the Beauville-Bogomolov quadratic form with the orthogonal of $v_n$ in $\widetilde \Lambda$. The inclusion 
$$H^2(S, \Z)\hookrightarrow (v_n)^{\perp}$$
is compatible with the decomposition (\ref{H2}).

If $h$ is any class in $H^2(S^{[n]}, \Z)\subset \widetilde\Lambda$, let $T_S(h)$ be the saturation in $\widetilde\Lambda$ of the lattice spanned by $h$ and $v_n$. %\francois{Maybe not $h$ because it can be any class ? Same in the next proposition -- perhaps $b$ ?}

\begin{prop}\label{proposition:deformation-criterion}
Let $S$ and $S'$ be two $K3$ surfaces, and let $n>1$ be an integer. Let $h$ (resp. $h'$) be the numerical equivalence class of a big line bundle on $S^{[n]}$ (resp. $S'^{[n]}$). The pairs $(S^{[n]}, h)$ and $(S'^{[n]}, h')$ are deformation equivalent if and only if there exists an isometry
$$T_{S}(h)\ra T_{{S'}}(h')$$
mapping $h$ to $h'$.
\end{prop}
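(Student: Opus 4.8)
The plan is to translate the polarized deformation statement into a statement about parallel-transport operators, and then into elementary lattice theory inside the Mukai lattice $\widetilde\Lambda$. By Remark \ref{remark:projective-def-classes} (Markman, \cite[Proposition 7.1]{Markman11}) every polarized pair of $K3^{[n]}$-type is deformation equivalent to one of the form $(S^{[n]},h)$, so it suffices to compare two such pairs. First I would record the purely Hodge-theoretic reduction: using surjectivity of the period map together with the global Torelli theorem in Markman's marked-moduli-space formulation \cite{Verbitsky09} (in the corrected form of \cite[Theorem 3.1 and Remark 3.3]{Looijenga}), two pairs $(S^{[n]},h)$ and $(S'^{[n]},h')$ are deformation equivalent if and only if there is a \emph{parallel-transport operator} $g:H^2(S^{[n]},\mathbb Z)\to H^2(S'^{[n]},\mathbb Z)$ with $g(h)=h'$. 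One direction is immediate (transport along a connecting family); for the converse one uses connectedness of the relevant period domain and the fact that a class of positive square in the image of the period map is realized by an honest polarized variety, ampleness of $h'$ being preserved because $g$ carries the positive (Kähler) cone to the positive cone.

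Next I would invoke Markman's monodromy computation \cite{Markman10, Markman11}. Via the identification $H^2(S^{[n]},\mathbb Z)=v_n^\perp\subset\widetilde\Lambda$, an isometry $g$ is a parallel-transport operator precisely when it is orientation-preserving on the positive cone and extends to $\widetilde g\in O(\widetilde\Lambda)$ with $\widetilde g(v_n)=\pm v_n$. It is crucial here that \emph{both} signs occur: the operators acting as $+\mathrm{id}$ on the discriminant group $A\cong\mathbb Z/(2n-2)$ extend fixing $v_n$, while those acting as $-\mathrm{id}$ (which also lie in $\mathrm{Mon}^2$) extend with $v_n\mapsto -v_n$. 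This is exactly the phenomenon behind Apostolov's observation \cite{Apo} that $h^2$ and $\mathrm{div}(h)$ do not determine the deformation type, whereas the refined datum $(h^2,\pm[h/\mathrm{div}(h)])$ encoded by $T_S(h)$ does.

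The lattice reduction then goes as follows. For the ``only if'' direction, given $\widetilde g$ as above with $\widetilde g(v_n)=\pm v_n$ and $\widetilde g(h)=h'$, its restriction to $T_S(h)=\mathrm{sat}\langle v_n,h\rangle$ lands in $T_{S'}(h')$ and sends $h\mapsto h'$. For the ``if'' direction, suppose $\psi:T_S(h)\to T_{S'}(h')$ is an isometry with $\psi(h)=h'$; since $v_n$ is primitive and $h\perp v_n$, necessarily $\psi(v_n)=\pm v_n$. Because $h^2>0$ and $v_n^2=2n-2>0$ with $h\perp v_n$, the rank-$2$ lattice $T_S(h)$ is positive definite, so its orthogonal complement $K=T_S(h)^\perp$ in the even unimodular lattice $\widetilde\Lambda$ of signature $(4,20)$ has signature $(2,20)$; in particular $K$ is indefinite of large rank relative to the length of its discriminant group, and $K\subset v_n^\perp$. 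By Nikulin's theory of discriminant forms (or Eichler's criterion) the isometry $\psi$ extends to some $\widetilde g\in O(\widetilde\Lambda)$, and the freedom in $\widetilde g|_K$ lets me flip the two positive directions of $K$ so as to make $\widetilde g$ preserve the orientation of the positive cone. Restricting $\widetilde g$ to $v_n^\perp$ then yields a parallel-transport operator carrying $h$ to $h'$, hence deformation equivalence.

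The step I expect to demand the most care is precisely the extension-and-orientation bookkeeping of the last paragraph: checking that every isometry of the definite rank-$2$ sublattice $T_S(h)$ extends to $\widetilde\Lambda$ (verifying the hypotheses of the Nikulin/Eichler extension results for $K$), and confirming that the two admissible values $\psi(v_n)=\pm v_n$ correspond exactly to the $\pm\mathrm{id}$ action on $A$ permitted by Markman, so that demanding only $\psi(h)=h'$ — rather than also $\psi(v_n)=v_n$ — is the correct and complete hypothesis. Since $K$ sits inside $v_n^\perp$ and supplies two of the three positive directions of $H^2$, the orientation on $H^2$ can always be corrected through $K$ irrespective of the sign on $v_n$; thus the sign ambiguity is harmless, and the only genuine invariant is the isometry class of the marked lattice $(T_S(h),h)$, which is what the Proposition asserts.
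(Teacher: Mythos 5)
Your lattice-theoretic core (paragraphs two and three) is essentially a reconstruction of the proof of the \emph{ample} case, i.e.\ of Markman's result as written up in \cite[Proposition 1.6]{Apo}, which the paper simply cites: the identification of parallel-transport operators as orientation-preserving isometries extending to the Mukai lattice with $\widetilde g(v_n)=\pm v_n$, the observation that $\psi(h)=h'$ forces $\psi(v_n)=\pm v_n$, and the Nikulin-type extension through the signature $(2,20)$ complement are all correct in outline (though you still owe the uniqueness statement $K\cong K'$, the surjectivity of $O(K)\to O(q_K)$, and an explicit orientation-correcting isometry of $K$ acting trivially on the discriminant group). Up to these standard details, that part is a legitimate alternative to citing \cite{Apo}.

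The genuine gap is that the Proposition is stated for \emph{big} line bundles, and your argument never engages with this. The equivalence you assert in the first paragraph --- deformation equivalence of the pairs if and only if there is a parallel-transport operator with $g(h)=h'$ --- is Markman's polarized parallel-transport theorem only when $h$ and $h'$ are \emph{ample}; for merely big classes it is essentially the statement to be proved, so invoking it is circular. Moreover your justification for it is false: a parallel-transport operator preserves the positive cone, but the ample cone is a proper subcone of the positive cone (cut out by walls), and parallel transport does not preserve ampleness --- this failure is precisely why the global Torelli theorem is needed even in the ample case, and why the big case needs a separate argument. The paper fills this gap as follows: deform $(S^{[n]},h)$ and $(S'^{[n]},h')$ along the Hodge loci of $h$ and $h'$ to nearby pairs $(X,h)$ and $(X',h')$ of Picard number one, where by a theorem of Huybrechts \cite{Huybrechts99} the classes become ample; observe that the formation of the rank-two lattice $T_X(h)$ (Markman, \cite[Corollary 9.5]{Markman11}) is compatible with parallel transport, so the given isometry $T_S(h)\to T_{S'}(h')$ induces an isometry $T_X(h)\to T_{X'}(h')$ sending $h$ to $h'$; then apply the ample case (\cite[Proposition 1.6]{Apo}) to $(X,h)$ and $(X',h')$. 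Without this reduction, or some substitute for it, your proof only establishes the Proposition under the stronger hypothesis that $h$ and $h'$ are ample.
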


\begin{proof}
In case $h$ and $h'$ are ample, this is the result of Markman written up in \cite[Proposition 1.6]{Apo}.

In the general case, choose small deformations $(X, h)$ and $(X', h')$ of $(S^{[n]}, h)$ and $(S'^{[n]}, h')$ respectively such that both $X$ and $X'$ have Picard number $1$. By a theorem of Huybrechts \cite{Huybrechts99}, $h$ and $h'$ are ample classes on $X$ and $X'$ respectively. 

The construction of the rank $2$ lattices $T_S(h)$ and $T_{S'}(h')$ generalizes to $X$ and $X'$ to provide rank $2$ lattices $T_X(h)$ and $T_{X'}(h')$. This follows from the work of Markman as in \cite{Markman11}, Corollary 9.5. We refer to \cite{Markman11} and the discussion in \cite{Apo}, section 1 for the precise construction. 

The formation of $T_X(h)$ is compatible with parallel transport. As a consequence, the isomorphism $T_S(h)\ra T_{S'}(h')$ mapping $h$ to $h'$ induces an isomorphism $T_X(h)\ra T_{X'}(h')$ mapping $h$ to $h'$. It follows once again from \cite[Proposition 1.6]{Apo}, that $(X, h)$ and $(X', h')$ are deformation equivalent.
\end{proof}

We now state the main result of this section.
%%%Da qui 

\begin{thm}\label{thm:control-of-polarization}
Let $n>1$ be an integer and let $(X, h)$ be a primitively polarized irreducible holomorphic symplectic variety of $K3^{[n]}$-type. Let $t$ be the divisibility of $h$, and let $I\subset \Z$ be a system of representatives of $\Z/t\Z$, up to the action of $-1$ on $\Z/t\Z$. Then $t$ divides $2n-2$ and there exists a $K3$ surface $S$, a primitive polarization $h_S$ on $S$ and an integer $\mu\in I$ such that the pair $(X, h)$ is deformation equivalent to $(S^{[n]}, th_S-\mu\delta_n)$.
\end{thm}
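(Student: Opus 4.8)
The plan is to reduce to a Hilbert scheme, read the divisibility and the discriminant class of $h$ off the decomposition (\ref{H2}), and match these invariants against a standard model via Proposition \ref{proposition:deformation-criterion}. First I would invoke Remark \ref{remark:projective-def-classes} to replace $(X,h)$ by a deformation equivalent pair $(S^{[n]},h)$, so that $h\in H^2(S^{[n]},\Z)=H^2(S,\Z)\oplus_{\perp}\Z\delta_n$. Write $h=c\beta_0-a\delta_n$ with $\beta_0\in H^2(S,\Z)$ primitive, $c\ge 1$ and $a\in\Z$; since $\beta_0$ is primitive and $\delta_n$ generates an orthogonal summand, primitivity of $h$ is equivalent to $\gcd(c,a)=1$. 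As $H^2(S,\Z)$ is unimodular, for $w=\gamma+b\delta_n$ we have $h\cdot w=c(\beta_0\cdot\gamma)+2ab(n-1)$, and letting $\gamma,b$ vary this set of values is $\gcd(c,2a(n-1))\Z$. Hence $t:=\div(h)=\gcd(c,2a(n-1))=\gcd(c,2(n-1))$, the last equality because $\gcd(c,a)=1$. In particular $t\mid 2n-2$, which is the first assertion.

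Next I would compute the class of $h$ under (\ref{eq:nonlinclass}) in the discriminant group $\Lambda^\vee/\Lambda$, where $\Lambda=H^2(S^{[n]},\Z)$. Unimodularity of $H^2(S,\Z)$ gives $\Lambda^\vee/\Lambda\cong\Z/(2n-2)\Z$, generated by the class of $\tfrac{1}{2(n-1)}\delta_n$. Since $t\mid c$, the $H^2(S,\Z)$-component $\tfrac{c}{t}\beta_0$ of $h/t$ is integral, so $[h/t]$ depends only on $-\tfrac{a}{t}\delta_n$; writing $s:=(2n-2)/t$ one finds $[h/t]=-sa\in\Z/(2n-2)\Z$. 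Because $\gcd(a,t)=1$ this class has order exactly $t$, and it is determined, up to the sign built into (\ref{eq:nonlinclass}), by the residue $\pm a\bmod t$. This is precisely the datum recorded by $I$: I would choose $\mu\in I$ with $\mu\equiv\pm a\pmod t$, whence also $\gcd(\mu,t)=1$.

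I would then build the standard model. Set $e:=\tfrac{1}{t^2}\big(h^2+2\mu^2(n-1)\big)$, the unique value making $t^2e-2\mu^2(n-1)=h^2$. This $e$ is a positive even integer: positivity holds since $h^2>0$, while writing $\mu=\pm a+tk$ one computes $\tfrac{2(n-1)(\mu^2-a^2)}{t^2}=\pm 2ask+2(n-1)k^2\in 2\Z$, and $\tfrac{c^2}{t^2}\beta_0^2\in 2\Z$ because $t\mid c$ and $H^2(S,\Z)$ is even, so that $e=\tfrac{c^2}{t^2}\beta_0^2+\tfrac{2(n-1)(\mu^2-a^2)}{t^2}\in 2\Z_{>0}$. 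Choosing a $K3$ surface $S'$ with a primitive polarization $h_S$ of degree $h_S^2=e$ (every positive even degree occurs), the class $h':=th_S-\mu\delta_n$ is primitive (as $\gcd(t,\mu)=1$), has divisibility $\gcd(t,2(n-1))=t$, satisfies $h'^2=h^2$, and has discriminant class $[h'/t]=-s\mu=\pm[h/t]$ by the choice of $\mu$.

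Finally I would conclude with Proposition \ref{proposition:deformation-criterion}: it suffices to produce an isometry $T_S(h)\to T_{S'}(h')$ carrying $h$ to $h'$. I expect this to be the main obstacle, being the only point where the matching of numerical invariants must be promoted to an actual isometry. The mechanism is Eichler's criterion: since $\Lambda$ contains $U^{\oplus 2}$, the orbit of a primitive vector under the stable orthogonal group is determined by its square and its class in $\Lambda^\vee/\Lambda$, so $h$ and $h'$ — having equal square and discriminant class — lie in one such orbit; the residual sign in $[h/t]=\pm[h'/t]$ is absorbed either by the freedom in the abstract isometry of the rank two lattices $T_S(h),T_{S'}(h')$ or, equivalently, by Markman's computation \cite{Markman10} that the monodromy acts on $\Lambda^\vee/\Lambda$ through $\pm\mathrm{Id}$. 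This produces the desired isometry, and Proposition \ref{proposition:deformation-criterion} then gives that $(X,h)$ is deformation equivalent to $(S'^{[n]},th_S-\mu\delta_n)$, completing the proof.
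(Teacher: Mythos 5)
Your proposal is correct in substance, and its numerical core coincides exactly with the paper's: both reduce to $(S^{[n]},h)$ via Remark \ref{remark:projective-def-classes}, both compute $\div(h)=\gcd(\lambda,2n-2)$ (your $c$ is the paper's $\lambda$, your $a$ its $\mu$), both choose the representative $\mu\in I$ with $\mu\equiv\pm a\pmod t$, and your parity argument showing $e=\tfrac{1}{t^2}\bigl(h^2+2\mu^2(n-1)\bigr)\in 2\Z_{>0}$ is precisely the paper's divisibility-by-$2t^2$ computation defining $2d'$. Where you genuinely diverge is the lattice-theoretic engine for the final step. The paper never leaves the Mukai lattice: it identifies the gluing data of the overlattice $T_S(h)\supset \Z h\oplus\Z v_n$ (the generator $w=\tfrac{1}{t}(h-\mu v_n)$, mapped to $(2d/t,\mu(2n-2)/t)$ in the discriminant group of $\langle 2d\rangle\oplus\langle 2n-2\rangle$) and applies Nikulin's \cite[Proposition 1.4.2]{Nikulin80} to produce directly the rank-two isometry $T_S(h)\to T_{S'}(h')$ demanded by Proposition \ref{proposition:deformation-criterion}. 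You instead read off the discriminant class of $h$ in $H^2(S^{[n]},\Z)^\vee/H^2(S^{[n]},\Z)\cong\Z/(2n-2)\Z$ and invoke Eichler's criterion on the full rank-$23$ lattice together with Markman's computation of the monodromy. Your route is more conceptual (monodromy orbit $=$ square plus discriminant class up to sign, which is also the language of Remarks \ref{rmk:precisestate} and \ref{rmk:nozal} later in the paper); the paper's route is a self-contained rank-two computation requiring no appeal to Eichler.

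One hand-off in your last paragraph must be made precise, since as written it is the only real gap. Eichler's criterion produces an isometry $g$ of $\Lambda=H^2(S^{[n]},\Z)$ with $g(h)=h'$ acting by $\pm\mathrm{Id}$ on $\Lambda^\vee/\Lambda$; but Proposition \ref{proposition:deformation-criterion} asks for an isometry of the rank-two lattices $T_S(h)\to T_{S'}(h')$, which live in the Mukai lattice $\widetilde\Lambda$, not in $\Lambda$. The bridge is the extension property: an isometry of $v_n^\perp$ acting by $\pm\mathrm{Id}$ on the discriminant group extends to an isometry of $\widetilde\Lambda$ sending $v_n$ to $\pm v_n$ (Nikulin \cite{Nikulin80}; this is also how Markman passes between $H^2$ and the Mukai lattice in \cite{Markman11}), and such an extension carries the saturation of $\Z h\oplus \Z v_n$ onto the saturation of $\Z h'\oplus\Z v_n$, i.e. gives exactly the required map $T_S(h)\to T_{S'}(h')$ with $h\mapsto h'$. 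Alternatively you may bypass Proposition \ref{proposition:deformation-criterion} entirely and quote Markman's parallel-transport characterization of polarized deformation equivalence, but then you must address positivity of $h'=th_S-\mu\delta_n$ (it is big but not ample in general), which the paper handles in the remark following the theorem and in the Picard-rank-one reduction inside the proof of Proposition \ref{proposition:deformation-criterion}, and which your write-up does not mention. With either repair inserted, your argument is complete.
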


\begin{rmk}
{\em The class $th_S-\mu\delta_n$ is not ample in general. However, the argument of the proof of Proposition \ref{proposition:deformation-criterion} shows that it is big.
}
\end{rmk}
%%%% a qui

%%%%%%% togliere Da qui ...

\begin{proof}
Remark \ref{remark:projective-def-classes} allows us to assume that $X=S^{[n]}$ for some $K3$ surface $S$. Let $\widetilde\Lambda=H^0(S, \Z)\oplus H^2(S, \Z)\oplus H^4(S, \Z)$ be the Mukai lattice of $S$ and let $v_n=(1, 0, 1-n).$ 

Write $2d$ and $t$ for the Beauville-Bogomolov square and the divisibility of $h$ respectively. We start by describing the structure of the lattice $T_{S}(h)$. 

Write 
$$h=(\mu, \lambda h_S, \mu (n-1))=\lambda h_S -\mu\delta_n$$
where $\lambda$ and $\mu$ are two integers and $h_S$ is primitive. Since $h$ is primitive, $\lambda$ and $\mu$ are relatively prime. It is readily checked that the divisibility of $h$ is 
$$t=\gcd(\lambda, 2n-2)$$
and that $\mu$ and $t$ are relatively prime.

The element 
\begin{equation}\label{generator-overlattice}
w=\frac{1}{t}h-\frac{\mu}{t}v_n=\Big(0, \frac{\lambda}{t}h_S, \frac{\mu(2n-2)}{t}\Big)\in \widetilde\Lambda
\end{equation}
belongs to $T_{S}(h)$, and the computation of \cite[Proposition 2.2]{Apo} show that $w$ generates the group $T_{S}(h)/(\Z h\oplus\Z v_n)\simeq \Z/t\Z.$

The lattice $N$ spanned by $h$ and $v_n$ in $\widetilde\Lambda$ is isomorphic to $\langle 2d\rangle\oplus\langle 2n-2\rangle$. Its discriminant group 
$N^\vee/N$ is $\Z/2d\Z\oplus \Z/(2n-2)\Z$. 

As in paragraph 4 of \cite{Nikulin80} and under the identifications above, the inclusion 
$$\langle 2d\rangle\oplus\langle 2n-2\rangle\simeq \Z h\oplus\Z v_n\subset T_{S}(h)$$
induces an injective morphism
$$\phi : \Z/t\Z\simeq T_{S}(h)/(\Z h\oplus\Z v_n)\hookrightarrow \Z/2d\Z\oplus \Z/(2n-2)\Z.$$
By (\ref{generator-overlattice}), $\phi$ sends $1$ to $(2d/t, \mu(2n-2)/t)$.

\bigskip

Let $S'$ be a $K3$ surface with a primitive polarization $h_{S'}$. Let $\mu'$ be an arbitrary integer, and define 
$$h'=th_{S'}-\mu'\delta_n\in H^2(S'^{[n]}, \Z)=H^2(S', \Z)\oplus\Z\delta_n.$$
By \cite[Proposition 1.4.2]{Nikulin80} and the discussion above, a sufficient condition ensuring that there exists an isomorphism of lattices 
$T_{S}(h)\ra T_{S'}(h')$ sending $h$ to $h'$ is that 
$$t^2h_{S'}^2-\mu'^2(2n-2)=2d$$
and 
$$\frac{\mu'(2n-2)}{t}=\pm\frac{\mu(2n-2)}{t} \mod 2n-2,$$
i.e.
$$\mu'=\pm\mu \mod t.$$

Now let $\mu'$ be an integer in $I$ such that 
$$\mu'=\pm \mu \mod t.$$
Then $\mu'$ is prime to $t$ since $\mu$ is. Furthermore, $\lambda$ is divisible by $t$ and $h_S^2$ is even, so that $\lambda^2h_S^2$ is divisible by $2t^2$. Also, $(\mu^2-\mu'^2)(2n-2)$ is divisible by $2t^2$: both $(\mu^2-\mu'^2)$ and $2n-2$ are divisible by $t$, and at least one of these terms is divisible by $2t$, depending on the parity of $t$. As a consequence, the integer 
$$2d+\mu'^2(2n-2)=\lambda^2h_S^2-(\mu^2-\mu'^2)(2n-2)$$ 
is divisible by $2t^2$. Write 
$$2d=t^2.2d'-\mu'^2(2n-2),$$
and let $S'$ be a $K3$ surface with a primitive polarization $h_{S'}$ of degree $2d'$. 

By construction, there exists an isomorphism of lattices $T_{S}(h)\ra T_{S'}(h')$ sending $h$ to $h'$. Proposition \ref{proposition:deformation-criterion} shows that $(S^{[n]}, h)$ and $(S'^{[n]}, th_{S'}-\mu'\delta_n)$ are deformation equivalent.
\end{proof}

The following is an immediate consequence of the theorem.

\begin{cor}\label{cor:control-of-polarization-weak}
Let $n>1$ be an integer and let $(X, h)$ be a primitively polarized irreducible holomorphic symplectic variety of $K3^{[n]}$-type. Let $I\subset\Z$ be a system of representatives of $\Z/(2n-2)\Z$, up to the action of $-1$ on $\Z/(2n-2)\Z$. Then there exists a positive integer $m$, a $K3$ surface $S$, a primitive polarization $h_S$ on $S$ and an integer $\mu\in I$ such that the pair $(X, mh)$ is deformation equivalent to $(S^{[n]}, (2n-2)h_S-\mu\delta_n)$.
\end{cor}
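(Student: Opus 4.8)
The plan is to reduce to Theorem \ref{thm:control-of-polarization} by passing to a suitable power of the polarization, and then to re-normalise the resulting class. First I would apply Theorem \ref{thm:control-of-polarization} to the primitively polarized pair $(X,h)$: writing $t:=\div(h)$, it produces a $K3$ surface $S$, a primitive polarization $h_S$, and an integer $\mu'$ such that $t\mid 2n-2$ and $(X,h)$ is deformation equivalent to $(S^{[n]},h')$ with $h'=th_S-\mu'\delta_n$. Set $m:=(2n-2)/t$, a positive integer. Since a deformation equivalence of pairs is realised by a family $\pi:\mathcal X\to B$ together with a line bundle $\mathcal L$, and replacing $\mathcal L$ by $\mathcal L^{\otimes m}$ multiplies every fibrewise numerical class by $m$, it remains a deformation equivalence after this replacement; hence
$$(X,mh)\ \sim\ (S^{[n]},mh')=(S^{[n]},(2n-2)h_S-m\mu'\delta_n).$$
It therefore only remains to normalise the coefficient $m\mu'$ of $\delta_n$ modulo $2n-2$ and up to sign.

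Next I would choose the unique $\mu\in I$ with $\mu\equiv\pm m\mu'\pmod{2n-2}$, which exists by definition of $I$, and seek a $K3$ surface $S'$ with a primitive polarization $h_{S'}$ such that $(S'^{[n]},(2n-2)h_{S'}-\mu\delta_n)$ is deformation equivalent to $(S^{[n]},(2n-2)h_S-m\mu'\delta_n)$. The degree of $h_{S'}$ is forced by the equality of Beauville--Bogomolov squares: using $\delta_n^2=-(2n-2)$ one is led to
$$h_{S'}^2=h_S^2+\frac{\mu^2-(m\mu')^2}{2n-2}.$$
I would then check the right-hand side is a positive even integer. Integrality follows from $\mu\equiv\pm m\mu'\pmod{2n-2}$, which gives $\mu^2\equiv (m\mu')^2\pmod{2n-2}$; positivity follows from $q(h)>0$ (so the equivalent expression $\tfrac{m^2 q(h)+\mu^2(2n-2)}{(2n-2)^2}$ is positive); and evenness is the parity computation already present in the proof of Theorem \ref{thm:control-of-polarization}: since $2n-2$ is even, the two factors $\mu\mp m\mu'$ are both even, so $\mu^2-(m\mu')^2$ is divisible by $2(2n-2)$. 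A $K3$ surface $S'$ with a primitive polarization of this degree then exists.

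Finally I would deduce the deformation equivalence of $(S^{[n]},(2n-2)h_S-m\mu'\delta_n)$ and $(S'^{[n]},(2n-2)h_{S'}-\mu\delta_n)$ by running the lattice-theoretic argument of the proof of Theorem \ref{thm:control-of-polarization}, now with $2n-2$ in the role of $t$. Here I invoke Proposition \ref{proposition:deformation-criterion}, which applies to big classes whether or not they are primitive; since $(2n-2)h_S-m\mu'\delta_n=mh'$, the rank-two overlattice is unchanged by the scaling ($T_S(mh')=T_S(h')$), and by \cite[Proposition 1.4.2]{Nikulin80} an isometry carrying the distinguished class of one side to that of the other exists once the squares agree and the induced data in the discriminant group coincide up to sign, i.e. once $\mu\equiv\pm m\mu'\pmod{2n-2}$. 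Composing the two deformation equivalences yields $(X,mh)\sim (S'^{[n]},(2n-2)h_{S'}-\mu\delta_n)$ with $\mu\in I$, as required.

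I expect the main obstacle to be precisely this last step: the class $mh'=(2n-2)h_S-m\mu'\delta_n$ is no longer primitive, so Theorem \ref{thm:control-of-polarization} cannot be quoted verbatim, and one must verify that Proposition \ref{proposition:deformation-criterion} together with the Nikulin computation still produces an isometry sending the \emph{non-primitive} distinguished vector to its counterpart. The accompanying integrality, parity and positivity checks for $h_{S'}^2$ are routine, but it is the matching of the distinguished vectors (rather than merely of the rank-two lattices) that requires care.
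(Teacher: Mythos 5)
Your proposal is correct in substance, but it takes a longer route than the paper, which offers no computation at all: the Corollary is stated there as an \emph{immediate} consequence of Theorem \ref{thm:control-of-polarization}. The quick argument is to exploit the freedom in the theorem's choice of representatives (cf.\ Remark \ref{rmk:traslo}): with $t=\div(h)$ and $m=(2n-2)/t$, every class of $\Z/(2n-2)\Z$ lying in the subgroup $m\Z/(2n-2)\Z$ has its representative in $I$ divisible by $m$ as an integer, so $I_t:=\{\mu/m:\ \mu\in I,\ [\mu]\in m\Z/(2n-2)\Z\}$ is a system of representatives of $\Z/t\Z$ up to sign satisfying $mI_t\subset I$; applying the theorem with this $I_t$ and then replacing $\mathcal L$ by $\mathcal L^{\otimes m}$ in the deformation finishes the proof with no second deformation, no auxiliary surface $S'$, and no new lattice computation. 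You instead apply the theorem with an arbitrary $I_t$, scale, and then repair the $\delta_n$-coefficient by a further deformation equivalence, which forces you to re-run the Nikulin argument for the non-primitive class $mh'$ -- precisely the step you flag as delicate. That step does close, and more cheaply than you suggest: since $th_S-\mu'\delta_n$ is primitive (parallel transport preserves primitivity), $\gcd(\mu',t)=1$, and then $\mu\equiv\pm m\mu'\ \mathrm{mod}\ 2n-2$ forces $\gcd(\mu,2n-2)=m$; hence $(2n-2)h_{S'}-\mu\delta_n=m\bigl(th_{S'}-(\mu/m)\delta_n\bigr)$ with $\mu/m\equiv\pm\mu'\ \mathrm{mod}\ t$, and the isometry of $T_S(h')\to T_{S'}(th_{S'}-(\mu/m)\delta_n)$ produced by the theorem's own proof for \emph{primitive} vectors automatically carries $mh'$ to its counterpart, so no genuinely new case of the Nikulin criterion is needed. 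Your integrality, parity and positivity checks for $h_{S'}^2$ are fine (though the parity claim should be phrased as: one factor of $\mu^2-(m\mu')^2$ is divisible by $2n-2$ and the other is even, giving divisibility by $2(2n-2)$). In short: your route is valid once the non-primitive matching is reduced to the primitive case, but the compatible choice of representatives makes the whole detour unnecessary.
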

We will use the dual statement on curve classes. 

\begin{cor}\label{cor:control-of-polarization-dual}
Let $n>1$ be an integer and let $X$ be a primitively polarized irreducible holomorphic symplectic variety of $K3^{[n]}$-type. Let $C\in H_2(X,\Z)$ be a primitive curve class with positive square. 
 %Let $I\subset\Z$ be a system of representatives of $\Z/(2n-2)\Z$, up to the action of $-1$ on $\Z/(2n-2)\Z$. 
 Then there exists a $K3$ surface $S$, a primitive polarization $h_S$ on $S$ and an integer $\mu\in [0, n-1)$ such that there exists a polarized deformation
  from $X$ to $S^{[n]}$, carrying $C$ to $h_S-\mu r_n$.
\end{cor}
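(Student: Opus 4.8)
The plan is to deduce the statement from Theorem \ref{thm:control-of-polarization} by passing through the Beauville--Bogomolov duality between divisor and curve classes. First I would attach to the primitive curve class $C$ a primitive cohomology class. Under the identification $H_2(X,\Z)=H^2(X,\Z)^\vee\hookrightarrow H^2(X,\Q)$ coming from (\ref{eq:embdual}), the discussion following (\ref{eq:nonlin}) shows that every primitive element of $H_2(X,\Z)$ has the form $\frac{h}{\div(h)}$ for a unique (up to sign) primitive $h\in H^2(X,\Z)$. So I write $C=\frac1t h$ with $t:=\div(h)$; as elements of $H^2(X,\Q)$ this gives $q(h)=t^2q(C)>0$.

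Next I would turn $h$ into a polarization. Since $q(h)>0$, I fix the sign of $h$ so that it lies in the component of the positive cone containing the ample classes (equivalently, the sign for which $C$ pairs positively with ample divisors), and I deform $X$ to a variety of Picard number one along which $h$ stays a Hodge class. By Huybrechts' theorem \cite{Huybrechts99}, $h$ is then ample, so $(X,h)$ is deformation equivalent to a primitively polarized variety of $K3^{[n]}$-type with the same lattice-theoretic invariants, and $C$ is carried along this deformation to $\frac1t h$.

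Then I apply Theorem \ref{thm:control-of-polarization} to $(X,h)$: it yields $t\mid 2n-2$ together with a $K3$ surface $S$, a primitive polarization $h_S$ on $S$, and an integer $\mu'$ with $\gcd(\mu',t)=1$ lying in a system of representatives of $\Z/t\Z$ up to sign, such that $(X,h)$ is deformation equivalent to $(S^{[n]},\,th_S-\mu'\delta_n)$. Transporting $C=\frac1t h$ along this deformation and using $\delta_n=2(n-1)r_n$ gives
\begin{equation*}
C\longmapsto \frac{th_S-\mu'\delta_n}{t}=h_S-\frac{\mu'(2n-2)}{t}\,r_n=h_S-\mu\,r_n,\qquad \mu:=\frac{\mu'(2n-2)}{t}\in\Z .
\end{equation*}
The coefficient of $h_S$ comes out to be exactly $1$ precisely because the representative produced by Theorem \ref{thm:control-of-polarization} carries $\div(h)=t$ in front of $h_S$; hence $h_S$ is an honest primitive polarization on $S$, as desired.

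The last task is to pin down the range of $\mu$, which I expect to be the only delicate point. Taking $\mu'\in\{0,1,\dots,\lfloor t/2\rfloor\}$ one immediately gets $0\le \mu\le n-1$, and the identity $\mu=\frac{\mu'(2n-2)}{t}$ combined with $\gcd(\mu',t)=1$ shows that the extremal value $\mu=n-1$ forces $2\mu'=t$, hence $\mu'=1$ and $t=2$; away from this boundary case $\mu$ lands in $[0,n-1)$. I therefore expect the careful matching of the divisibility $t$ of $h$ in $H^2(X,\Z)$ with the $r_n$-coefficient of $C$ through the factor $\frac{2n-2}{t}$, and the resulting endpoint analysis, to be the main thing to get right; everything else is a formal transcription of the divisor-class statement.
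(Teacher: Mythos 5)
Your argument is essentially the paper's own proof. The paper also writes $C=h/\div(h)$ with $h$ primitive and positive, reduces to the case $h$ ample exactly as you do (deforming to Picard rank one and invoking Huybrechts), applies the deformation classification of polarized pairs, and obtains $h_S-\mu r_n$ by transporting and dividing by the divisibility, using $\delta_n=(2n-2)r_n$. The only cosmetic difference is the routing: the paper passes through Corollary \ref{cor:control-of-polarization-weak}, i.e.\ it scales $h$ by $m=(2n-2)/t$ so that the transported class is $(2n-2)h_S-\mu\delta_n$, of divisibility $2n-2$, and then divides by $2n-2$; you apply Theorem \ref{thm:control-of-polarization} directly to $h$ and divide by $t$. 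The two computations coincide, since $\frac{1}{t}\left(th_S-\mu'\delta_n\right)=h_S-\frac{\mu'(2n-2)}{t}r_n$ is exactly the class the paper obtains.

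The endpoint you flag, and correctly refuse to sweep away, is a genuine issue --- but it is an issue with the statement (and with the paper's own proof), not with your argument. A system of representatives of $\Z/(2n-2)\Z$ up to sign has $n$ elements, namely $\{0,1,\dots,n-1\}$, so Corollary \ref{cor:control-of-polarization-weak}, which the paper cites, can only place $\mu$ in the \emph{closed} interval $[0,n-1]$; the paper's proof silently writes $[0,n-1)$ and thereby commits exactly the off-by-one you isolate. Moreover the boundary case cannot be removed: when $t=2$, $\mu'=1$ one gets $\mu=n-1$, and since by Markman the image of $C$ in the discriminant group $H_2(X,\Z)/H^2(X,\Z)\simeq\Z/(2n-2)\Z$ is, up to sign, a deformation invariant (this is the criterion used in Remark \ref{rmk:nozal}), while $[(n-1)r_n]\neq\pm[\mu r_n]$ for $0\leq\mu\leq n-2$, no further deformation brings $\mu$ into $[0,n-1)$. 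Concretely, for $n=2$ the class $h=2h_S-\delta_2$ has divisibility $2$ and $C=h/2=h_S-r_2$, forcing $\mu=1=n-1$. So your proof, like the paper's, establishes the corollary with $[0,n-1]$ in place of $[0,n-1)$; the half-open interval in the statement is a slip, consistent with Remark \ref{rmk:traslo}, which makes clear that the intended range is a full system of representatives up to sign.
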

\begin{proof}
Following (\ref{eq:embdual}) and (\ref{eq:nonlin})
we write $C=h/div(h)$, for some primitive and positive $h\in H^2(X,\Z)$. As in the proof of Proposition \ref{proposition:deformation-criterion} we may assume that 
$h$ is ample. 
By Corollary \ref{cor:control-of-polarization-weak} there exists a polarized $K3$ surface $(S,h_S)$ and an integer 
$\mu\in [0, n-1)$ such that the pair $(X, mh)$ is deformation equivalent to $(S^{[n]}, (2n-2)h_S-\mu\delta_n)$, for  some integer $m>0$.
 The divisibility of $(2n-2)h_S-\mu\delta_n$ equals $2n-2$. 
Therefore, via the map (\ref{eq:dualset}), we get 
$$
(2n-2)h_S-\mu\delta_n \mapsto \frac{1}{2n-2} \big((2n-2)h_S-\mu\delta_n\big) = h_S - \mu r_n.
$$

This class yields the desired parallel transport of $C$.
\end{proof}

\begin{rmk}\label{rmk:traslo}
{\rm{The proofs of the two corollaries above work for any choice of a system of representatives of $\Z/(2n-2)\Z$, up to the action of $-1$ on $\Z/(2n-2)\Z$.
}}
\end{rmk}

\section{Deforming rational curves}

Let $\pi : \mathcal X\ra B$ be a smooth projective morphism of complex quasi-projective varieties of relative dimension $d$, and let $\alpha$ be a global section of Hodge type $(d-1,d-1)$ of the local system $R^{2d-2}\pi_*\Z$. Fixing such a section $\alpha$, we can consider the relative Kontsevich moduli stack of genus zero stable curves $\overline{{\mathcal M_0}}(\mathcal X/B, \alpha)$. We refer to \cite{BehrendManin96, FultonPandharipande95, AbramovichVistoli02} for details and constructions. 

The space $\overline{{\mathcal M_0}}(\mathcal X/B, \alpha)$ parametrizes maps $f : C\ra X$ from genus zero stable curves to fibers $X=\mathcal X_b$ of $\pi$ such that $f_*[C]=\alpha_b$. The map $\overline{{\mathcal M_0}}(\mathcal X/B, \alpha)\ra B$ is proper. If $f$ is a stable map, we denote by $[f]$ the corresponding point of the Kontsevich moduli stack.

For the remainder of this section, let $X$ be a smooth projective irreducible holomorphic symplectic variety of dimension $2n$ and let $f : C\ra X$ be a map from a stable curve $C$ of genus zero to $X$. We assume furthermore $f$ is unramified at the generic point of each irreducible component of $C$. Let $\mathcal X\ra B$ be a smooth projective morphism of smooth connected quasi-projective varieties and let $0$ be a point of $B$ such that $\mathcal X_0=X$. Let $\mathcal \alpha$ be a global section of Hodge type $(2n-1,2n-1)$ of $R^{4n-2}\pi_*\Z$ such that $\alpha_0=f_*[C]$ in $H^{4n-2}(X, \Z)$.

\begin{prop}\label{prop:defcurves}
Let $ M$ be an irreducible component of $\overline{ M_0}(X, f_*[C])$ containing $[f]$. Then the following holds:
\begin{enumerate}
\item the stack $ M$ has dimension at least $2n-2$;
\item if $ M$ has dimension $2n-2$, then any irreducible component of the Kontsevich moduli stack $\overline{{\mathcal M_0}}(\mathcal X/B, \alpha)$ that contains $M$ dominates $B$;
\end{enumerate}
\end{prop}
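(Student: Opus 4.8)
The plan is to establish (1) first and then derive (2) from it by a fibre-dimension argument.

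For (1), I would begin with the standard deformation theory of genus $0$ stable maps. Since $X$ carries a symplectic form, $c_1(T_X)=0$, so the virtual dimension of $\overline{M_0}(X,f_*[C])$ is
$$\int_{f_*[C]}c_1(T_X)+\dim X-3=2n-3,$$
and every irreducible component through $[f]$ has dimension at least this number. The whole content of (1) is to gain one extra dimension from the symplectic structure. I would produce this extra dimension via a cosection of the obstruction sheaf: contraction with $\sigma$ gives an isomorphism $f^*T_X\xrightarrow{\sim}f^*\Omega^1_X$, and composing with the transpose of $df$, namely the canonical map $f^*\Omega^1_X\to\omega_C$, yields a map $\psi:f^*T_X\to\omega_C$ that is surjective away from the ramification locus. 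Because $f^*\sigma$ vanishes on vectors tangent to $C$ (a $2$-form restricts to zero on a curve), $\psi$ annihilates $df(T_C)$; it therefore descends to the normal sheaf and, on cohomology, induces a surjection $H^1(C,f^*T_X)\to H^1(C,\omega_C)\cong\mathbb{C}$.

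The crucial step, and the one I expect to be the main obstacle, is to show that the obstructions to deforming $f$ lie in the kernel of this cosection, so that the effective obstruction space drops by one and $\dim_{[f]}M\ge (2n-3)+1=2n-2$. This is exactly the point where holomorphic symplecticity enters: the vanishing of the pairing of the obstruction against $\sigma$ is a semiregularity-type statement, equivalent in spirit to the use of Mumford's theorem on $0$-cycles (it reflects that $\sigma$ is closed and that the points of a rational curve carry no moduli of $0$-cycles). It is the higher-dimensional analogue of the passage to the reduced obstruction theory in the $K3$ case $n=1$, where $2n-2=0$ and rational curves are rigid.

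For (2), I would run the same construction relatively over $B$. Every fibre $\mathcal X_b$ is holomorphic symplectic and $\alpha$ is everywhere of type $(2n-1,2n-1)$, so the relative stack $\overline{{\mathcal M_0}}(\mathcal X/B,\alpha)$ carries a relative obstruction theory over the smooth base $B$ whose reduced virtual relative dimension is again $2n-2$. Hence at $[f]$, where the cosection is surjective, every component through $[f]$ has dimension at least $\dim B+(2n-2)$; in particular $\dim\mathcal M\ge\dim B+2n-2$ for the component $\mathcal M$ containing $M$.

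The conclusion is then formal. The fibre of $\overline{{\mathcal M_0}}(\mathcal X/B,\alpha)$ over $0$ is exactly $\overline{M_0}(X,f_*[C])$, so $M$ is an irreducible component of $\mathcal M^0:=\mathcal M\cap\overline{M_0}(X,f_*[C])$. Suppose $\mathcal M\to B$ were not dominant, with image $Z$ of dimension at most $\dim B-1$. Applying the fibre-dimension theorem to the dominant morphism $\mathcal M\to Z$, every component of $\mathcal M^0$ would have dimension at least
$$\dim\mathcal M-\dim Z\ge(\dim B+2n-2)-(\dim B-1)=2n-1,$$
contradicting $\dim M=2n-2$. Therefore $\mathcal M$ dominates $B$.
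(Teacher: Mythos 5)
Your strategy for part (1) is the reduced (cosection) obstruction theory route, and you have correctly identified where its difficulty lies --- but that is exactly the step you do not prove. The assertion that all obstructions to deforming $f$ lie in the kernel of the cosection $H^1(C,f^*T_X)\to H^1(C,\omega_C)\cong\mathbb{C}$ is the entire content of the reduced theory for symplectic targets (a cone-reduction/semiregularity theorem in the style of Kiem--Li and Maulik--Pandharipande--Thomas); calling it ``equivalent in spirit to the use of Mumford's theorem on $0$-cycles'' is not an argument, and in this paper Mumford's theorem enters only later, in Proposition \ref{prop:mumford}, to bound the codimension of loci of rationally equivalent points --- it plays no role in Proposition \ref{prop:defcurves}. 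The same unproved input is then invoked a second time, in relative form over $B$, to get $\dim\mathcal M\geq\dim B+2n-2$ in part (2). So as written, both parts of your proposal rest on an assumed theorem; the remaining ingredients (the virtual dimension count $2n-3$, the surjectivity of the cosection at a generically unramified $f$, and the fibre-dimension argument deducing dominance of $B$) are correct but routine.

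The paper produces the extra dimension by a different and more elementary mechanism that avoids semiregularity altogether. One realizes $B$ (locally) as the Noether--Lefschetz locus of the class $f_*[C]$ inside the full local universal deformation space $S$ of $X$, which is a smooth \emph{divisor}: $\dim S=\dim B+1$. By Lemma 11 of [BHT] --- this is where the hypothesis that $f$ is unramified at the generic points of $C$ is used --- one has $\mathbb{R}\mathcal{H}om(\Omega_f^\bullet,\mathcal{O}_C)\simeq N_f[-1]$ for a coherent sheaf $N_f$, and standard deformation theory over $S$ bounds every component of the space of stable maps over $S$ below by $\dim S+\chi(N_f)=\dim B+1+(2n-3)=\dim B+2n-2$. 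Hodge theory then forces the image in $S$ of any such component to lie inside the divisor $B$, so its fibres all have dimension at least $2n-2$, which is (1); and if some fibre has dimension exactly $2n-2$, the component must dominate $B$, which is (2). Thus the ``$+1$'' comes from the codimension of the Noether--Lefschetz locus in $S$, not from shrinking the obstruction space, and both statements fall out of a single dimension estimate. If you want to keep the cosection route, you must either prove the cone-reduction statement for stable maps to higher-dimensional holomorphic symplectic varieties or cite it precisely; otherwise the proposal has a genuine gap at its central step.
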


In other words, when the assumption of (2) above holds, the stable map $f : C\ra X$ deforms over a finite cover of $B$. Related results have been obtained by Ran, see for instance Example 5.2 of \cite{Ran95}. See also \cite[Theorem 4.1]{AmerikVerbitsky14} for an alternative proof.

\begin{proof}
Let $\widetilde{\mathcal X}\ra S$ be a local universal family of deformations of $X$ such that $B$ is the Noether-Lefschetz locus associated to $f_*[C]$ in $S$. In particular, $B$ is a smooth divisor in $S$. 

Lemma 11 of \cite{BHT} and our hypothesis on $f$ show that we have an isomorphism
$$\mathbb R\mathcal{H}om(\Omega_f^\bullet, \mathcal  O_C)\simeq  N_f[-1]$$
in the derived category of coherent sheaves on $C$, for some coherent sheaf $N_f$. As a consequence, standard deformation theory shows that any component of the deformation space of the stable map $f$ over $S$ has dimension at least
$$\dim{S}+H^1(\mathbb R\mathcal{H}om(\Omega_f^\bullet, \mathcal  O_C))-H^2(\mathbb R\mathcal{H}om(\Omega_f^\bullet, \mathcal O_C))=\dim{S}+\chi(N_f)=\dim{B}+2n-2,$$
the latter equality following from the Riemann-Roch theorem on $C$ and the triviality of the canonical bundle of $X$.

Since the image in $S$ of any component of the deformation space of the stable map $f$ is contained in $B$, the fibers of such a component all have dimension at least $2n-2$. If any fiber has dimension $2n-2$, it also follows that the corresponding component has to dominate $B$, which shows the result.
\end{proof}

The result above holds for any smooth projective variety $X$ with trivial canonical bundle. In order to use it, we need to study the locus spanned by a family of rational curves. The following gives a strong restriction on this locus, and makes crucial use of the symplectic form on $X$. 

\begin{prop}\label{prop:mumford}
Let $X$ be a projective manifold of dimension $2n$ endowed with a symplectic form, and let $Y$ be a closed subvariety of codimension $k$ of $X$. Let $W\subset X$ be a subvariety such that any point of $Y$ is rationally equivalent to a point of $W$. Then the codimension of $W$ is at most $2k$.
\end{prop}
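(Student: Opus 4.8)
The plan is to argue by contradiction, playing two facts about the powers of the symplectic form $\sigma\in H^0(X,\Omega^2_X)$ against each other: an elementary symplectic-linear-algebra bound that forces the power $\sigma^{n-k}$ to restrict \emph{nontrivially} to $Y$, and a Mumford-type transport principle that forces this \emph{same} power to restrict to zero on $Y$ as soon as $\dim W$ is too small. First I would fix resolutions of singularities $\tau\colon \tilde Y\to Y\hookrightarrow X$ and $g\colon \tilde W\to W\hookrightarrow X$, and recall that $\sigma^{j}$ is a global holomorphic $2j$-form on $X$, with $\sigma^{n}$ a nowhere-vanishing trivialization of $K_X$. Assume, for contradiction, that $\codim W>2k$, i.e. $\dim W<2n-2k=\deg(\sigma^{n-k})$.

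The first ingredient is purely local. At a general smooth point $y\in Y$ the tangent space $U=T_yY\subset T_yX$ has dimension $2n-k$, so its symplectic orthogonal $U^{\perp}$ has dimension $k$, and the kernel of $\sigma|_{U}$ is $U\cap U^{\perp}\subseteq U^{\perp}$, hence of dimension at most $k$. Therefore the rank of $\sigma|_{T_yY}$ is at least $(2n-k)-k=2(n-k)$, which is exactly the condition guaranteeing that $\sigma^{n-k}$ is nonzero as a $(2n-2k)$-form on $T_yY$. Consequently $\tau^{*}\sigma^{n-k}\neq 0$ in $H^0(\tilde Y,\Omega^{2n-2k}_{\tilde Y})$. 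I would emphasize that this step uses nothing about $W$: it holds for \emph{every} codimension-$k$ subvariety, and it is the only place where non-degeneracy of $\sigma$ enters.

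The second, and substantial, ingredient is the transport of vanishing along rational equivalence, which is where Mumford's theorem on $0$-cycles is used. The hypothesis is pointwise, so it must first be \emph{spread out}: passing to the generic point $\eta$ of $\tilde Y$, the tautological point $\tau(\eta)$ is rationally equivalent over $\mathbb{C}(\eta)$ to a point of $W$; clearing denominators and shrinking to a dense open $V\subseteq\tilde Y$ produces, after a generically finite base change (with some integer multiplicity $N$), an algebraic correspondence between $\tilde Y$ and $\tilde W$ realizing these equivalences in families, the locus $\tilde Y\setminus V$ being absorbed by Noetherian induction on dimension. The content of Mumford's argument (in the Bloch--Srinivas formulation) is that the action of such a correspondence on global holomorphic $p$-forms, $p\geq 1$, is governed by its action on $CH_0$; since the latter factors through $W$, so does the former. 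Concretely $N\,\tau^{*}\sigma^{n-k}$ is pulled back from $H^0(\tilde W,\Omega^{2n-2k}_{\tilde W})$, and this group vanishes because $\dim\tilde W<2n-2k$. Hence $\tau^{*}\sigma^{n-k}=0$, contradicting the previous paragraph, and therefore $\codim W\leq 2k$.

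The main obstacle is precisely this last step: turning the \emph{pointwise} rational-equivalence hypothesis into an honest correspondence via spreading-out and Noetherian induction, and verifying that it acts on $\tau^{*}\sigma^{n-k}$ through forms on $\tilde W$. The linear-algebra bound and the dimension count are routine once the symplectic form is in hand; all the geometric weight sits in the Mumford/Bloch--Srinivas transport, which is why I would isolate it as a self-contained lemma (or invoke it as such) rather than reprove it inline.
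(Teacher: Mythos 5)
Your proof is correct, and it reaches the contradiction by a genuinely different mechanism than the paper, though both rest on the same two pillars: the contradiction hypothesis $\dim W<2n-2k$, the symplectic rank bound (your linear-algebra step is exactly Lemma \ref{fact:sympl}), and the countability-of-Hilbert-schemes spreading of the pointwise hypothesis into algebraic families. Where you diverge is the Mumford-type input. The paper never forms the graph correspondence: it observes that the rational-equivalence orbits $O_w$, $w\in W$, must contain components of dimension at least $k+1$ (a dimension count pitting $\dim Y=2n-k$ against $\dim W\leq 2n-2k-1$), assembles these into a family $p:Z\to B$ with $B\to W$ generically finite and $f:Z\to X$ dominating $Y$, and then applies Voisin's descent lemma for forms (Lemma \ref{lem:mumford-voisin}, i.e.\ \cite[Lemma 1.1]{Voi15}) to get $f^*\omega^{n-k}=p^*\eta_B$, which vanishes since $\dim B=\dim W<2n-2k$, contradicting $f^*\omega^{n-k}\neq 0$. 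You instead run the Bloch--Srinivas decomposition on the graph of $\tau:\tilde Y\to X$, writing $N\Gamma_\tau=Z_1+Z_2$ with $Z_1$ supported on $\tilde Y\times W$ and $Z_2$ supported over a proper closed subset of $\tilde Y$, and let the correspondence act on $\sigma^{n-k}$: the $Z_1$-term dies because $H^0(\tilde W,\Omega^{2n-2k}_{\tilde W})=0$. This buys you independence from the orbit dimension count (you never need the positive-dimensional orbits), at the price of invoking the full correspondence formalism; the paper's route yields the extra geometric fact about orbits and reuses a lemma it quotes anyway. One caveat: ``Noetherian induction'' is not the mechanism that disposes of the residual term $Z_2$ (equivalently, of the locus $\tilde Y\setminus V$). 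What actually kills it is either that a cohomology class supported on a proper closed subset of $\tilde Y$ has no $(p,0)$-component (Gysin pushforward shifts Hodge type), or, equivalently, that $H^0(\tilde Y,\Omega^p_{\tilde Y})\to H^p(V,\C)$ is injective for a dense Zariski open $V$, so vanishing of $\tau^*\sigma^{n-k}$ on $V$ forces vanishing everywhere. With that point stated correctly, your argument is complete.
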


For a similar result see \cite[Theorem 4.4]{AmerikVerbitsky14}.

Before proving the proposition, we first record the following fact from linear algebra.

\begin{lem}\label{fact:sympl}
Let $(F,\omega)$ be a symplectic vector space of dimension $2n$ and 
$V$ a subspace of codimension $k$ of $F$. Then $V$ contains a subspace $V'$ of codimension at most $2k$ in $F$ such that the restriction $\omega_{|V'}$ of the $2$-form  is symplectic on $V'$. In particular, $\omega^{2n-2k}_{|V}\neq 0$.
\end{lem}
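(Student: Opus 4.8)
The plan is to produce $V'$ as a linear complement, inside $V$, of the radical of the restricted form, and to control its dimension through the symplectic orthogonal complement. First I would introduce the $\omega$-orthogonal $V^\perp=\{x\in F:\omega(x,v)=0 \text{ for all } v\in V\}$. Since $\omega$ is nondegenerate, the induced map $F\to F^\vee$ is an isomorphism carrying $V^\perp$ isomorphically onto the annihilator of $V$; hence $\dim V^\perp=\dim F-\dim V=k$. The radical of the restricted form $\omega|_V$ is exactly $R:=V\cap V^\perp$, and in particular $\dim R\le \dim V^\perp=k$.

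Next I would take $V'$ to be any vector-space complement of $R$ in $V$, so that $V=R\oplus V'$. The key point --- and really the only thing to verify --- is that $\omega|_{V'}$ is nondegenerate. Suppose $v'\in V'$ satisfies $\omega(v',w')=0$ for all $w'\in V'$. Because $R\subseteq V^\perp$, every element of $R$ is $\omega$-orthogonal to all of $V$, so in particular $\omega(v',r)=0$ for all $r\in R$; combining the two, $v'$ is $\omega$-orthogonal to all of $V=R\oplus V'$, i.e. $v'\in V\cap V^\perp=R$. Since $v'\in V'$ and $R\cap V'=0$, this forces $v'=0$. Thus $\omega|_{V'}$ has trivial radical, hence is nondegenerate; being a nondegenerate alternating form it lives on an even-dimensional space, so $\omega|_{V'}$ is symplectic.

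Finally I would run the dimension count: $\dim V'=\dim V-\dim R\ge (2n-k)-k=2n-2k$, so $\codim_F V'=2n-\dim V'\le 2k$, as required. For the ``in particular'' clause, the symplectic space $(V',\omega|_{V'})$ has some even dimension $2m$ with $m\ge n-k$, so its exterior powers satisfy $\omega^{j}|_{V'}\ne 0$ for every $j\le m$; taking $j=n-k$ yields $\omega^{n-k}|_{V'}\ne 0$ (the relevant power being $\omega^{n-k}$, a form of degree $2n-2k$). Since $V'\subseteq V$, any form vanishing on $V$ would vanish on $V'$ as well, whence $\omega^{n-k}|_V\ne 0$, which is the asserted nonvanishing.

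The proof is entirely elementary linear algebra, so I do not expect a serious obstacle; the one place requiring care is the nondegeneracy argument in the second step, where it is essential that the complement be taken to the radical $R=V\cap V^\perp$ and not to an arbitrary subspace of $V$ --- equivalently, one is simply lifting the nondegenerate form induced on the quotient $V/R$ through the isomorphism $V'\cong V/R$. I would make sure to phrase that cleanly, since it is the conceptual heart of the statement.
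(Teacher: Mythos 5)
Your proof is correct and follows essentially the same route as the paper's: form the symplectic orthogonal $V^\perp$, note $\dim(V\cap V^\perp)\le k$, take $V'$ to be a complement of the radical $V\cap V^\perp$ inside $V$, and verify nondegeneracy by showing any $v'\in V'$ orthogonal to $V'$ lies in $V\cap V^\perp$. If anything, your write-up is slightly more careful than the paper's (which asserts the imprecise identity $(V')^\perp=V^\perp$ where only $V'\cap(V')^\perp=V'\cap V^\perp$ is needed), and you also spell out the ``in particular'' clause, correctly reading the nonvanishing power as $\omega^{n-k}$, a form of degree $2n-2k$, which the paper leaves implicit.
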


 \begin{proof}[Proof of Lemma \ref{fact:sympl}] Let $V^{\perp}$ be the orthogonal to $V$ with respect to the symplectic form $\omega$. Since $\omega$ is non-degenerate, we have
 $$\dim(V^{\perp})=k, $$
which implies that $\dim(V\cap V^{\perp})\leq k$. 

Let $V'$ be a subspace of $V$ such that $V'$ and $V\cap V^{\perp}$ are in direct sum in $V$. Then $\dim(V')\geq 2n-2k$. Furthermore, any $v\in V$ is orthogonal to $V\cap V^\perp$, so that $(V')^\perp=V^{\perp}$. As a consequence, $V'\cap (V')^{\perp}=V'\cap V^\perp=0$, and the restriction of $\omega$ to $V'$ is non degenerate.
 \end{proof}

Let us recall the following result, proved by Voisin \cite[Lemma 1.1]{Voi15} as an application of Mumford's theorem on $0$-cycles.

\begin{lem}\label{lem:mumford-voisin} Let $f : Z\ra X$ be a morphism between smooth projective varieties. Assume that there exists a surjective morphism $p : Z\ra B$ to a smooth projective variety with the property that any two points of $Z$ with the same image under $p$ are mapped by $f$ to rationally equivalent points of $X$.  

Then for any holomorphic form $\eta$ on $X$ there exists a holomorphic form $\eta_B$ on $B$ such that 
$$
 f^*\eta = p^* \eta_B.
$$
\end{lem}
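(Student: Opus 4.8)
The plan is to compare, by means of a fibre product, the two tautological ways of evaluating $f^*\eta$ on points of $Z$ lying over the same point of $B$, to feed the resulting family of rationally trivial $0$-cycles into Mumford's theorem, and finally to run a descent argument producing $\eta_B$. Concretely, I would first form the fibre product $W:=Z\times_B Z$ with its two projections $q_1,q_2:W\ra Z$ and set $g_i:=f\circ q_i:W\ra X$. A point of $W$ is a pair $(z_1,z_2)$ with $p(z_1)=p(z_2)$, so the hypothesis says exactly that $g_1(w)$ and $g_2(w)$ are rationally equivalent in $CH_0(X)$ for every $w\in W$. Passing to a resolution of singularities (we are in characteristic $0$) of the components of $W$ that dominate $B$, I may assume I work on a smooth projective variety, on which the cycle $\Gamma:=\Gamma_{g_1}-\Gamma_{g_2}\in CH^{\dim X}(W\times X)$ has rationally trivial fibre $\Gamma_w$ over each point.

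Second, I would invoke Mumford's theorem on $0$-cycles in the form that controls the action of correspondences on holomorphic forms: if $\Gamma\in CH^{\dim X}(W\times X)$ has $\Gamma_w$ rationally trivial for all $w$, then the induced map on holomorphic forms $\Gamma^*:H^0(X,\Omega^k_X)\ra H^0(W,\Omega^k_W)$ vanishes for every $k\ge 1$. Applied to our $\Gamma$ this yields $g_1^*\eta=g_2^*\eta$, that is $q_1^*(f^*\eta)=q_2^*(f^*\eta)$ on the (smooth locus of the) fibre product. The delicate point concealed here is the passage from \emph{pointwise} rational equivalence to an algebraic \emph{family} of rational equivalences, which is what Mumford's argument actually consumes: the locus of $W$ on which $g_1(w)\sim g_2(w)$ is a countable union of closed subvarieties, so, this locus being all of $W$ while $\C$ is uncountable, one such subvariety dominates $W$; spreading out over a dominant $\tau:T\ra W$ gives a genuine family of rational equivalences, and since $\tau^*$ is injective on holomorphic forms the identity descends from $T$ back to $W$. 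This is the step I expect to be the heart of the matter, and I would lean on Voisin's treatment \cite{Voi15} for its clean execution.

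Finally, writing $\beta:=f^*\eta\in H^0(Z,\Omega^k_Z)$, I would deduce from $q_1^*\beta=q_2^*\beta$ that $\beta$ descends to $B$. By generic smoothness there is a dense open $B_0\subset B$ over which $p$ is smooth, and, after a Stein factorisation, with connected fibres. Over $p^{-1}(B_0)$ the relation $q_1^*\beta=q_2^*\beta$ has two consequences: evaluating along the diagonal while varying the vertical parts of tangent vectors independently forces $\beta$ to annihilate every vertical tangent vector (so $\beta$ is horizontal), and the off-diagonal relations force the induced fibrewise form on $TB_0$ to be independent of the chosen point in each connected fibre. Together these give a well-defined holomorphic form $\eta_{B_0}$ on $B_0$ with $p^*\eta_{B_0}=\beta$ on $p^{-1}(B_0)$. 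Since $p^*\eta_{B_0}$ coincides with the globally regular form $\beta$, the form $\eta_{B_0}$ acquires no poles along $B\setminus B_0$, and as $B$ is smooth projective it extends to a form $\eta_B\in H^0(B,\Omega^k_B)$ with $f^*\eta=p^*\eta_B$, as required.

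Thus the fibre-product set-up and the descent are essentially formal once one has the correct family version of Mumford's theorem; the real obstacle is precisely that input, namely the reduction from pointwise to family-wise rational equivalence together with the assertion that rationally trivial fibre cycles kill the pullback of holomorphic forms of positive degree.
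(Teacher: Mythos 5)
There is an important mismatch of expectations here: the paper does not prove this lemma at all. It states it as a recollection of Voisin's result, citing \cite[Lemma 1.1]{Voi15} ``as an application of Mumford's theorem on $0$-cycles'', and then uses it as a black box in the proof of Proposition \ref{prop:mumford}. So the comparison to make is between your argument and the standard one underlying Voisin's lemma, and on that score your proposal is correct and is essentially that argument: form $W=Z\times_B Z$, observe that the hypothesis says the correspondence $\Gamma_{g_1}-\Gamma_{g_2}$ has rationally trivial $0$-cycle fibres, invoke the Bloch--Srinivas/Mumford statement that such a correspondence annihilates holomorphic forms of positive degree (and you correctly isolate the countability/spreading step as the real content of that statement), deduce $q_1^*f^*\eta=q_2^*f^*\eta$, and descend to $B$. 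Two points deserve tightening, though neither is a gap in strategy. First, the Stein factorisation remark muddles the descent: if you replace $B$ by the Stein base $B'$ you only produce a form on $B'$, and getting from $B'$ down to $B$ requires precisely the off-diagonal relations you invoke; conversely, those relations on the full fibre product $Z\times_B Z$ -- which contains pairs of points lying in \emph{different} connected components of the same fibre of $p$ -- already give independence of the descended form over the whole fibre, so the Stein factorisation should simply be dropped. Second, the extension of $\eta_{B_0}$ across $B\setminus B_0$ is asserted in one line (``acquires no poles''), but $p$ need not admit sections through critical points; the clean justification is to restrict $\beta$ to a multisection $Z'\subset Z$ generically finite onto $B$, note that $\eta_{B_0}$ is then a rational form on $B$ whose pullback under a generically finite surjective morphism from a smooth projective variety is holomorphic, and conclude using the fact that poles along divisors survive generically finite pullback, together with Hartogs-type extension in codimension at least $2$.
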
 

\begin{proof}[Proof of Proposition \ref{prop:mumford}]
Assume by contradiction that $W$ has dimension at most $2n-2k-1$. We argue as in \cite[Proof of Theorem 1.3]{Voi15}. If $w$ is any point of $W$, define 
$$O_w:=\{x\in X : x\equiv_{rat} w\},$$
where $\equiv_{rat}$ denotes rational equivalence. Then $O_w$ is a countable union of subvarieties of $X$, and a dimension count shows that for any $w$, $O_w$ contains a component of dimension at least $k+1$.

By the countability of Hilbert schemes, there exists a generically finite cover $\phi: B\to W$, a family $f:Z\to B$ of varieties of dimension $k+1$ and a morphism $f:Z\to X$ mapping 
every fiber of $p$ generically finitely onto points which are all rationally equivalent in $X$. We can assume that $B$ and $Z$ are smooth and projective. Lemma \ref{lem:mumford-voisin} shows that for any holomorphic form 
$\eta$ on $X$ there exists a holomorphic form $\eta_B$ on $B$ such that 
\begin{equation}\label{eq:contr}
f^*\eta = p^* \eta_B.
\end{equation}  
On the other hand, if $\omega$ is the symplectic form on $X$, then, by Lemma \ref{fact:sympl}, the $(2n-2k)$-holomorphic form $\eta=\omega^{n-k}$  verifies
$$
 f^*(\omega^{n-k})\not=0. 
$$ 
As $\dim(B)=\dim(W)<2n-2k$ the latter contradicts equation (\ref{eq:contr}).
\end{proof}

The results above allow us to give a simple criterion for the existence of uniruled divisors on polarized deformations of a given holomorphic symplectic variety $X$.  
For similar results see \cite[Corollaries 4.5 and 4.8]{AmerikVerbitsky14}.

%For any  irreducible component $\mathcal C'$ of $\mathcal C\times_{\mathcal M} \mathcal M'$, where $\mathcal M'\to \mathcal M$ is an \'etale base-change, let 

%We say that  the component $\mathcal  M$ satisfies condition ($\star$) if there exists an irreducible component $\mathcal C'$ as above such that 
%$$
%\dim ({ev_{\mathcal C'}} ({\mathcal C'}_{|\mathcal T}))= 2,
%$$
%for every one-dimensional substack $\mathcal T\subset \mathcal  M'$
%passing through the general point. 
%

\begin{cor}\label{cor:divisor}
Let $f: C\to X$ be genus zero stable curve in $X$, and let $\alpha=f_*[C]$. Assume that $f$ is ruling. Then the following holds.
\begin{enumerate}
\item There exists an irreducible component of $\overline{\mathcal M_0}(\mathcal X/B, \alpha)$ containing $[f]$ that dominates $B$. In particular, the stable map $[f]$ deforms over a finite cover of $B$;
\item for any point $b$ of $B$, the fiber $\mathcal X_b$ contains a uniruled divisor $D$ whose cohomology class is a positive multiple of the Poincar\'e dual of $\alpha$, and such that all of the irreducible components of $D$ are ruled by a curve of class $\alpha$.
\end{enumerate}
\end{cor}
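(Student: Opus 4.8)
The plan is to produce, inside the fibral Kontsevich space $\overline{M_0}(X,\alpha)$, an irreducible component $M$ through $[f]$ of dimension \emph{exactly} $2n-2$, so that Proposition \ref{prop:defcurves}(2) applies and yields (1); part (2) will then follow by spreading the resulting relative family out over all of $B$ and re-running the fibral analysis. Because $f$ is ruling, its deformation family sweeps out a codimension-$1$ subvariety $D\subseteq X$, and the induced (irreducible) family of stable maps lies in a single component $M$ of $\overline{M_0}(X,\alpha)$ containing $[f]$; in particular the image $D':=\ev(\mathcal C_M)$ of the universal curve $\mathcal C_M\ra M$ under evaluation contains $D$. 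Proposition \ref{prop:defcurves}(1) gives $\dim M\ge 2n-2$. For the reverse inequality I would first note that $X$, being holomorphic symplectic, has trivial canonical bundle and hence is not uniruled; therefore $D'\ne X$, and since $D\subseteq D'$ we get $\dim D'=2n-1$, i.e. $D'$ is a divisor.

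The crux --- and the step I expect to be the main obstacle --- is to rule out $\dim M\ge 2n-1$. Suppose it held. Then $\dim\mathcal C_M=\dim M+1\ge 2n$, so the proper evaluation $\ev:\mathcal C_M\ra D'$ has general fibre of dimension $e\ge 1$: through a general point $x\in D'$ passes an $e$-dimensional family of curves of $M$, sweeping out a subvariety $Y_x\subseteq D'$ with $\dim Y_x\ge 2$, all of whose points are rationally equivalent to $x$ (being joined to $x$ by curves of the family). These rational-equivalence classes fibre $D'$ generically, so a general complete intersection $W\subseteq D'$ of dimension $2n-1-\dim Y_x$ meets the general class; a dense set of points of $D'$ is then rationally equivalent to a point of $W$, while $\codim_X W=1+\dim Y_x\ge 3$. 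This contradicts Proposition \ref{prop:mumford} (applied with $Y=D'$, of codimension $k=1$), which forces $\codim_X W\le 2$. Hence $\dim M=2n-2$, and Proposition \ref{prop:defcurves}(2) shows that every irreducible component $\mathcal M$ of $\overline{\mathcal M_0}(\mathcal X/B,\alpha)$ containing $M$ dominates $B$; over a finite cover of $B$ this is exactly the statement that $[f]$ deforms, proving (1).

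For (2), since $\overline{\mathcal M_0}(\mathcal X/B,\alpha)\ra B$ is proper, the dominant component $\mathcal M$ has closed image, so $\mathcal M\ra B$ is \emph{surjective} with general fibre of dimension $2n-2$ (the fibre over $0$ being $M$, and every fibre having dimension $\ge 2n-2$ by Proposition \ref{prop:defcurves}(1) applied in the corresponding fibre). Thus for every $b\in B$ the fibre $\mathcal M_b$ is non-empty of dimension $\ge 2n-2$; applying the dimension analysis above to the holomorphic symplectic fibre $\mathcal X_b$ (again not uniruled) shows that the curves parametrized by a component of $\mathcal M_b$ sweep out a uniruled divisor $D_b\subseteq\mathcal X_b$, each of whose irreducible components is ruled by a curve of class $\alpha_b$.

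It remains to identify $[D_b]$. The evaluation images assemble into a divisor $\mathcal D\subseteq\mathcal X$ whose fibres $D_b$ have locally constant cohomology class, so this class is a deformation invariant; computing it on a deformation with Picard rank $1$ --- the generic point of the Noether--Lefschetz locus of $\alpha$ used in the proof of Proposition \ref{prop:defcurves}, whose Picard group is generated by the Poincar\'e dual $\alpha^\vee$ of $\alpha$ --- shows that the nonzero effective class $[D_b]$ is a positive multiple of $\alpha^\vee$ there, and hence on every fibre. This yields assertion (2). \cqfd
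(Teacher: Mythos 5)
Your part (1) is correct and is essentially the paper's own argument: the same count (universal curve of dimension $\geq 2n$ mapping to a $(2n-1)$-dimensional divisor, hence positive-dimensional evaluation fibres, hence $0$-cycles of the divisor supported in codimension $\geq 3$) contradicting Proposition \ref{prop:mumford}, followed by Proposition \ref{prop:defcurves}(2).

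Part (2), however, contains a genuine gap at the fibrewise step. You claim that ``applying the dimension analysis above'' to $\mathcal X_b$ shows that a component of $\mathcal M_b$, which is nonempty of dimension $\geq 2n-2$, sweeps out a \emph{divisor}. This is false: a $(2n-2)$-dimensional family of rational curves on a holomorphic symplectic $2n$-fold can perfectly well sweep out a locus of codimension $\geq 2$. This very paper exhibits such families (the codimension $2$ coisotropic loci of Section \ref{ss:codim2}, obtained via Theorem \ref{thm:KLM2_immagine}), and one checks that the Mumford-type argument gives no contradiction in that situation: if the swept locus has codimension $k$, the evaluation fibres have dimension $\geq k-1$, so the $0$-cycles are supported in codimension $\geq 2k$, which is exactly the bound allowed by Proposition \ref{prop:mumford}. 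The contradiction in part (1) only arises because the swept locus is \emph{already known} to be a divisor (from the ruling hypothesis) while the family has dimension $> 2n-2$; the analysis cannot upgrade ``$\dim \geq 2n-2$'' to ``sweeps a divisor''. The paper instead propagates divisoriality through the total space: every irreducible component of the swept locus $\mathcal Y\subset\mathcal X$ dominates $B$ (flatness of the universal curve over $\mathcal M$), so the component containing the central divisor $Y$ has dimension $2n$ and, being flat over the base (reduced to a curve), has divisorial fibres over \emph{every} $b$; these fibres are by construction ruled by curves of class $\alpha$.

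There is a second gap in the class identification. You propose to compute on ``a deformation with Picard rank $1$, the generic point of the Noether--Lefschetz locus of $\alpha$, whose Picard group is generated by $\alpha^\vee$''. Inside a projective family -- and projectivity is what the Kontsevich-space machinery requires -- the very general Noether--Lefschetz member has N\'eron--Severi rank $2$, spanned by $\alpha^\vee$ \emph{and the polarization}; the rank-one member lives in the unpolarized Kuranishi family and is projective only when $q(\alpha^\vee)>0$ (Huybrechts' criterion), an assumption the corollary does not make (ruling classes of non-positive square exist, e.g.\ the class of the exceptional curve ruling the exceptional divisor). This is precisely why the paper's proof is more involved: it first shows that $[Y_b]$ lies in the span of $\alpha^\vee$ and the polarization, and then eliminates the polarization either by the two-polarization trick when $b_2\geq 5$ (deform to N\'eron--Severi rank $\geq 3$ and intersect the two spans) or, to cover $b_2=4$, by replacing Kontsevich spaces with relative Douady spaces so as to deform to a possibly non-projective manifold with N\'eron--Severi group generated by $\alpha^\vee$. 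Your shortcut can be repaired when $q(\alpha)>0$, but as written it does not prove the corollary in the stated generality.
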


\begin{proof}
Let $ M$ be an irreducible component of $\overline{ M_0}(X, f_*[\mathbb P^1])$ containing $[f]$ such that, denoting by $Y$ be the subscheme of $X$ covered by the deformations of $f$ parametrized by $ M$, $Y$ is a divisor in $X$.

Let $ C \to   M$ be the universal curve. By Proposition \ref{prop:defcurves}, the dimension of $ M$ is at least $2n-2$. We claim that equality holds, which implies (1), again by Proposition \ref{prop:defcurves}. Assume by contradiction that $\dim( M)>2n-2$. Since $\dim(Y)=2n-1$, this implies that any fiber of the evaluation map $C\to Y\subset X$ is at least $1$-dimensional, which in turn shows that there exists a subvariety $W\subset X$ of dimension at most $\dim{Y}-2=2n-3$ such that any point of $Y$ is rationally equivalent to a point in $W$. Proposition \ref{prop:mumford} provides the contradiction. Moreover each irreducible component of such divisor is ruled by curves of class $\alpha$.

To show statement (2), it suffices to consider the case where $B$ has dimension $1$ and passes through a very general point of the Noether-Lefschetz locus associated to $\alpha$. Let $\mathcal M $ be an irreducible component of $\overline{{\mathcal M_0}}(\mathcal X/B, \alpha)$ containing $ M$. Then $\mathcal M$ dominates $B$. Let $\mathcal Y\subset \mathcal X\ra B$ be the locus in $\mathcal X$ covered by the deformations of $f$ parametrized by $\mathcal M$. Since $\mathcal M$ dominates $B$, any irreducible component of $\mathcal Y$ dominates $B$. Since the fiber of $\mathcal Y\ra B$ over $0$ is a divisor in $\mathcal X_0=X$, the fiber of $\mathcal Y\ra B$ at any point $b$ has codimension $1$. 

By construction, all the irreducible codimension $1$ components of $Y_b$ are ruled by a curve of class $\alpha$. Note that we cannot deduce that the class of the divisor $Y_b$ inside $\mathcal X_b$ equals that of $Y$
because $\mathcal Y_0$ contains $Y$ and the inclusion could be proper.

Let $b$ be very general in the Noether-Lefschetz locus. The N\'eron-Severi group of $\mathcal X_b$ has rank at most $2$, and it is generated over $\Q$ by the Poincar\'e dual $\alpha^\vee$ of $\alpha$ and the class of the polarization. In particular, the class of the divisor $Y_b$ is a linear combination of $\alpha^\vee$ and the class of the polarization. As a consequence, so is the class of $Y_0$. This holds for any choice of a polarization on $X$.

First assume that $b_2(X)\geq 5$, so that $h^{1,1}(X)\geq 3.$ Assume that $\alpha^\vee$ is not proportional to the class of $Y_0$. Then the Picard number of $X_b$ is $2$ for general $b$, and we may choose $B$ in such a way that there exists $b'$ in $B$ such that the N\'eron-Severi group of $NS(X_{b'})$ has rank at least $3$. We may replace $X$ with $X_{b'}$ and assume that $NS(X)$ has rank at least $3$.

Let $h$ and $h'$ be two ample classes on $X$ such that $\alpha^\vee, h$ and $h'$ span a subspace of rank $3$ of $NS(X)$. Then the class of the divisor $Y_0$ is both a linear combination of $\alpha^\vee$ and $h$ and of $\alpha^\vee$ and $h'$. As a consequence, it is proportional to $\alpha^\vee$.

%
%
%
%generated by the dual of $\alpha$, which shows the result on the cohomology class. If $q(\alpha)\leq 0$, the N\'eron-Severi group of a general fiber $\mathcal X_b$ is generated by the dual of $\alpha$ and by an ample class $A_b$, hence the divisor $D$ ruled by $\alpha$ is a linear combination of $A_b$ and the dual of $\alpha$. If $b_2\geq 5$, we can take a special point $b'\in B$ where there exists an ample class $A'_{b'}$ outside of the span of the dual of $\alpha$ and $A_{b'}$, and we obtain a different projective family $\mathcal{X}'\rightarrow B'$, over which $\alpha$ deforms and such that on its general fiber $\mathcal X'_{b'}$ the N\'eron-Severi group is generated by the dual of $\alpha$ and the parallel transport of $A'_{b'}$. Hence, $D$ is a linear combination also of $A'_{b'}$ and the dual of $\alpha$, which implies it must be proportional to $\alpha$.\\ 

To give a general argument that covers the case $b_2=4$, we may replace the moduli space of stable genus 0 curves -- for which classical references require projectivity assumptions -- with the (relative) Douady space parametrizing rational curves, and work with non projective manifolds as well. In this framework, we may  deform $X$ with the class $\alpha$ and the uniruled divisor under consideration to an irreducible holomorphic symplectic variety with N\'eron-Severi group generated by $\alpha^\vee$, which proves the proportionality result.

Knowing that the cohomology class of $Y_b$ is proportional to the Poincar\'e dual of $\alpha$, let $\lambda$ be the rational number such that
$$[Y_b]=\lambda [\alpha]^\vee.$$
Then $\lambda$ is independent of $b$. To prove that $\lambda$ is positive, we may assume that $X_b$ is projective by choosing a suitable deformation of the pair $(X, \alpha)$. Let $H$ be an ample class on $X_b$. Then $q(H, Y_b)>0$ and $H.\alpha>0$, so that the coefficient of proportionality is positive.

\end{proof}

%\begin{cor}\label{cor:uniruled}
%Let $X$ be a projective holomorphic symplectic manifold, $D$ an irreducible uniruled divisor of positive square and $R$ a general rational curve in its ruling. The dual of $R$ with respect to the Beauville-Bogomolov quadratic form is a positive rational multiple of $D$. 
%\end{cor}
%
%\begin{proof}
%Let $f : \mathbb P^1\ra X$ be the normalization of $R$. Consider a small very general deformation $X'$ of $X$ along which the class of $R$ remains Hodge and generates the group of Hodge classes in $H_2(X', \Z)$. Corollary \ref{cor:divisor} guarantees that both $f$ and the uniruled divisor $D$ deform to $X'$ -- note that since $R$ is general in the ruling of $D$, $D$ is the only uniruled divisor containing $R$. The invariance under deformation of the Beauville-Bogomolov form allows us to replace $X$ by $X'$ and thus to assume that $X$ has Picard number $1$. Now let $H$ be an ample divisor. We have $H.R>0$ and $q(H, D)>0$ by \cite[1.10]{Huybrechts99}, which concludes the proof.
%\end{proof}

%%%%%%%%%%%%%%%%%%%%%%%%
%
\section{Examples, proof of Theorem \ref{thm:main} and further remarks}
%
%%%%%%%%%%%%%%%%%%%%%%%%

In this section we will use the notation and the basic facts recalled in Section 2.1. 
We start by constructing examples of uniruled divisors using the classical 
Brill-Nother theory applied to the desingularizations of curves on a $K3$ surface. 
Then we will present the proof of Theorem \ref{thm:main}. We will also compare
conditions (i) and (ii) in Remark \ref{rmk:precisestate} with the condition appearing in \cite[Corollary A.3]{zal}. 
Finally we will show the existence of  codimension 2
coisotropic subvarieties on projective holomorphic symplectic manifold of \kntiposp in some of the cases where Theorem \ref{thm:main} does not provide uniruled divisors.
% \francois{where Theorem \ref{thm:main} does not provide uniruled divisors ?}. 

\subsection{Examples}\label{ss:examples}

Let $n$ be a positive integer, and let $k$ be an integer between $1$ and $n$. Write $g=k-1$.
Let  $(S,H)$ be a general  polarized $K3$ surface, $m>0$ an integer and  $L:=mH$ with $p_a(L):=p\geq g$.  
Consider the Severi variety parametrizing nodal genus $g$ 
curves inside $|L|$. It is a locally closed subvariety of $|L|$ (see e.g. \cite{sernesi2007deformations} for the basic facts on Severi varieties). 

Recall that the Severi variety on regular surfaces has the expected codimension (equal to the number of nodes)
whenever non-empty (see e.g. \cite[Example 1.3]{CS}). In the case of multiples of the hyperplane section on a general $K3$ surface
non-emptiness has been shown by \cite{Chen02} (see also \cite{GK} for another proof and a generalization). The case $m=1$ corresponds to the ``classical''  Bogomolov-Mumford theorem (see e.g. \cite[Ch. VIII, Section 23]{Barthetal}).

Consider a $g$-dimensional family $\mathcal C_T\to T,\ T\subset |L|$, of curves of geometric genus $g$ whose general member is a nodal curve. By the above $T$ is given by an irreducible component
of the Severi variety parametrizing  
curves with $(p-g)$-nodes inside $|L|$.%Let $\widetilde {\mathcal C}\ra T$ be the relative normalization.

It is well-known that the relative symmetric product ${\mathcal C}_T^{(g+1)}$ is uniruled. Indeed for each $t\in T$  the symmetric product $ C_t^{(g+1)}$ is uniruled
as the Abel-Jacobi morphism $AJ_{g+1}: \tilde C_t^{(g+1)}\to \Pic^{g+1} (\tilde C_t)$ onto the $g$-dimensional Picard variety of the desingularization $\tilde C_t\to C_t$
has fibers isomorphic to $\mathbb P H(\tilde C_t,L)$, for all $L\in \Pic^{g+1} (\tilde C_t)$.

Its dimension is $2g+1$.  We have the following key result. 

\begin{prop}\label{prop:mainK3}
Let $S$ and $L$ be as above. %a line bundle such that $L^2=2p-2>0$ and $g\geq 0$ an integer such that $p\geq g$. 
Then $S^{[g+1]}$ contains a uniruled divisor
which is the image of ${\mathcal C}_T^{(g+1)}$ under the natural rational map 
$${\mathcal C}_T^{(g+1)}\dashrightarrow S^{[g+1]}$$
for a certain irreducible component $T$
of the Severi variety parametrizing  
curves with $(p-g)$-nodes inside $|L|$.
% covered by the relative symmetric product of order $(g+1)$ over a $g$-dimensional family of nodal genus $g$ curve in $|H|$. 
%and let $p_1,p_2,\dots,p_{g+1}$ be general points on a nodal genus $g$ curve $C$ inside $|H|$. Then, there are finitely many curves of genus at most $g$ passing through those points and the locus covered by points on nodal genus $g$ curves inside $S^{[g+1]}$ is a divisor.
\end{prop}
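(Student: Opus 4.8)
The plan is to study the natural rational map
$$\psi : \mathcal{C}_T^{(g+1)} \dashrightarrow S^{[g+1]}$$
which sends a general point $(t,\{p_1,\dots,p_{g+1}\})$, with the $p_i$ distinct smooth points of $C_t$, to the reduced length-$(g+1)$ subscheme $\{p_1,\dots,p_{g+1}\}\subset S$, and to show that $\psi$ is \emph{generically finite onto its image}. Since $\dim \mathcal{C}_T^{(g+1)}=2g+1=\dim S^{[g+1]}-1$, this at once gives that $D:=\overline{\psi(\mathcal{C}_T^{(g+1)})}$ is a divisor. Uniruledness of $D$ is then automatic: $\mathcal{C}_T^{(g+1)}$ is uniruled, since through a general point passes the $\mathbb P^1=|\mathcal L|$ occurring as a general fibre of $AJ_{g+1}$ on the normalization $\tilde C_t$, and the image of a uniruled variety under a dominant rational map is uniruled. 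Concretely, the images of these Abel--Jacobi pencils are the rational curves ruling $D$.

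The crux is therefore generic finiteness, equivalently that a general $Z=\{x_1,\dots,x_{g+1}\}$ in the image lies on only finitely many curves of the family $T$. I would establish this by a tangent space computation at a general $(t_0,\{x_i\})$. As $T$ is a component of the Severi variety of expected dimension $g$, it is smooth at $t_0$ with tangent space the space of equisingular first-order deformations of $C_{t_0}$ in $|L|$. The key (K3-specific) input is adjunction: since $\omega_S\cong\mathcal O_S$ one has $N_{C_{t_0}/S}\cong \omega_{C_{t_0}}$, and pulling back to the normalization $\tilde C:=\tilde C_{t_0}$ (of genus $g$) and twisting down by the preimages of the $p-g$ nodes identifies the equisingular deformation space with $H^0(\tilde C,\omega_{\tilde C})$, of dimension $g$. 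Under this identification the \emph{normal} component of $d\psi$ — the obstruction to deforming $C_{t_0}$ while keeping all the $x_i$ on the curve — is the evaluation map
$$\ev : H^0(\tilde C,\omega_{\tilde C}) \longrightarrow \bigoplus_{i=1}^{g+1}(\omega_{\tilde C})_{x_i}.$$

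Everything then reduces to injectivity of $\ev$, whose kernel is $H^0\big(\tilde C,\omega_{\tilde C}(-x_1-\cdots-x_{g+1})\big)$. Since the zero divisors of nonzero sections of $\omega_{\tilde C}$ move in the $(g-1)$-dimensional canonical system, no such divisor can contain the $g+1>g-1$ general points $x_i$; hence the kernel vanishes (the degenerate case $g=0$ being the classical Bogomolov--Mumford statement, where $\psi$ is just the inclusion of the finitely many rational curves in $S=S^{[1]}$). Combining this with the obvious injectivity of $d\psi$ along the fibre directions $\bigoplus_i T_{x_i}C_{t_0}$ shows $d\psi$ is injective at a general point, so $\psi$ is generically finite; equivalently the fibre $\{t\in T: x_1,\dots,x_{g+1}\in C_t\}$ is finite. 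Thus $D$ has dimension $2g+1$ and is the desired uniruled divisor.

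I expect the main obstacle to be the bookkeeping in the second step: verifying carefully that the tangent space to the chosen Severi component is $H^0(\tilde C,\omega_{\tilde C})$ via adjunction, and that the normal part of $d\psi$ really is the evaluation map $\ev$. Once this identification is in place, the injectivity is the soft numerical statement $g+1>g$. One should also record the minor points that $\psi$ is defined at the general point (the chosen points are smooth on $C_{t_0}$ and distinct in $S$) and that $T$ may be taken with irreducible general member, so that $D$ is irreducible; both follow from the cited non-emptiness and expected-dimension results for Severi varieties on a general $K3$ surface.
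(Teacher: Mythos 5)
Your proof is correct, but it takes a genuinely different route from the paper's. The paper proceeds by induction on $g$: starting from nodal rational curves ($g=0$, Chen's theorem), it builds a specific Severi component $V$ by smoothing one node of the curves in a genus-$(g-1)$ component $W$, and shows that the image $D$ of $\mathcal{C}_V^{(g+1)}$ in $S^{(g+1)}$ \emph{strictly} contains the codimension-two locus $D_W$ swept out by $W$; strictness is proved by a point-set argument (if $D=D_W$, an open subset of an irreducible genus-$g$ curve $C\in V$ would be covered by the finitely many genus-$(g-1)$ curves of $W$ through $g$ fixed general points, forcing $C$ to coincide with one of them), and since $D$ is irreducible and strictly contains the $2g$-dimensional $D_W$, it must have dimension $2g+1$, which yields generic finiteness a posteriori. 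You instead prove generic finiteness directly by an infinitesimal argument: tangent space of the Severi component $=H^0(\tilde C,\omega_{\tilde C})$ (via $N_f\cong\omega_{\tilde C}$, using $\omega_S\cong\mathcal{O}_S$), normal component of $d\psi=$ evaluation at the $g+1$ points, and $H^0\big(\tilde C,\omega_{\tilde C}(-x_1-\cdots-x_{g+1})\big)=0$ for general points. What each buys: your argument is non-inductive, shorter, and establishes generic finiteness for \emph{every} component of the Severi variety, whereas the paper's construction only yields it for the particular components obtained by node-smoothing (hence the phrase ``a certain irreducible component'' in the statement); the paper's argument, on the other hand, needs no deformation theory of maps beyond the regularity of Severi varieties on regular surfaces that it already cites, at the price of the inductive bookkeeping. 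The facts you defer --- smoothness of the Severi variety at immersed curves with tangent space the equisingular deformations, and compatibility of the characteristic map with evaluation away from the nodes --- are classical (Tannenbaum-type regularity, the same input the paper invokes), and your numerical condition $g+1>\dim|\omega_{\tilde C}|=g-1$ is exactly what is needed for general points to lie on no canonical divisor, so there is no gap there.
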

\begin{proof}
%To prove the statement we can actually work over a very general polarized K3 surface, so let us suppose 
We may assume that $\Pic(S)=\mathbb{Z} H$. We will prove this statement by induction on $g$. 
For  $g=0$, the statement is simply the existence of nodal rational curves proved by \cite{Chen02}. 

%Observe that, by e.g. \cite[Example 1.3]{CS}, the existence of nodal rational curves implies, for all $g\leq p_a(L)$, that  the Severi variety parametrizing nodal genus $g$ 
%curves inside $|L|$ is non-empty of the expected dimension $g$. 

It is sufficient to show the claim on the symmetric product $S^{(g+1)}$ of $S$. 
More precisely, we will prove the following statement:
there exists an irreducible component $V$ of the Zariski closure of the Severi variety parametrizing nodal genus $g$ 
curves inside $|L|$ such that, if $\mathcal C_V\to V$ denotes the universal curve and $\mathcal C^{(g+1)}_V\to V$ the relative symmetric product, the natural  morphism
$$
\mathcal C^{(g+1)}_V\to S^{(g+1)}
$$
is generically finite onto its image. Note that this is equivalent to saying that $(g+1)$-generic points on a generic curve of the family lie only on a finite number of
curves of the family. 
 
Indeed as 
$$
\dim \mathcal C^{(g+1)}_V= \reldim (\mathcal C^{(g+1)}_V) + \dim V= (g+1)+g=2g+1
$$
 it follows that the image is a divisor inside $S^{(g+1)}$, and such divisor is uniruled as observed above. %Since the $k$-th symmetric product of a curve is uniruled for $k$  greater than the genus of the curve, as a by-product we have that such divisor is uniruled.

Note also that the positive dimensional fibers of the morphism $\mathcal C^{(g+1)}_V\to S^{(g+1)}$ cannot lie in a fiber of  $\mathcal C_V^{(g+1)}\to V$,
as $C_t^{(g+1)}$ injects into $S^{(g+1)}$ for every $t\in V$.

By inductive hypothesis, there exists an irreducible component $W$ of the (Zariski closure of the) Severi variety parametrizing nodal genus $g-1$ 
curves inside $|L|$ such that, if $\mathcal C_W\to W$ denotes the universal curve and $\mathcal C^{(g)}_W\to W$ the relative symmetric product, the natural  morphism
$$
\mathcal C^{(g)}_W\to S^{(g)}
$$
is generically finite onto its image. 

Now let $V$ be the Zariski closure of an irreducible component of the Severi variety of nodal genus $g$ curves in $|H|$ obtained by smoothing one node of the curves in $W$ (which can be done again by the regularity of the Severi variety, \cite[Example 1.3]{CS}). 
By construction $W\subset V$.
Let $\mathcal C_V\to V$ be the universal curve. Its restriction over $W$ yields a map $\mathcal C_W\to W$. 
Let $D$ be the image of the morphism
$$
\mathcal C_V^{(g+1)}\to S^{(g+1)}.
$$
Observe that $D$ contains the image $D_{W}$ of 
$$
\mathcal C^{(g+1)}_W\to S^{(g+1)}.
$$
We claim that by the inductive hypothesis $D_{W}$ has codimension 2, or, equivalenty, that the morphism $\mathcal C^{(g+1)}_W\to S^{(g+1)}$ is generically finite onto its image. 
Indeed if $\xi=x_1+\ldots+x_{g+1}$ is a generic point of the image, then, say, $x_1+\ldots+x_{g}$ is a generic point of the 
image of the morphism $\mathcal C^{(g)}_W\to S^{(g)}$. By the inductive hypothesis the points $x_1,\ldots, x_{g}$ lie on finitely many curves of the family $W$, {\it a fortiori} that will be true for
$x_1,\ldots, x_{g}, x_{g+1}$ and the claim follows. 

We want to prove that $D$  contains $D_{W}$ strictly. If this were not the case, by irreducibility, we would have $D=D_{W}$. 
Let $U\subset D$ be an open subset over which the morphisms $\mathcal C^{(g+1)}_W\to S^{(g+1)}$ and $\mathcal C_V^{(g+1)}\to S^{(g+1)}$
are smooth and let $p_1+p_2+\dots +p_{g+1}$ be a  point in $U$. Let $C$ be a nodal genus $g$ curve in $V$ containing these points. 
Let us fix the first $g$ points $p_1,\ldots,p_g$. By induction these points are contained inside a finite number of curves of genus $g-1$ belonging to $W$. 
Let $B_1,\dots, B_m$ be all such curves. Let $U_C\subset C$ be an open subset such that for all $q\in U_C$ we have
$p_1+\ldots+p_g+q\in U$. As we have seen above $p_1,\ldots,p_g,q$ lie on finitely many curves of genus $g-1$ belonging to $W$, and these curves must be
$B_1,\dots, B_m$. Therefore, as $q$ varies in $U_C$, we deduce that $U_C$ is  a subset of a finite union of genus $(g-1)$ curves. As $C$ is irreducible, there is an $i$ such that $C= B_i$, which is clearly a contradiction. Therefore $D$ must strictly contain $D_{W}$ and be a divisor, which is necessarily uniruled.

\end{proof}

%The Brill-Noether number $\rho(g,1,k)=g-2(g-k+1)=g$ is nonnegative. As a consequence (see e.g. \cite[Ch. V, Theorem 1.1]{ACGH} and the references therein), each fiber of $\widetilde {\mathcal C}\ra T$ carries a $g$-dimensional family of $\mathfrak g^1_k$. 
%
%Inside the relative $k$-th symmetric product $Sym^k(\widetilde {\mathcal C}/ T)$, consider the family $ \tilde {\mathcal C}^1_k/ T$ of smooth rational curves associated to these linear systems. 
%We have
%$$
%\dim(\tilde {\mathcal C}^1_k) = \rho(g,1,k)+1+g=2k-1.
%$$
%(the fact that for general $t\in T$ we have $\dim(W^1_d(\tilde C_t))=\rho(g,1,k)$ follows e.g. from \cite[Corollary 6.6, (ii)]{CK}). 
%Let $Z$ be the image of the the natural map from $ \tilde {\mathcal C}^1_k/ T$ to $S^{[k]}$. Since passing through a point $z\in Z$ imposes $(k-1)$ conditions
%to our family $T$, we get
%$$
% \dim (Z) = \dim(\tilde {\mathcal C}^1_k) - (g-(k-1))= 2k-1.$$
%As a consequence, $Z$ is a uniruled divisor in $S^{[k]}.$

\begin{rmk}\label{rmk:ndiv}
\rm{ Let $n\geq 2$ be an integer. Let $S$ be a general projective $K3$ surface, $H$ an ample divisor on it and 
$L=mH,\ m\geq 1$ a line bundle with $p_a(L)\geq n-1$. 
For all $k=1,\ldots, n$ consider a $(k-1)$-dimensional family $T\subset |L|$ of nodal curves of geometric genus $k-1$. 
Then consider the closure of the image of the rational map
 $$ {\mathcal C}^{(k)}_T + S^{(n-k)}\dashrightarrow S^{[n]}. 
 $$ 
 By Proposition \ref{prop:mainK3} 
 we obtain $n$ distinct uniruled divisors $D_1,\ldots, D_n$ inside $S^{[n]}$.  }
 \end{rmk}
 
Let us now compute the class of the curve in the ruling of the above divisors. Let us
 denote by $\mathbb P^1_{\mathfrak g^1_k}\subset S^{[k]}$ the image of a rational curve associated to one of the $\mathfrak g^1_k$'s given, as in the proof of Proposition \ref{prop:mainK3}, by any $k$-points on the curve. 
We remark that the hypotheses  imply that for a general choice, these linear series are simple and the nodes of the curve are non neutral (a node $p\in C$ is said to be non neutral with respect to a linear series ${\mathfrak g^1_k}$ on the desingularization $\nu:\tilde C\to C$ if ${\mathfrak g^1_k}(-\nu^{-1}(p))=\emptyset$, i.e. if the 2 points above the node do not belong to the same fiber of the morphism associated to the linear series).
Then, the Riemann-Hurwitz formula (see e.g. \cite[Section 2]{CK}) yields 
\begin{equation}\label{eq:S^k}
[\mathbb P^1_{\mathfrak g^1_k}]= mh - 2(k-1)r_k,
\end{equation}
where $mh$ is the class of $L$ in the N\'eron-Severi group of $S$.\\
 If we add $(n-k)$-distinct generic points $\eta=q_1+\ldots+q_{n-k}$ to $\mathbb P^1_{\mathfrak g^1_k}\subset S^{[k]}$ we get a rational curve inside $S^{[n]}$ which we  denote by $R_k$. 

%Finally, by doing in family what we have done for a single rational curve $\mathbb P^1_{\mathfrak g^1_k}$, i.e.  by considering, 

\begin{prop}\label{prop:class1}
Let $k$ be an integer between $1$ and $n$. Then 
\begin{enumerate}
\item The class of $R_k\subset S^{[n]}$ in $H_2(S^{[n]},\Z)$ is $h-(2k-2)r_n$;
\item The class of $D_k\subset S^{[n]}$ in $H^2(S^{[n]},\Z)$ is proportional to $(2n-2)h-(2k-2)\delta_n$.
\end{enumerate}
\end{prop}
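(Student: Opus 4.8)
The plan is to deduce (1) from the class computation (\ref{eq:S^k}) already carried out on $S^{[k]}$, by showing that adjoining the fixed generic points $\eta=q_1+\dots+q_{n-k}$ affects neither the $H_2(S,\Z)$-component nor the coefficient along $r_n$ of the class; and then to deduce (2) from (1) by a Beauville--Bogomolov duality computation, using that $D_k$ is ruled by $R_k$. Throughout, $h$ denotes the class of $L$.

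First I would prove (1). Recall the orthogonal decomposition $H_2(S^{[n]},\Z)=H_2(S,\Z)\oplus^\perp\Z r_n$ and write $[R_k]=\beta+c\,r_n$ with $\beta\in H_2(S,\Z)$ and $c\in\Z$; by (\ref{eq:r.delta}) one has $c=-[R_k]\cdot\delta_n$, while $\beta$ is determined by the numbers $i(\alpha)\cdot[R_k]$ for $\alpha\in H^2(S,\Z)$. The rational map $\PP^1_{\mathfrak g^1_k}\dra S^{[n]}$ defining $R_k$ is the sum of the moving family underlying $\PP^1_{\mathfrak g^1_k}$ -- whose universal subscheme is, through the degree-$k$ map $\phi\colon\tilde C\ra\PP^1$, the image of $\tilde C$ -- and of the constant length-$(n-k)$ part $\eta$. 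For a general choice the series is simple, so this map is birational onto $R_k$ and $[R_k]$ is its honest class. Intersecting with $i(\alpha)$ only sees the image $1$-cycle of the universal subscheme in $S$: the moving part maps via the normalisation onto $C$ with degree one, while the constant part maps to points, so $i(\alpha)\cdot[R_k]=\alpha\cdot[C]=\alpha\cdot h$ and hence $\beta=h$, unchanged from $S^{[k]}$. Intersecting with $\delta_n=\tfrac12 E_n$ counts the non-reduced members of the family: since the $q_i$ are generic (in particular off $C$ and pairwise distinct) they never collide, and by the non-neutrality of the nodes of $C$ recorded before (\ref{eq:S^k}) the only collisions come from the ramification of $\phi$; their number is unchanged by adjoining $\eta$, so $c$ is the same as in (\ref{eq:S^k}). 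Thus $[R_k]$ is obtained from (\ref{eq:S^k}) by replacing $r_k$ with $r_n$, giving $[R_k]=h-(2k-2)r_n$.

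Next I would prove (2). By construction $D_k$ is a uniruled divisor ruled by the curve $R_k$, so $R_k$ is a ruling curve and Corollary \ref{cor:divisor}(2) shows that $[D_k]$ is a positive rational multiple of the Beauville--Bogomolov dual $[R_k]^\vee\in H^2(S^{[n]},\Q)$. It then remains to compute $[R_k]^\vee$ from (1). Using that the embedding $H^2(S,\Z)\hookrightarrow H^2(S^{[n]},\Z)$ of (\ref{H2}) is an orthogonal summand on which $q$ is the (unimodular) $K3$ pairing, and that $r_n^\vee=\tfrac{1}{2(n-1)}\delta_n$ -- which follows from $q(\delta_n)=-2(n-1)$ together with $r_n\cdot\delta_n=-1$ -- one finds $[R_k]^\vee=h-\tfrac{k-1}{n-1}\delta_n$. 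Scaling by $2(n-1)$ yields $(2n-2)h-(2k-2)\delta_n$, so $[D_k]$ is proportional to $(2n-2)h-(2k-2)\delta_n$, as asserted.

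The two intersection numbers in (1) are routine bookkeeping with the decomposition (\ref{H2}) and the Riemann--Hurwitz count already encoded in (\ref{eq:S^k}); the step deserving care is the proportionality in (2). The cleanest justification is to invoke Corollary \ref{cor:divisor}(2), checking that $D_k$ is genuinely among the uniruled divisors it controls, namely that it is swept out by deformations of $R_k$. Alternatively, since $\Pic(S)=\Z H$ forces $\rho(S^{[n]})=2$ with $\mathrm{NS}(S^{[n]})_\Q=\Q h\oplus\Q\delta_n$, one may run the proportionality argument from the proof of Corollary \ref{cor:divisor}(2) directly for $D_k$, the point being that a uniruled divisor swept by a $(2n-2)$-dimensional family of curves of class $[R_k]$ has class proportional to $[R_k]^\vee$. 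I expect this proportionality to be the main obstacle, the remainder reducing to the explicit duality computation above.
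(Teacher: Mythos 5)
Your part (1) is correct and is essentially the paper's own argument: decompose $[R_k]$ along $H_2(S,\Z)\oplus^{\perp}\Z r_n$, note that the generic points $\eta$ contribute nothing to either pairing, and read the $r_n$-coefficient off (\ref{eq:S^k}) by intersecting with $\delta_n$ (the paper phrases this as $R_k\cdot h=2p-2$ and $R_k\cdot\delta_n=\mathbb P^1_{\mathfrak g^1_k}\cdot\delta_k$).

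Part (2) has a genuine gap, and it sits exactly where you suspected. Corollary \ref{cor:divisor}(2) is an existence statement, and its proof does \emph{not} control the class of the divisor swept out by the component of $\overline{M_0}(X,\alpha)$ through $[f]$; it controls the class of $\mathcal Y_0$, the fiber over $0$ of the locus swept by a component of the \emph{relative} Kontsevich space specialized along the Noether--Lefschetz locus. The paper flags this explicitly inside that very proof: ``we cannot deduce that the class of the divisor $Y_b$ inside $\mathcal X_b$ equals that of $Y$ because $\mathcal Y_0$ contains $Y$ and the inclusion could be proper.'' In your situation one does have $Y=D_k$ (both are irreducible divisors and one contains the other), but $\mathcal Y_0$ may acquire extra irreducible components in the limit, each again ruled by curves of class $\alpha$; so what the corollary actually gives is $[D_k]+(\hbox{effective class})\propto[R_k]^\vee$, which does not pin down $[D_k]$. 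Your fallback of ``running the proportionality argument directly for $D_k$'' founders on the same point: on $S^{[n]}$ itself $\rho=2$ and both $h$ and $\delta_n$ are algebraic, so nothing forces proportionality without deforming, and once you deform along the Noether--Lefschetz locus of $\alpha$ you are again only controlling the class of the full limit divisor. The statement you ultimately need -- that an irreducible divisor ruled by a $(2n-2)$-dimensional family of curves of class $\beta$ has class proportional to $\beta^\vee$ -- is nowhere proved in the paper and does not follow from Corollary \ref{cor:divisor}.

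The paper's own proof of (2) avoids all of this by a direct enumerative computation. Writing $[D_k]=ah-b\delta_n$, it evaluates against two explicit test curves, using the generic finiteness established in Proposition \ref{prop:mainK3} (the number $M$ of curves of the family $T$ through a generic length-$(k-1)$, resp.\ length-$(k-2)$, subscheme): pairing with $\xi+C$ of class $h$ (with $\xi$ generic of length $n-1$ and $C\in|L|$ generic) gives $D_k\cdot h=M\binom{n-1}{k-1}(2p-2)$, hence $a=M\binom{n-1}{k-1}$; pairing with $\P T_p(S)+\xi$ of class $r_n$ gives $D_k\cdot r_n=M\binom{n-2}{k-2}$, hence $b=M\binom{n-2}{k-2}$. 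The unknown multiplicity $M$ cancels in the ratio $a/b=(n-1)/(k-1)$, which is precisely the proportionality to $(2n-2)h-(2k-2)\delta_n$. Your Beauville--Bogomolov duality computation of $[R_k]^\vee$ is correct, but to complete your proof you would need either such a direct count or an honest argument that $\mathcal Y_0$ has no component other than $D_k$.
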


\begin{proof}
(1) Write $R_k = ah-b r_n$. As $R_k=\mathbb P^1_{\mathfrak g^1_k}+\eta$, the intersection product
$R_k\cdot h$ equals $2p-2$, from which, using the facts recalled in Section 2.1 we deduce $a=1$. Again for the choice of $\eta$ we have that $R_k\cdot \delta_n=\mathbb P^1_{\mathfrak g^1_k}\cdot \delta_k$. From this, from (\ref{eq:S^k}) and from Section 2.1 we deduce that
$b=2(k-1)$. This proves the first statement. 

(2) For the second statement we argue similarly as follows. Write $D_k = ah-b \delta_n$. Let $x_1, \ldots, x_{n-1}\in S$ be general points and $C\in |L|$ a general curve. Set $\xi :=x_1+ \ldots+ x_{n-1}$ and consider the curve in $S^{[n]}$ given by $\xi+C$. Such a curve has class $h\in H_2(S,\Z)$. We first describe all the points in the intersection  between $D_k$ and $\xi+C$. Let $I$ be a subset of $k-1$ indices among  $\{1,\ldots, n-1 \}$ and let $\xi_I$ be the corresponding $0$-dimensional subscheme of length $k-1$.  Notice that, as shown in Proposition \ref{prop:mainK3} there exists a finite subfamily $T_\xi$ of cardinality $M$ given by curves in $T$ passing through $\xi_I$. For each  curve $C'\in T_\xi$ and each point $q$ of the $2p-2$ intersection points between $C'$ and $C$ we get an intersection between $D_k$ and $\xi+C$ given by $q+\xi$. All in all we obtain that 
$$
D_k \cdot h = M {{n-1}\choose{k-1}} (2p-2)
$$
from which we deduce that $a=M {{n-1}\choose{k-1}}$. 

Consider $p, x_1,\ldots, x_{n-2}\in S$ general points. Set $\xi=x_1+\ldots +x_{n-2}$ and consider the curve $\P T_p(S)+\xi$ which has class $r_n$. We describe now all the points in the intersection  between $D_k$ and $\P T_p(S)+\xi$. Let $I$ be a subset of $k-2$ indices among  $\{1,\ldots, n-2 \}$ and let $\xi_I$ be the corresponding $0$-dimensional subscheme of length $k-2$. As above there exists a finite subfamily $T_{p+\xi}$ of cardinality $M$ given by curves in $T$ passing through $p+\xi_I$. For each  curve $C'\in T_{p+\xi}$  we get an intersection between $D_k$ and $r_n$ given by $p_{C'}+\xi$, where $p_{C'}$ is the length two $0$-dimensional subscheme supported on $p$ and determined by the tangent direction of $C'$ at $p$. We therefore have 
$$
D_k \cdot r_n = M {{n-2}\choose{k-2}}
$$
from which we deduce that $b=M {{n-2}\choose{k-2}}$. 
Therefore
$$
D_k  = M {{n-2}\choose{k-2}} \frac{1}{(k-1)} \Big{(} (n-1)h - (k-1)\delta_n\Big)
$$
and we are done.
\end{proof}

To account for other polarization types, we proceed as follows. For  $k=2,\ldots, n$
let $\xi' \in S^{[k]}$ be a 0-dimensional non-reduced subscheme corresponding to a ramification point
of a $\mathfrak g^1_k$ on a general curve $\tilde C_t$. Let $\xi \in S^{[n]}$ be a 0-dimensional subscheme obtained by adding $(n-k)$ distinct generic points to $\xi'$. 
Let $\mathbb P^1_\xi$ the exceptional rational curve passing through $\xi$. Consider the 
curve 
$$
R'_k:=R_k \cup \mathbb P^1_\xi
$$
obtained by glueing along $\xi$  the curve $R_k$ corresponding to the $\mathfrak g^1_k$ and the  exceptional rational curve $\mathbb P^1_\xi$. Working with families of rational curves, we obtain as before uniruled divisors $D'_2, \ldots, D'_n$. The divisor $D'_k$ is the union of $D_k$ with the exceptional divisor of $S^{[n]}$. As a direct consequence of Proposition \ref{prop:class1}, we can compute the relevant cohomology classes.

\begin{prop}\label{prop:class2}
Let $k$ be an integer between $2$ and $n$. Then 
\begin{enumerate}
\item The class of $R'_k\subset S^{[n]}$ in $H_2(S^{[n]},\Z)$ is $h-(2k-1)r_n$;
\item The class of $D'_k\subset S^{[n]}$ in $H^2(S^{[n]},\Z)$ is proportional to $(2n-2)h-(2k-1)\delta_n$.
\end{enumerate}
\end{prop}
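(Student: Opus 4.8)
The plan is to deduce both statements directly from Proposition \ref{prop:class1}, by tracking the effect of the extra exceptional component $\mathbb P^1_\xi$ on the classes of $R_k$ and $D_k$. For part (1), the curve $R'_k=R_k\cup\mathbb P^1_\xi$ is a reducible genus $0$ curve, so as a homology class
\[
[R'_k]=[R_k]+[\mathbb P^1_\xi].
\]
By Proposition \ref{prop:class1}(1) we have $[R_k]=h-(2k-2)r_n$, while $\mathbb P^1_\xi$, being a fibre of the Hilbert--Chow morphism through the non-reduced subscheme $\xi$, is an exceptional rational curve. To pin down $[R'_k]=ah-br_n$ I would compute two intersection numbers. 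First, pairing with $h$: since $\mathbb P^1_\xi$ is contracted by the Hilbert--Chow morphism and $h$ is pulled back from the symmetric product, $\mathbb P^1_\xi\cdot h=0$, so $[R'_k]\cdot h=[R_k]\cdot h$ and hence $a=1$, exactly as for $R_k$. Second, pairing with $\delta_n$ and using $r_n\cdot\delta_n=-1$ from \eqref{eq:r.delta}, one has $b=[R'_k]\cdot\delta_n=[R_k]\cdot\delta_n+[\mathbb P^1_\xi]\cdot\delta_n$, where $[R_k]\cdot\delta_n$ is read off from Proposition \ref{prop:class1}(1).

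Everything then comes down to the single number $[\mathbb P^1_\xi]\cdot\delta_n$, that is, to the precise way the exceptional tail sits inside the exceptional divisor $E_n$ at the ramification configuration $\xi$. Determining this contribution — and in particular getting its sign right, so that the coefficient of $r_n$ lands exactly on $2k-1$ — is the step I expect to be the main obstacle. I would settle it by a local analysis of $\mathbb P^1_\xi$ as a Hilbert--Chow fibre through the length-$2$ fat point of $\xi$, being careful about how the node of $R'_k$ at $\xi$ (where the main component $R_k$ crosses $E_n$) interacts with the exceptional direction coming from the ramification point of the $\mathfrak g^1_k$. Once this intersection number is fixed, part (1) follows immediately.

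For part (2), I would avoid adding the reducible divisor classes by hand and instead argue by duality. The divisor $D'_k$ is a uniruled divisor ruled by the stable curves $R'_k$, so by Corollary \ref{cor:divisor}(2) its cohomology class is a positive multiple of the Poincar\'e dual of $[R'_k]$; this is exactly what makes the class proportional to a single ray even though the support $D_k\cup E_n$ is reducible. Given the class $h-(2k-1)r_n$ from part (1), I would then dualise using the identification $r_n=\tfrac{1}{2(n-1)}\delta_n$ in $H^2(S^{[n]},\Q)$ coming from \eqref{eq:embdual}, which turns $h-(2k-1)r_n$ into $\tfrac{1}{2(n-1)}\big((2n-2)h-(2k-1)\delta_n\big)$, i.e.\ a class proportional to $(2n-2)h-(2k-1)\delta_n$, as claimed. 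As a consistency check one may also describe $D'_k$ geometrically as $D_k\cup E_n$, with $[E_n]=2\delta_n$ and $[D_k]$ as in Proposition \ref{prop:class1}(2), and match intersection numbers against the test curves $\xi+C$ and $\mathbb P T_p(S)+\xi$ used in the proof of Proposition \ref{prop:class1}; this reduces part (2) to routine bookkeeping once the curve class in part (1) is established.
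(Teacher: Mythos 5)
Your strategy coincides with the paper's: the printed proof is a one-line appeal to Proposition \ref{prop:class1} plus additivity of homology classes under the decomposition $R'_k=R_k\cup\mathbb P^1_\xi$, and your dualisation via Corollary \ref{cor:divisor}(2) and $r_n=\tfrac{1}{2(n-1)}\delta_n$ for part (2) is the intended mechanism there as well. However, the step you defer --- the value of $[\mathbb P^1_\xi]\cdot\delta_n$, which you call the main obstacle and propose to settle by a local analysis --- is not open at all, and settling it exposes a real problem. The curve $\mathbb P^1_\xi$ is a fibre of the Hilbert--Chow morphism, and the paper has already fixed the class of such a fibre: it is $r_n$, with $r_n\cdot\delta_n=-1$ by (\ref{eq:r.delta}) (equivalently, $E_n$ restricts to a Hilbert--Chow fibre with degree $-2$). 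No analysis of the node of $R'_k$ at $\xi$, or of the length-$2$ fat point, can change this: the classes of the two components simply add.

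Carrying your computation out therefore gives $b=[R_k]\cdot\delta_n+[\mathbb P^1_\xi]\cdot\delta_n=(2k-2)-1=2k-3$, i.e. $[R'_k]=h-(2k-3)r_n$, not $h-(2k-1)r_n$; the value $[\mathbb P^1_\xi]\cdot\delta_n=+1$ that the stated coefficient would force is geometrically impossible, since attaching the exceptional tail can only lower the degree on $E_n$ (a convention-free check: $[R_k]\cdot E_n=4k-4$ from the ramification points, $[\mathbb P^1_\xi]\cdot E_n=-2$, so $[R'_k]\cdot E_n=4k-6\neq 2(2k-1)$). So your method is correct but cannot reproduce the statement as printed: what it proves is the proposition with $2k-3$ throughout, or equivalently that the class $h-(2k-1)r_n$ is realised by $R_{k+1}\cup\mathbb P^1_\xi$ rather than by $R_k\cup\mathbb P^1_\xi$. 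This is an indexing slip in the statement rather than a defect of your approach: with the coefficient $2k-3$ the odd classes $h-(2g-1)r_n$ are produced by a $\mathfrak g^1_{g+1}$ plus an exceptional tail under exactly the Severi-variety condition $p\geq g$ invoked in Theorem \ref{thm:precise}, and the class $h-r_n$ (the case $g=1$, $\epsilon=1$) is obtained from $k=2$, whereas with $2k-1$ and the constraint $k\geq 2$ that class is never realised. Your part (2), being a formal dualisation, is fine as written and simply transports whichever coefficient comes out of part (1).
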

 
 We now move on to the result we alluded to in  Remark \ref{rmk:precisestate} the introduction.
 
 \begin{thm}\label{thm:precise}
 Let $(X,H)$ be a polarized holomorphic symplectic variety of $K3^{[n]}$-type.
Suppose there exist 
integers $p,g$ and $\epsilon$ such that $p\geq g$ and $\epsilon =0$ or $1$ such that the following two conditions hold:
\begin{enumerate}
\item[(i)] the class $\alpha:=\frac{H^\vee}{\div(H)}\in H_2(X,\mathbb Z)$ can be written as $\gamma +(2g-\epsilon)\eta$, with 
$\eta$ in the monodromy orbit of the class of the exceptional curve on a $K3^{[n]}$ and $\gamma\in \eta^\perp$;
\item[(ii)] $q(\gamma)=2p-2$ (hence $q(\alpha)=2p-2-\frac{(2g-\epsilon)^2}{2n-2}$).
\end{enumerate}
Then there exists an integer $m>0$ such that the linear system $|mH|$ contains a uniruled divisor covered by rational curves of primitive class
equal to $\alpha$.

 \end{thm}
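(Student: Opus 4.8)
The plan is to transport the question to the Hilbert scheme $S^{[n]}$ of a carefully chosen $K3$ surface, where Subsection \ref{ss:examples} already produces uniruled divisors with the required curve class, and then to deform the ruling curve back to $X$ by means of the deformation theory of Section 3. Two preliminary observations are in order. First, $\alpha=H^\vee/\div(H)$ is a primitive class of $H_2(X,\Z)$, by the discussion following (\ref{eq:nonlin}). Second, $q(\alpha)=q(H)/\div(H)^2>0$ because $H$ is ample, so the quantity $2p-2-\frac{(2g-\epsilon)^2}{2n-2}$ appearing in (ii) is positive; in particular $2p-2>0$, so $p\geq 2$ and $q(\gamma)=2p-2>0$.

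First I would put $\alpha$ into a standard form. By condition (i), $\eta$ lies in the monodromy orbit of the exceptional curve class $r_n$, so, invoking the global Torelli theorem together with Markman's determination of the monodromy group (Section 2), a monodromy operator realized by a polarized deformation carries $\eta$ to $r_n$ and $\gamma$ into the sublattice $r_n^\perp=H_2(S,\Z)$. After this reduction $\alpha$ takes the form $\gamma'-(2g-\epsilon)r_n$ with $\gamma'\in H_2(S,\Z)$ and $q(\gamma')=2p-2$. Writing $\gamma'=c\,\gamma''$ with $\gamma''$ primitive, I would take a general $K3$ surface $S$ with $\Pic(S)=\Z h_S$, $h_S$ ample of square $(2p-2)/c^2$, so that $L:=c\,h_S$ has class $\gamma'$ and arithmetic genus $p_a(L)=p$. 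Matching these lattice data via Proposition \ref{proposition:deformation-criterion}, exactly as in the proof of Corollary \ref{cor:control-of-polarization-dual}, yields a polarized deformation of $(X,H)$ to $(S^{[n]},H')$ carrying $\alpha$ to
$$\beta:=L-(2g-\epsilon)r_n\in H_2(S^{[n]},\Z).$$
The crucial input is that conditions (i)--(ii) determine not merely $q(\alpha)$ but the full monodromy orbit of $\alpha$ (cf. Remark \ref{rmk:precisestate}), which is precisely that of $\beta$.

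Next I would realize $\beta$ as a ruling class on $S^{[n]}$ and transfer the conclusion back to $X$. Since $p\geq g$, an irreducible component of the Severi variety of nodal genus $g$ curves in $|L|$ is non-empty by \cite{Chen02}. When $\epsilon=0$, Proposition \ref{prop:class1} together with Remark \ref{rmk:ndiv} produces a uniruled divisor ruled by a rational curve of class $L-2g\,r_n=\beta$; when $\epsilon=1$, Proposition \ref{prop:class2} does the same with a rational curve of class $L-(2g-1)r_n=\beta$. In either case $\beta$ is primitive, being the parallel transport of the primitive class $\alpha$, and it is the class of a ruling genus $0$ stable curve on $S^{[n]}$. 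Now let $\mathcal X\to B$ be the family furnished by the reduction step, with $S^{[n]}$ and $X$ among its fibres, and let the flat section of $R^{4n-2}\pi_*\Z$ determined by $\beta\leftrightarrow\alpha$ play the role of the class $\alpha$ in Corollary \ref{cor:divisor}; by construction $B$ lies in the associated Noether--Lefschetz locus. Since $\beta$ is a ruling class, Corollary \ref{cor:divisor} shows that the stable map deforms over a finite cover of $B$ and that every fibre---in particular $X$ itself---carries a uniruled divisor whose cohomology class is a positive multiple of $\alpha^\vee$, all of whose components are ruled by a curve of class $\alpha$. As $\alpha=H^\vee/\div(H)$, the class $\alpha^\vee$ is proportional to $H$; hence this effective divisor has class a positive multiple of $c_1(H)$ and lies in $|mH|$ for a suitable $m>0$, and it is covered by rational curves of the primitive class $\alpha$, as required.

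The hard part is the reduction step: one must match the \emph{exact} integer coefficient $2g-\epsilon$, rather than merely its residue modulo $2n-2$, so that the deformed curve class is literally of the shape handled by Propositions \ref{prop:class1}--\ref{prop:class2}. This is exactly what the orthogonal decomposition $\alpha=\gamma+(2g-\epsilon)\eta$ of (i), combined with the square computation of (ii), is designed to guarantee. Checking that these data pin down the monodromy orbit of $\alpha$---and therefore that the required polarized deformation to $(S^{[n]},H')$ with $\alpha\mapsto\beta$ exists---is the technical core of the argument and rests on Markman's monodromy computations recalled in Section 2.
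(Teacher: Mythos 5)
Your proposal is correct and follows essentially the same route as the paper's proof: a polarized deformation of $(X,\alpha)$ to $(S^{[n]},\beta)$ via the lattice-theoretic machinery of Section 2 (the paper simply invokes Corollary \ref{cor:control-of-polarization-dual}, which you re-derive inline from Proposition \ref{proposition:deformation-criterion}), then identification of $\beta$ with the class of the ruling curves $R_k$ or $R'_k$ from Propositions \ref{prop:class1}--\ref{prop:class2}, then Corollary \ref{cor:divisor} to deform the ruling back to $X$. The only cosmetic difference is that you keep a possibly imprimitive class $\gamma'=c\gamma''$ and use the Severi-variety examples in $|c\,h_S|$, whereas the paper's Corollary \ref{cor:control-of-polarization-dual} transports $\alpha$ to a class $h_S-\mu r_n$ with primitive $K3$-part, so the examples with $m=1$ suffice; both variants are covered by Subsection \ref{ss:examples}.
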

 \begin{proof}
% Let $X$ be a projective holomorphic symplectic variety of $K3^{[n]}$-type.
%, and let $h$ be an ample class on $X$. 
Let $\alpha\in H_2(X, \Z)$ as in the statement of the theorem. %\francois{So we need that the condition of the Remark after the theorem is satisfied as well ?}. %be the dual of $h$ with respect to the Beauville-Bogomolov form. 
By Corollary \ref{cor:control-of-polarization-dual}, there exists a $K3$ surface $S$ as above such that the pair $(X, \alpha)$ is deformation equivalent to $(S^{[n]}, [R_k])$ for some $k$ between $1$ and $n$, or to $(S^{[n]}, [R'_k])$ for some $k$ between $2$ and $n$ (with $R_k$ and $R'_k$ as in Proposition \ref{prop:class1} and \ref{prop:class2}). Notice that being deformation equivalent means that $(X,\alpha)$ and $(S^{[n]}, R_k)$ (or $(S^{[n]}, [R'_k])$) are connected by a family $f: \mathcal X\to B$, where B is an irreducible curve, so that the parallel transport brings $\alpha$ to $[R_k]$ (resp. $[R'_k]$). 

Corollary \ref{cor:divisor} shows indeed that $X$ contains a uniruled divisor with class a multiple of $h$ and the theorem now follows.
\end{proof}
 
 \subsection{Finiteness of the exceptions and proof of the main theorem}\label{ss:exceptions}
In this paragraph we prove that, for every dimension, there is at most a finite number of components of the moduli space of polarized irreducible holomorphic symplectic manifolds $(X,H)$ of \kntiposp %or of generalized Kummer type 
where the strategy of the previous sections does not work.  Together with Theorem \ref{thm:precise} this will conclude the proof of Theorem \ref{thm:main}.

The uniruled divisors we constructed in the previous paragraph  have class $h_S-(2g)r_n$ (or $h_S-(2g-1)r_n$), %in the \kntiposp case, %and $\Theta -(2g)\tau$ (or $\Theta-(2g-1)\tau$) in the generalized Kummer case, 
with $2p-2=h_S^2$ and $h_S$ the primitive polarization on the $K3$ surface. We have the following:
\begin{prop}\label{prop:curvegrandi}
Let $C$ be a primitive class of a curve on a manifold of \kntiposp such that its square $C^2$ with respect to the Beauville-Bogomolov form is $>0$. 
If $q(C)\geq n-1$, then $C$ is deformation equivalent to the class of one of the curves constructed in the previous section. 
\end{prop}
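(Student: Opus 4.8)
The plan is to reduce $C$ to an explicit normal form on a Hilbert scheme and then match that normal form against the list of curve classes $R_k$ and $R'_k$ produced in Propositions \ref{prop:class1} and \ref{prop:class2}. First I would apply Corollary \ref{cor:control-of-polarization-dual} to $C$: since $C$ is primitive with $q(C)>0$, there is a $K3$ surface $S$ with a primitive polarization $h_S$ and an integer $\mu\in[0,n-1)$ such that $(X,C)$ is deformation equivalent to $(S^{[n]},\,h_S-\mu r_n)$. Because $h_S\perp r_n$ and $q(r_n)=-\tfrac{1}{2(n-1)}$, taking Beauville--Bogomolov squares gives
$$q(C)=h_S^2-\frac{\mu^2}{2(n-1)},\qquad\text{hence}\qquad h_S^2=q(C)+\frac{\mu^2}{2(n-1)}\ \ge\ q(C)\ \ge\ n-1.$$
Thus the hypothesis $q(C)\ge n-1$ says precisely that the underlying $K3$ polarization in the normal form has large square.

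Next I would record the two features of the constructed classes that make the matching work. The class of $R_k$ is $h-2(k-1)r_n$ for $k=1,\dots,n$, and that of $R'_k$ is $h-(2k-1)r_n$ for $k=2,\dots,n$, where in each case $h^2=2p-2$ for a primitive $K3$ polarization $h$ subject to the genus bound $p\ge k-1$. Reducing the $r_n$-coefficients modulo $2n-2$, the even coefficients $2(k-1)$ run through every even residue and the odd coefficients $2k-1$ run through every odd residue, the value $2n-1\equiv 1$ filling the only gap at the residue $1$; so every residue modulo $2n-2$ is realized. Moreover, for a fixed coefficient the square $2p-2$ may be taken arbitrarily large, so the square of the constructed class can be made equal to any prescribed value in the correct rational congruence class.

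With these in hand, the matching goes as follows. Choose a constructed coefficient $j\equiv\pm\mu\pmod{2n-2}$, taking $j=\mu$ when $\mu$ is even or when $\mu\ge 3$ is odd, and $j=2n-1$ in the remaining case $\mu=1$. Set the $K3$ polarization square of the corresponding constructed class to $h^2=q(C)+\tfrac{j^2}{2(n-1)}$; a short congruence computation using $j\equiv\pm\mu$ shows this is a positive even integer, and the inequality $h^2\ge q(C)\ge n-1$ forces the genus bound $p\ge k-1$ in every case. The resulting curve $R$ (an $R_k$ or $R'_k$) is primitive with $q(R)=q(C)>0$ and the same discriminant class as $C$, so applying Corollary \ref{cor:control-of-polarization-dual} to $R$ deforms it to $h''-\mu r_n$ with $(h'')^2=h_S^2$; this pair is deformation equivalent to $(S^{[n]},h_S-\mu r_n)$ (two $K3$ surfaces carrying a primitive polarization of equal square being deformation equivalent), and hence to $(X,C)$.

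The main obstacle, and the point demanding real care, is the bookkeeping of the $r_n$-coefficients modulo $2n-2$: one must check that the geometric constructions of Subsection \ref{ss:examples} reach every residue class -- in particular that the residue $1$ is attained only through $R'_n$ via $2n-1\equiv 1$ -- and that $q(C)\ge n-1$ is exactly strong enough to guarantee the genus bound $p\ge k-1$ in the extremal residues (the worst case being $\mu$ near $n-1$, equivalently $h^2$ only barely exceeding $q(C)$). Everything else is a routine translation between curve classes and their Beauville--Bogomolov squares.
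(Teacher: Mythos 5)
Your proposal is correct, and its skeleton is the same as the paper's: reduce $C$ to the normal form $h_S-\mu r_n$, $\mu\in[0,n-1)$, via Corollary \ref{cor:control-of-polarization-dual}, then use $q(C)\ge n-1$ to secure the genus bound required by the constructions of Propositions \ref{prop:class1} and \ref{prop:class2}. But the execution genuinely differs. The paper argues directly on the normal form: writing $\mu=2g-\epsilon$, it assumes $p<g$ and obtains $n-1\le q(C)=2(p-1)-\tfrac{\mu^2}{2(n-1)}<2(g-1)-\tfrac{\mu^2}{2(n-1)}\le n-2$, a contradiction; hence $p\ge g$ and the normal form is itself a constructed class (after moving to a general $K3$ of the same degree). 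You instead run a matching argument: pick a constructed coefficient $j\equiv\pm\mu\pmod{2n-2}$, build a constructed curve $R$ with $q(R)=q(C)$ and the same discriminant class, and identify the two normal forms by applying the Corollary a second time. That identification uses more than the Corollary gives: it needs the invariance of the square \emph{and} of the discriminant class up to sign under parallel transport (Markman's criterion, available in the paper -- it is what Remark \ref{rmk:nozal} invokes -- but not needed in the paper's own proof of this proposition). What your detour buys is an honest treatment of the residue $1$: taking Proposition \ref{prop:class2} at face value, the odd coefficients available are $3,5,\dots,2n-1$, so $\mu=1$ is reached only through $R'_n$ via $2n-1\equiv 1\pmod{2n-2}$, a point the paper's proof silently glosses over.

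One justification in your write-up is wrong as stated, although its conclusion is true. You assert that ``$h^2\ge q(C)\ge n-1$ forces the genus bound $p\ge k-1$ in every case''. In the case $\mu=1$, $j=2n-1$, $k=n$, the required bound is $p\ge n-1$, whereas $h^2\ge n-1$ only yields $p\ge (n+1)/2$. The bound does hold, but only because of the size of $j$: your own formula gives $h^2=q(C)+\tfrac{(2n-1)^2}{2(n-1)}=q(C)+2n+\tfrac{1}{2(n-1)}$, hence $p>n+1$ -- in fact this case needs no lower bound on $q(C)$ at all. For the same reason your closing sharpness remark is off: the delicate case is not ``$\mu$ near $n-1$'' (there the slack is large), and $q(C)\ge n-1$ is not ``exactly strong enough'' -- with your matching $j=\mu$ one only needs roughly $q(C)\ge (n-5)/2$, consistent with the paper's Remark \ref{rmk:npiccolo}, which notes that the bound in this proposition is far from sharp.
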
 
\begin{proof}
We know by Corollary \ref{cor:control-of-polarization-dual} that $C$ is deformation equivalent to either $h_S-2gr_n$ or $h_S-(2g-1)r_n$, with $2g\leq n-1$ (resp. $2g\leq n$). If $p$ were $<g$, 
then, in both cases, we would have
$$
n-1 \leq q(C)=%q(h_S) + q(2g r_n)= 
q(h_S) -4g^2 \frac{1}{2(n-1)} =  2(p-1) -4g^2 \frac{1}{2(n-1)}< 2(g-1)-4g^2 \frac{1}{2(n-1)}\leq n -2
$$ 
which is a contradiction.
\end{proof}
%\begin{cor}\label{cor:finite}
%Let $\mathfrak{M}$ be the moduli space of all polarized irreducible holomorphic symplectic manifolds of \kntipo, of fixed dimension $2n$. Then, the number of components of $\mathfrak{M}$ whose general points $(X,h)$ do not have a uniruled divisor ruled by a rational curve of primitive class is at most finite.
%\end{cor}
\begin{proof}[Proof of Theorem \ref{thm:main}]
The components of $\mathfrak{M}$ are in bijective correspondence with the monodromy orbits of a given class of positive square in $H^2(X,\mathbb{Z})$, see \cite[Cor. 2.4]{Apo}. 
 For a fixed square of $h$, there is a finite number of orbits (cf. \cite[Proposition 1.2]{GritskenkoHulekSankaran10}), so it follows that if $X$ has a uniruled divisor when $q(h)$ is big enough, our claim will hold.  Let $C$ be a curve class in $H_2(X,\mathbb{Z})$ such that $C=h/div(h)$ under the map
(\ref{eq:dualset}).
 %, where $div(H)$ is the divisibility of $H$ which is the positive generator of the ideal $q(H,H^2(X,\mathbb{Z}))$. 
 The divisibility of $h$ is at most $2n-2$, therefore if $q(h)\geq (2n-2)^2(n-1)$ the curve class $C$ has square at least $n-1$, so that Proposition \ref{prop:curvegrandi} applies and both items of the theorem follow from Theorem \ref{thm:precise}.
\end{proof}
\begin{cor}\label{cor:rho2}
Let $X$ be a projective irreducible holomorphic symplectic variety of \kntiposp with Picard rank at least two. Then $X$ has an ample  divisor ruled by primitive rational curves.
\end{cor}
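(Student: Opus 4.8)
The plan is to reduce the corollary to the numerical bound already used in the proof of Theorem~\ref{thm:main}, and then to exploit $\rho(X)\ge 2$ to produce an ample class of large Beauville--Bogomolov square. Recall that for a primitive ample class $H\in H^2(X,\Z)$ the dual curve class $\alpha=H^\vee/\div(H)\in H_2(X,\Z)$ satisfies $q(\alpha)=q(H)/\div(H)^2$, and that $\div(H)$ divides $2n-2$; hence $q(\alpha)\ge q(H)/(2n-2)^2$, so the inequality $q(H)\ge (2n-2)^2(n-1)$ forces $q(\alpha)\ge n-1$. Once this holds, Remark~\ref{rmk:sufficient}(i) (equivalently Proposition~\ref{prop:curvegrandi} combined with Theorem~\ref{thm:precise}) furnishes, for some $m>0$, a uniruled divisor in $|mH|$ covered by rational curves of primitive class $\alpha$. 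Since such a divisor has class $mH$ with $m>0$ and $H$ ample, it is itself ample, which is exactly the assertion. Thus the whole statement reduces to exhibiting a single primitive ample class $H$ with $q(H)\ge (2n-2)^2(n-1)$.

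The construction of such an $H$ is the only substantive point, and it is here that the hypothesis $\rho(X)\ge 2$ is used. First I would choose two linearly independent ample classes $A,B\in NS(X)$; these exist because the ample cone $\Amp(X)$ is a nonempty open cone of dimension $\rho(X)\ge 2$. The Beauville--Bogomolov form restricts to $NS(X)$ with signature $(1,\rho-1)$, so by the reverse Cauchy--Schwarz (Hodge index) inequality applied to the timelike classes $A,B$ one has $q(A,B)^2>q(A)q(B)$, whence the Gram determinant is negative and the saturation $M\subset NS(X)$ of $\langle A,B\rangle$ is a rank-two lattice of signature $(1,1)$ meeting $\Amp(X)$ in a genuine two-dimensional open subcone. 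In such a hyperbolic plane one finds primitive vectors of arbitrarily large positive square inside any prescribed open subcone: along a sequence of rational rays converging to a fixed ample interior ray one selects primitive integral representatives whose squares tend to infinity (the model case being $q(ax+by)=2ab$ with $a,b$ coprime and $a/b\to 1$). Because $M$ is saturated in $NS(X)$, and $NS(X)$ is saturated in $H^2(X,\Z)$, a vector primitive in $M$ is primitive in $H^2(X,\Z)$; and for large enough index in the sequence it lies in $\Amp(X)$. This produces a primitive ample $H$ with $q(H)$ as large as desired, in particular $q(H)\ge (2n-2)^2(n-1)$.

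Combining the two steps proves the corollary. I expect the main obstacle to be the elementary lattice-theoretic claim that a rank-two lattice of signature $(1,1)$ contains primitive vectors of unbounded square inside a fixed open subcone of its positive cone, together with the verification that the chosen representatives actually remain in the ample cone. The essential role of $\rho(X)\ge 2$ is precisely to guarantee such a hyperbolic plane inside $NS(X)$: when $\rho(X)=1$ there is a unique primitive ample class up to sign, its square is fixed, and the argument — and, by \cite{zal}, the conclusion itself — may fail.
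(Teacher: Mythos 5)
Your proof is correct and follows essentially the same route as the paper: the paper likewise picks a primitive ample class $h$ with $q(h)\ge (2n-2)^2(n-1)$, deduces $q(C)\ge n-1$ for the dual primitive curve class $C = h/\div(h)$ using $\div(h)\mid 2n-2$, and concludes via Proposition \ref{prop:curvegrandi} as in the proof of Theorem \ref{thm:main}. The only difference is that you spell out the hyperbolic-lattice argument producing primitive ample classes of arbitrarily large square, which the paper compresses into the single assertion that the Picard lattice is indefinite and hence ``contains primitive elements of positive arbitrary Beauville--Bogomolov square,'' the same holding for ample classes.
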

\begin{proof}
Since $X$ is projective and has Picard rank at least two, its Picard lattice is indefinite and contains primitive elements of positive arbitrary Beauville-Bogomolov square. The same holds for ample classes. Let $h$ be an ample divisor such that $q(h)\geq (2n-2)^2(n-1)$. Let $C$ be a curve class in $H_2(X,\mathbb{Z})$ such that $C=h/div(h)$ under the map
(\ref{eq:dualset}). As in the proof of Theorem \ref{thm:main} it follows that $q(C)\geq n-1$ and Proposition \ref{prop:curvegrandi} yields our claim.
\end{proof}

\begin{rmk}\label{rmk:npiccolo}
{\em The estimate of Proposition \ref{prop:curvegrandi} is definitely not sharp, indeed  all primitive curves of positive square on irreducible holomorphic symplectic manifolds of \kntiposp with $n\leq 7$ are deformation equivalent to the curves we construct. Indeed, by Corollary \ref{cor:control-of-polarization-dual} we can suppose that our pair is $(S^{[n]},h_S-\mu r_n)$ with $0\leq \mu\leq n-1$ and $S$ is a K3 of genus $p$. This curve is constructed from a curve of class $h_S-2g r_n$ with the eventual addition of a tail of class $r_n$, so that $2g\leq n$. Let us suppose that $g>p$ and $n\leq 7$. We have 
$$
 q(h_S-2g r_n)=2p-2-2\frac{g^2}{n-1}\leq 2p-2-2\frac{(p+1)^2}{n-1}\leq 2p-2-2\frac{(p+1)^2}{6}.
$$
However, the last value is never positive, hence $q(h_S-2g r_n)$ can only be negative. Analogously, for $C=h_S-(2g-1)r_n$, we have $q(C)\leq \frac{20p-25-4p^2}{12}$ with $g\geq p+1$ and $2g\leq n$, which is again not positive.  }
\end{rmk}

 %%%%%%%%%%%%%%%%%%%%%%%%%%%%
 %
 \section{Comparison with the work of Oberdieck, Shen  and  Yin and further results}\label{s:k3no}
%
%%%%%%%%%%%%%%%%%%%%%%%%%%%%
In this section we compare our results with those of Oberdieck, Shen  and  Yin \cite[Corollary A.3]{zal}. 
We show that conditions (i) and (ii) in Theorem \ref{thm:precise} are precisely satisfied in {\it all the cases} where the obstruction discovered by \cite{zal} does not prevent
uniruled divisors covered by primitive rational curves to exist. In other words
we show that our result is sharp.
We discuss how the counterexamples to the existence of uniruled divisors covered by rational curves of primitive class propagate in higher dimensions. 
Moreover, up to a relatively high value of the dimension ($2n=26$), we show that the existence of uniruled divisors can nevertheless be obtained via 
 non-primitive rational curves. Finally in some of the ``exceptional cases'' (those where existence of uniruled divisors covered by rational curves of primitive class is excluded)
we show that the codimension of the locus covered by the primitive rational curves is two.

 \subsection{Comparison with the work of Oberdieck, Shen  and  Yin}\label{ss:k3no}

To conclude this subsection let us recall the condition in \cite[Corollary A.3]{zal}:
\begin{prop}[\cite{zal}]\label{prop:zalcondition}
Let $\beta$ be a curve class on a manifold $X$ of \kntipo. Then there is a uniruled divisor swept out by $\beta$ if 
\begin{align}
 \beta^2 &= -2+\sum^{n-1}_i 2d_i-\frac{1}{2n-2}(\sum^{n-1}_i r_i)^2,\\
[\beta] &=\pm [\sum{r_i}],\\
4d_i-r_i^2& \geq 0.
\end{align}
Here $[\beta]$ denotes the class of $\beta$ seen as an element of the discriminant group 
$H^2(X,\mathbb{Z})^\vee/H^2(X,\mathbb{Z})$ (with a generator of square $-1/(2n-2)$ which is in the same monodromy orbit of the class $r_n$ of exceptional lines on $S^{[n]}$ for any K3 surface $S$). %\color{red} questa ipotesi ÃÂÃÂ¡ quella mancante in oberdieck e gli altri, perchÃÂÃÂ¡ il generatore del gruppo discriminante non ÃÂÃÂ¡ determinato in modo univoco dal suo quadrato \color{black}). 
The converse holds if $\beta$ is irreducible. 
\end{prop}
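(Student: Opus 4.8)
The plan is to recast the existence of a uniruled divisor swept out by $\beta$ as the nonvanishing of a reduced genus-$0$ Gromov--Witten-type count on $X$, and then to evaluate that count. Since both sweeping out a uniruled divisor by rational curves of class $\beta$ and the associated reduced virtual count are invariant along the Noether--Lefschetz locus of $\beta$ — by the same deformation mechanism underlying Proposition~\ref{prop:defcurves} and Corollary~\ref{cor:divisor} — I would first reduce to the model $X=S^{[n]}$ for a projective $K3$ surface $S$ of small Picard rank. There the class lives in $H_2(S^{[n]},\mathbb{Z})=H_2(S,\mathbb{Z})\oplus\mathbb{Z}r_n$ and can be written $\beta=\gamma-(\sum_i r_i)\,r_n$, where the horizontal part has $\gamma^2=-2+\sum_i 2d_i$ and the fiber degree $\sum_i r_i$ determines the discriminant class; this matches the displayed square formula and the identity $[\beta]=\pm[\sum_i r_i]$.

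Second, I would compute the reduced invariant using the Gromov--Witten theory of $S^{[n]}$ developed by Oberdieck--Pandharipande. The indices $(d_i,r_i)_{i=1}^{n-1}$ record a decomposition of $\beta$ into $n-1$ elementary pieces, each contributing a local reduced count; the per-block inequality $4d_i-r_i^{2}\ge 0$ is precisely the effectivity/positivity condition guaranteeing each local count is nonzero — the analogue, in this setting, of Yau--Zaslow positivity on a $K3$ and of the Brill--Noether existence of the $\mathfrak g^1_k$ exploited in Section~\ref{ss:examples}. Modularity (the generating series being a quasi-Jacobi form) together with positivity of its Fourier coefficients then forces the total invariant to be nonzero under the three numerical conditions of Proposition~\ref{prop:zalcondition}, and nonvanishing of the reduced count yields the sought uniruled divisor. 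For the converse, when $\beta$ is irreducible a uniruled divisor swept out by $\beta$ makes the family of its rulings dominate the divisor in the expected dimension, exactly as in Corollary~\ref{cor:divisor}, so the reduced count is nonzero; running the evaluation backwards then pins down the numerical data.

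The hard part will be the explicit evaluation and positivity of the reduced genus-$0$ invariants of $S^{[n]}$: establishing the modularity, isolating the precise combinatorial dependence on $(d_i,r_i)$, and proving the coefficients do not vanish. A second, more delicate source of difficulty — already flagged by the remark accompanying the statement — is the bookkeeping of the discriminant-group class: the value $-1/(2n-2)$ does not single out a generator of $H^2(X,\mathbb{Z})^\vee/H^2(X,\mathbb{Z})$, so one must consistently track the monodromy orbit of the exceptional class $r_n$ throughout the reduction and the computation, lest the identification $[\beta]=\pm[\sum_i r_i]$ be spoiled by an automorphism of the discriminant form.
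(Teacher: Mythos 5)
You are not actually competing against a proof in this paper: Proposition~\ref{prop:zalcondition} is imported verbatim from \cite{zal} and used as a black box (the only thing proved here about it is the lattice-theoretic reformulation in Proposition~\ref{prop:equivalent}). Judged on its own, your sketch follows the right general strategy --- reduce to $S^{[n]}$ along the Noether--Lefschetz locus (legitimate, via Corollary~\ref{cor:control-of-polarization-dual} and Corollary~\ref{cor:divisor}) and then invoke the Gromov--Witten theory of Hilbert schemes of points --- but both load-bearing steps are asserted rather than proved. The first gap is the bridge between uniruled divisors and reduced invariants. The implication ``nonvanishing reduced count $\Rightarrow$ uniruled divisor'' can indeed be run through the mechanism of Proposition~\ref{prop:defcurves} and Corollary~\ref{cor:divisor}. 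But the implication you need for the last sentence of the statement --- ``uniruled divisor swept by irreducible $\beta$ $\Rightarrow$ nonvanishing reduced count'' --- is exactly the delicate one: a virtual count can vanish even when actual curves exist, by cancellation among multiple covers and reducible contributions. Your sketch disposes of this in one sentence by citing Corollary~\ref{cor:divisor}, which says nothing about virtual counts; this is where the irreducibility hypothesis must do real work, and your argument never uses it beyond naming it.

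The second gap is the evaluation. The claim that ``modularity together with positivity of Fourier coefficients forces the total invariant to be nonzero'' is false as a blanket principle: the relevant coefficients of the quasi-Jacobi forms computing the genus-$0$ theory of $S^{[n]}$ \emph{do} vanish precisely when the three numerical conditions fail --- that vanishing is the source of the counterexamples of \cite{zal} to the original claim of \cite{CP} (cf.\ Remark~\ref{rmk:nozal}), so no positivity statement can hold unconditionally. What is actually required is the fine structure of the invariant as a sum over decompositions $\beta=\sum_i\beta_i$ into $n-1$ blocks, each block contributing a Jacobi-form coefficient supported exactly on the discriminant range $4d_i-r_i^2\ge 0$; establishing that structure is a substantial theorem (Oberdieck's computation of the Gromov--Witten theory of $S^{[n]}$), not a step one can gesture at. Finally, note that the ``if'' direction needs none of this machinery: it already follows from results internal to this paper, namely Proposition~\ref{prop:equivalent} (the three conditions are equivalent to $p\ge g$ for a representative class $h_S-(2g-\epsilon)r_n$) combined with the Brill--Noether construction of Theorem~\ref{thm:precise}. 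A correct write-up would therefore split the statement: sufficiency by the elementary lattice-plus-Severi-variety argument, and only the converse by the genuine Gromov--Witten input of \cite{zal}, with irreducibility invoked specifically to exclude cancellation in the reduced count.
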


\begin{prop}\label{prop:equivalent}
Let $n,g>0$ be integers such that $2g\leq n$. Let $p$ be an integer number. 
The condition $p\geq g$ for a curve of class $h_S-2g r_n$ with $h_S^2=2p-2$,  on an Hilbert scheme $S^{[n]}$ is equivalent to the conditions in Proposition \ref{prop:zalcondition}.
\end{prop}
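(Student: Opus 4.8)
The plan is to make the comparison purely arithmetic, by translating both sides into constraints on the integers $n,p,g$ through the Beauville--Bogomolov form, and then checking that the two systems cut out the same region. First I would compute the two invariants attached to $\beta=h_S-2g\,r_n$. Since $h_S\perp r_n$, $q(h_S)=2p-2$ and $q(r_n)=-\tfrac1{2n-2}$, one gets
\[
q(\beta)=2p-2-\frac{(2g)^2}{2n-2}.
\]
Moreover $r_n$ generates the discriminant group $H^2(X,\Z)^\vee/H^2(X,\Z)\simeq \Z/(2n-2)\Z$, so the class of $\beta$ there is $[-2g\,r_n]$, i.e. the residue $-2g\bmod (2n-2)$. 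Feeding this into Proposition~\ref{prop:zalcondition}, its three conditions become: there exist integers $r_1,\dots,r_{n-1}$ and non-negative integers $d_1,\dots,d_{n-1}$ such that, with $R=\sum_i r_i$ and $D=\sum_i d_i$,
\[
R\equiv \pm 2g \pmod{2n-2},\qquad 4d_i-r_i^2\ge 0\ (\forall i),\qquad p=D+\frac{4g^2-R^2}{4(n-1)}.
\]
Thus the proposition reduces to the statement that such data exist \emph{if and only if} $p\ge g$; equivalently, that the minimum of $p$ over all admissible $(d_i,r_i)$ equals $g$.

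For the implication $p\ge g\Rightarrow$(OSY) I would simply exhibit the data. Using $g\le n-1$ (which follows from the hypothesis $2g\le n$), set $(r_i,d_i)=(2,1)$ for $1\le i\le g-1$, $(r_g,d_g)=(2,\,p-g+1)$, and $(r_i,d_i)=(0,0)$ for $i>g$. Then $R=2g$, $D=p$, every inequality $4d_i-r_i^2\ge 0$ holds (the only nonzero defect being $4(p-g)\ge 0$ at $i=g$), and the displayed identity for $p$ holds with $R=2g$. This realises the minimum $p=g$ and matches exactly the classes $h_S-2g\,r_n$ of the Brill--Noether curves constructed in the previous section.

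For the converse I would rewrite the target $p\ge g$ as $4(n-1)D-R^2\ge 4g(n-1-g)$, and (replacing all $r_i$ by $-r_i$ if needed) assume $R\equiv 2g\pmod{2n-2}$. The key point is an integrality refinement: for $d_i\in\Z_{\ge 0}$ and $r_i\in\Z$, the inequality $4d_i\ge r_i^2$ forces $4d_i\ge r_i^2+3\epsilon_i$, where $\epsilon_i=1$ if $r_i$ is odd and $0$ otherwise. Combined with the Lagrange identity $(n-1)\sum_i r_i^2-R^2=\sum_{i<j}(r_i-r_j)^2$, this yields
\[
4(n-1)D-R^2\ \ge\ \sum_{i<j}(r_i-r_j)^2+3(n-1)\sum_i\epsilon_i .
\]
The heart of the matter is the \emph{even} case $r_i=2s_i$: there the congruence reads $\sum_i s_i\equiv g\pmod{n-1}$, and a direct computation shows that the most-equal distribution ($g$ of the $s_i$ equal to $q+1$ and the remaining $n-1-g$ equal to $q$, where $\sum_i s_i=(n-1)q+g$) minimises $\sum_{i<j}(s_i-s_j)^2$ and gives exactly $4g(n-1-g)$, \emph{independently of the representative $q$}. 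Hence $p\ge g$ with equality, matching the forward construction, and odd entries only enlarge the right-hand side via the defect term.

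The step I expect to be the main obstacle is precisely this converse minimisation. The naive continuous estimate coming from $4d_i\ge r_i^2$ (Cauchy--Schwarz) only gives $p\ge g^2/(n-1)$, which is strictly weaker than $p\ge g$; so one must genuinely exploit the integrality defect $3\epsilon_i$ together with the congruence $R\equiv 2g$. Controlling \emph{all} parity patterns and \emph{all} representatives $R=2g+2(n-1)k$ at once — not merely the minimal one $R=2g$ — is the delicate combinatorial point, which I would settle by the most-equal analysis above, verifying that introducing odd $r_i$ or increasing $|k|$ never pushes the value below $g$.
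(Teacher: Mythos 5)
Your proposal takes a genuinely different, and in fact more careful, route than the paper's own proof. The paper simply asserts that the conditions of Proposition \ref{prop:zalcondition} force $\sum_i r_i=2g$ and $\sum_i d_i=p$, and then both directions are immediate: the forward direction is your explicit data, and the converse is a one-line pigeonhole (if $g>p$ then $\sum_i r_i>2\sum_i d_i$, so some $r_i\ge 2d_i+1$, whence $r_i^2\ge 4d_i^2+4d_i+1>4d_i$). But the discriminant condition $[\beta]=\pm[\sum_i r_i]$ only fixes $\sum_i r_i$ modulo $2n-2$, so strictly speaking the paper's normalization is by fiat, and its pigeonhole genuinely breaks down for other representatives: already for $R=2g+(2n-2)$ one computes $D=p+2g+(n-1)$, so $R<2D$ and no $r_i$ need exceed $2d_i$. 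So the subtlety you isolate (all representatives $R=\pm 2g+(2n-2)k$, all parity patterns) is real, and your framework of integrality defect plus the Lagrange identity is the right way to handle it.

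However, your converse has a genuine gap at exactly the step you flag. What you need is $\sum_{i<j}(r_i-r_j)^2+3(n-1)t\ge 4g(n-1-g)$, where $t$ is the number of odd $r_i$. The phrase ``odd entries only enlarge the right-hand side via the defect term'' is not a proof: odd entries can strictly \emph{decrease} the Lagrange term below $4g(n-1-g)$ (take $2g$ entries equal to $1$ and the rest $0$: the term drops to $2g(n-1-2g)$), so one must actually show that the defect $3(n-1)t$ always outweighs the savings, and your most-equal analysis is set up only within a single parity class. The gap closes with a short smoothing argument: since $R$ is even, $t$ is even; pick two odd entries $r_a\ge r_b$ and replace them by $r_a-1$ and $r_b+1$. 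This preserves $R$ (hence the congruence), removes two odd entries, and changes $\sum_{i<j}(r_i-r_j)^2$ by $(n-1)\bigl(2-2(r_a-r_b)\bigr)\le 2(n-1)$, while the defect term drops by $6(n-1)$; so the total strictly decreases. Iterating reduces everything to the all-even case, which your most-equal computation does handle, for every representative $q$ at once. With that lemma inserted, your proof is complete, and it establishes the proposition in the generality in which Proposition \ref{prop:zalcondition} is actually stated -- something the paper's own two-line argument does not do.
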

\begin{proof}
Let us call $\beta=h_S-2g r_n$. We have $\beta^2=-2+2p-\frac{1}{2n-2}4g^2$ and $[\beta]=[2g r_n]$ in the discriminant group. Therefore, we must have
\begin{align}
\sum d_i&=p,\\
\sum r_i &=2g.
\end{align}
If $p\geq g$, we can set $r_i= 2$ for $g$ indices $i$ such that $d_i\neq 0$ and set $r_i=0$ for all the others, so that the conditions in Proposition \ref{prop:zalcondition} are satisfied. On the other hand, if $g>p$, there is at least one $r_i>2d_i$, so that $4d_i-r_i^2<0$, contradicting the third 
item in Proposition \ref{prop:zalcondition}.
\end{proof}

\begin{rmk}\label{rmk:zal4ever}
{\rm{Observe that if $p,g$ and $n$ are as in Proposition \ref{prop:equivalent}, then for all $n'\geq n$ the integers $p,g$ and $n'$
provide again examples of primitive classes $h_S-2g r_{n'}\in H_2(S^{[n']},\Z)$ which cannot rule a divisor. 
}}
\end{rmk}

\begin{rmk}\label{rmk:nozal}
{\rm{Condition (ii) in Remark \ref{rmk:precisestate} is not sufficient to ensure the existence of a uniruled divisor
covered by primitive rational curves. Indeed let $n=11$ and consider two general polarized $K3$ surfaces $(S_1,h_1)$ and $(S_2,h_2)$ 
respectively of genus $2$ and $4$.  One checks that the classes $C_1:=h_{S_1}-r_{11}$ on $S_1^{[11]}$ and $C_2:=h_{S_2}-9r_{11}$ on $S_2^{[11]}$ have the same square 
$=2- 1/{20}$. The divisors $h_i,\ i=1,2$ such that $C_i=h_i/\div(h_i)$ are $h_1= 20 h_{S_1}-\delta_{11}$ and  $h_2= 20 h_{S_2}-9 \delta_{11}$.
Nevertheless they are not in the same orbit under the monodromy action. This can be seen as follows: by Markman two classes are monodromy equivalent if and only if they have the same square and their images in the discriminant group are equal up to sign. Now the image of the first class is $[r_{11}]$ while that of the second is $[9r_{11}]$. 
Moreover, by Proposition \ref{prop:zalcondition}, only the first one can be the class of rational curves covering a divisor.
}}
\end{rmk}

\subsection{Uniruledness via non-primitive rational curves}\label{ss:nonprim}
Let $\epsilon$ be equal to $0$ or $1$.
Suppose $n,p$ and $g$ are positive integers such that
\begin{eqnarray}
\label{eq:cond1} \frac{(n-1+\epsilon)}{2}\geq g\geq p+1;\\
\label{eq:cond2} (2p-2)(2n-2)> (2g-\epsilon)^2.  
\end{eqnarray}
Let $(S,h_S)$ be a general polarized $K3$ surface of genus $p=p_a(h_S)$. 
Let $C\in H_2(S^{[n]},\Z)$ be a primitive curve class of the form
$$
 h_S - (2g-\epsilon)r_n. 
$$
Notice that condition (\ref{eq:cond2}) is equivalent to $q(C)>0$. By condition (\ref{eq:cond1}) we cannot
apply our main result to $C$ (and by \cite[Corollary A.3]{zal} there is no way to obtain a uniruled divisor covered 
by rational curves of class $C$). Nevertheless it makes sense to ask the following
\begin{ques}\label{ques:multiple}
Does there exist an integer $m>0$ such that $mC$ is represented by rational curves covering a divisor?
\end{ques} 
In particular, as the Severi varieties of $|mh_S|$ are known to be non-empty by \cite{Chen02}, it is natural to try and extend our approach to the multiple hyperplane linear system.
Precisely we can look for an integral nodal curve $C'\in |mh_S|$ of genus $g'=\lceil \frac{2mg-m\epsilon}{2}\rceil$ and take $g'+1$ points
on $C'$ to get a rational curve in $S^{[n]}$ of class $mC$ (possibly after the union of an exceptional tail, depending on the parity of $m$).

The obvious necessary numerical conditions to be satisfied are:
\begin{eqnarray}
\label{eq:cond1'} g'+1\leq n;\\
\label{eq:cond2'} g'\leq p_a(mh_S)= m^2(p-1)+1.  
\end{eqnarray}
If such an integer $m$ exists, by applying Proposition \ref{prop:mainK3} and the same strategy of Theorem \ref{thm:main} we would get existence of uniruled divisors in the components of $\mathfrak M$ (which is  the union  $\cup_{d>0}\mathfrak M_{2d}$ 
of the moduli spaces $\mathfrak M_{2d}$ of projective irreducible holomorphic symplectic varieties of $K3^{[n]}$-type polarized by a 
line bundle of degree $2d$) left out from Theorem \ref{thm:main}. 

Let us define the following quantities (coming from conditions (\ref{eq:cond1'}) and (\ref{eq:cond2'}) above by distinguishing according to the parity of $m$):
\begin{eqnarray}
\label{eq:m_maxeven} m^{even}_{max}:= \frac{2(n-1)}{2g-\epsilon};\\ 
\label{eq:m_maxodd} m^{odd}_{max}:= \frac{2n-3}{2g-\epsilon};\\
\label{eq:m_mineven} m^{even}_{min}:= \frac{g-\epsilon/2+\sqrt{(g-\epsilon/2)^2-4(p-1)}}{2(p-1)};\\
\label{eq:m_minodd} m^{odd}_{min}:= \frac{g-\epsilon/2+\sqrt{(g-\epsilon/2)^2-2(p-1)}}{2(p-1)}.  
\end{eqnarray}

From the above discussion, we deduce the following

\begin{prop}\label{prop:mbound}
If there exists an integer $m>0$ such that 
\begin{equation}\label{eq:mbound}
m_{min}^\bullet \leq m \leq m_{max}^\bullet,
\end{equation}
then Question \ref{ques:multiple} has a positive answer. 
\end{prop}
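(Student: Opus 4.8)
The plan is to realise the non-primitive class $mC = m h_S - m(2g-\epsilon) r_n$ as the class of a rational curve sweeping out a divisor on $S^{[n]}$, by running the construction of Proposition \ref{prop:mainK3} on the linear system $|m h_S|$ rather than on $|h_S|$, and then to verify that the two numerical constraints \eqref{eq:cond1'} and \eqref{eq:cond2'} are exactly the inequalities $m \le m^\bullet_{max}$ and $m \ge m^\bullet_{min}$. Since Proposition \ref{prop:mainK3} is already stated for $L = mH$, essentially all of the geometric input is available; the content of the statement is the reduction of these two feasibility conditions to the four displayed quantities.

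First I would fix the geometric genus $g' = \lceil m(2g-\epsilon)/2 \rceil$ of the nodal curve to be used. Choosing an integral nodal curve $C' \in |m h_S|$ of geometric genus $g'$ and applying the $\mathfrak{g}^1_{g'+1}$ construction of Proposition \ref{prop:mainK3} -- adding an exceptional tail when the coefficient $m(2g-\epsilon)$ is odd, exactly as in the passage from Proposition \ref{prop:class1} to Proposition \ref{prop:class2} -- produces a rational curve whose class in $H_2(S^{[n]}, \Z)$ is $m h_S - m(2g-\epsilon) r_n = mC$. This is where the parity of $m$ enters, since $m(2g-\epsilon)$ is odd precisely when $\epsilon = 1$ and $m$ is odd. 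The curve lives in $S^{[g'+1]}$, which I embed into $S^{[n]}$ by adding $n-(g'+1)$ generic points as in Remark \ref{rmk:ndiv}.

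Second I would translate the feasibility of this construction into the two inequalities. Condition \eqref{eq:cond1'}, namely $g'+1 \le n$, is linear in $m$ and rearranges to $m \le m^\bullet_{max}$, the values $m^{even}_{max}$ and $m^{odd}_{max}$ arising from the ceiling in $g'$ according to the parity of $m$. Condition \eqref{eq:cond2'}, namely $g' \le p_a(m h_S) = m^2(p-1)+1$, is quadratic in $m$: writing it as
\[
(p-1) m^2 - \Big(g - \tfrac{\epsilon}{2}\Big) m + c \ge 0,
\]
where $c$ depends on the parity through $g'$, the leading coefficient $p-1$ is positive (for $p=1$ the hypothesis is vacuous, as $m^\bullet_{min}$ is then infinite) and the discriminant is nonnegative under the standing hypotheses \eqref{eq:cond1} and \eqref{eq:cond2}. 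Hence the inequality holds as soon as $m$ exceeds the larger root, and that larger root is precisely $m^\bullet_{min}$, the discriminants $\big(g-\tfrac{\epsilon}{2}\big)^2 - 4(p-1)$ and $\big(g-\tfrac{\epsilon}{2}\big)^2 - 2(p-1)$ accounting for the even and odd cases respectively. Thus $m^\bullet_{min} \le m \le m^\bullet_{max}$ is exactly the conjunction of \eqref{eq:cond1'} and \eqref{eq:cond2'}.

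Finally, once $m$ lies in this range, the existence of the required integral nodal curve of geometric genus $g'$ with $p_a(mh_S)-g'$ nodes in $|m h_S|$ is guaranteed by Chen's non-emptiness theorem \cite{Chen02}, and Proposition \ref{prop:mainK3} then yields a genuine uniruled divisor swept out by rational curves of class $mC$; this answers Question \ref{ques:multiple} affirmatively, and by the deformation strategy of Theorem \ref{thm:main} the conclusion propagates to every component of $\mathfrak M$ of the corresponding polarization type. The step I expect to be the main obstacle is the bookkeeping tying the ceiling $g' = \lceil m(2g-\epsilon)/2 \rceil$ and the optional exceptional tail to the exact class $mC$ and to the correct constant $c$ in the quadratic, so that the solved inequalities reproduce the four quantities \eqref{eq:m_maxeven}--\eqref{eq:m_minodd} on the nose rather than merely up to an off-by-one; verifying that the discriminants are nonnegative under \eqref{eq:cond1}--\eqref{eq:cond2} is a secondary point that must not be overlooked.
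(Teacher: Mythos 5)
Your proposal is correct and is essentially the paper's own argument: the paper deduces Proposition \ref{prop:mbound} directly from the discussion preceding it (nodal curves of genus $g'=\lceil m(2g-\epsilon)/2\rceil$ in $|mh_S|$, whose existence comes from \cite{Chen02}, fed into Proposition \ref{prop:mainK3} with an exceptional tail according to the parity of $m(2g-\epsilon)$, then propagated by the deformation strategy of Theorem \ref{thm:main}), and your translation of \eqref{eq:cond1'} and \eqref{eq:cond2'} into $m\le m^{\bullet}_{max}$ and $m\ge m^{\bullet}_{min}$ by parity is exactly how the four quantities \eqref{eq:m_maxeven}--\eqref{eq:m_minodd} arise. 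The only difference is that you make the quadratic-inequality algebra and the discriminant check explicit, which the paper leaves implicit.
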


To apply Proposition \ref{prop:mbound} first of all one must check that  $m_{min}^\bullet \leq m_{max}^\bullet$. 
This easily follows from (\ref{eq:cond2}) when $m$ is even or $\epsilon=0$. 

We can now show that the apparent persistence of the pathologies, observed in Remark \ref{rmk:zal4ever}, can be avoided by taking non-primitive curves:
\begin{prop}
Let $n,p$ and $g$ as above. Then for all $n'\geq g+1+n$ there exists an integer $m>0$  satisfying (\ref{eq:mbound}).
\end{prop}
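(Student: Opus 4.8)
The plan is to exploit an asymmetry in the bounds \eqref{eq:m_maxeven}--\eqref{eq:m_minodd}: the lower bounds $m_{min}^{\bullet}$ depend only on $p,g,\epsilon$, whereas the upper bounds $m_{max}^{\bullet}$ are strictly increasing in the dimension. Consequently, if some integer $m$ satisfies \eqref{eq:mbound} for the smallest admissible value $n'=n+g+1$, then the same $m$ satisfies it for every $n'\geq n+g+1$, since passing to a larger $n'$ only enlarges $m_{max}^{\bullet}$ while leaving $m_{min}^{\bullet}$ and $m$ unchanged. So first I would reduce the statement to producing a single valid $m$ at $n'=n+g+1$ (one checks that the analogues of \eqref{eq:cond1}--\eqref{eq:cond2} for the class $h_S-(2g-\epsilon)r_{n'}$ remain valid there, being weaker than the ones for $n$).

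For that base case I would estimate the length of the interval $[m_{min}^{even},m_{max}^{even}]$. Write $D=2g-\epsilon$. Using $\sqrt{(g-\epsilon/2)^2-4(p-1)}\leq g-\epsilon/2$ gives $m_{min}^{even}\leq \frac{D}{2(p-1)}$, while at $n'=n+g+1$ we have $m_{max}^{even}=\frac{2(g+n)}{D}$. Condition \eqref{eq:cond2} reads $4(p-1)(n-1)>D^2$, i.e. $n>1+\frac{D^2}{4(p-1)}$, so that $\frac{2n}{D}>\frac{2}{D}+\frac{D}{2(p-1)}$ and hence $m_{max}^{even}>\frac{2g+2}{D}+\frac{D}{2(p-1)}$. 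Subtracting the bound on $m_{min}^{even}$ yields
$$
 m_{max}^{even}-m_{min}^{even}>\frac{2g+2}{D}=\frac{2g+2}{2g-\epsilon}>1 .
$$
Thus the closed interval $[m_{min}^{even},m_{max}^{even}]$ has length strictly greater than $1$ and therefore contains an integer.

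It remains to match parities, which I expect to be the only genuine difficulty. If $\epsilon=0$ there is nothing to do: the curve $mC$ is built without an exceptional tail, so the numerical conditions \eqref{eq:cond1'}--\eqref{eq:cond2'} do not depend on the parity of $m$, and any integer in the interval works. If $\epsilon=1$ and the interval contains an even integer we are again done. Otherwise the interval, having length $>1$, contains a unique integer $m_1$, necessarily odd, and I must verify $m_1\in[m_{min}^{odd},m_{max}^{odd}]$. The upper bound is the crucial point: $m_1D=m_1(2g-1)$ is odd, while $m_1\leq m_{max}^{even}$ forces $m_1D\leq 2(n'-1)=2n'-2$, an even number; an odd integer bounded above by an even one in fact satisfies $m_1D\leq 2n'-3$, i.e. $m_1\leq\frac{2n'-3}{2g-1}=m_{max}^{odd}$. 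For the lower bound, $m_1>m_{max}^{even}-1>\frac{D}{2(p-1)}\geq m_{min}^{odd}$, using again the estimate above. Hence $m_1$ satisfies \eqref{eq:mbound} with the odd bounds, and Proposition \ref{prop:mbound} yields a positive answer to Question \ref{ques:multiple}.

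The main obstacle is precisely this parity matching: the length estimate only barely exceeds $1$, so a crude counting argument cannot by itself guarantee an integer of the prescribed parity. The decisive observation is that for an odd $m_1$ the product $m_1(2g-1)$ is odd, which automatically pushes the unique integer of the even interval into the odd interval exactly in the situation where no even integer is available. I would therefore organize the write-up as: reduction to $n'=n+g+1$, the length bound coming from \eqref{eq:cond2}, and then this short parity argument to conclude.
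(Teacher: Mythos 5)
Your proof is correct, and its engine is the same one that drives the paper's own two-line proof: the lower bounds $m^{\bullet}_{min}$ are independent of the dimension, the upper bounds $m^{\bullet}_{max}$ grow linearly in it, and an interval of length at least one contains an integer. Concretely, the paper notes that $m^{\bullet}_{min}\leq m^{\bullet}_{max}$ already holds at $n'=n+1$ (for both parities, using \eqref{eq:cond2}) and that increasing the dimension by $g$ more raises $m^{\bullet}_{max}$ by $\tfrac{2g}{2g-\epsilon}\geq 1$. The parity matching --- which you rightly identify as the only genuine difficulty --- is left implicit in the paper; the quickest completion of its argument is to observe that $m^{even}_{min}<m^{odd}_{min}$ and $m^{odd}_{max}<m^{even}_{max}$, so the odd interval sits inside the even one, and therefore \emph{any} integer found in the odd interval satisfies \eqref{eq:mbound} for its own parity. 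You argue in the opposite direction: you work in the larger even interval, show directly from \eqref{eq:cond2} that at $n'=n+g+1$ its length exceeds $\tfrac{2g+2}{2g-\epsilon}>1$, and, when $\epsilon=1$ and the only available integer $m_1$ is odd, you push it into the odd interval using the parity of $m_1(2g-1)$ (an odd integer bounded by the even number $2n'-2$ is in fact bounded by $2n'-3$) together with the estimate $m_1>m^{even}_{max}-1>\tfrac{2g-1}{2(p-1)}\geq m^{odd}_{min}$. Both routes are sound; the containment remark is the shorter one, but your version makes explicit precisely the step the published proof glosses over, and your treatment of $\epsilon=0$ (no exceptional tail, hence parity-independent conditions) is the correct reading of the superscripts in \eqref{eq:m_maxeven}--\eqref{eq:m_minodd}.
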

\begin{proof}
If $n'\geq n+1$ we always have $m_{min}^\bullet \leq m_{max}^\bullet$. Moreover for $n'\geq n+1+g$ the value of $m_{max}^\bullet$
increases by at least one.
\end{proof}
We have seen in Remark \ref{rmk:npiccolo} that we do have existence of uniruled divisors covered by primitive rational curves in sufficiently ample linear systems 
on  irreducible holomorphic symplectic manifolds of \kntiposp with $n\leq 7$. 
 
There are  cases in which condition (\ref{eq:mbound}) is easily checked to hold: for instance for all  $p, g$ and $n$ verifying 
(\ref{eq:cond1}) and (\ref{eq:cond2}) and 
 such that $(2p-2)| (2g-\epsilon)$. In particular this yields the following.
 
\begin{thm}\label{thm:nleq13}
Let $8\leq n\leq 13$ be an integer. Let $\mathfrak M=\cup_{d>0}\mathfrak M_{2d}$  be the union
of the moduli spaces $\mathfrak M_{2d}$ of projective irreducible holomorphic symplectic varieties of $K3^{[n]}$-type polarized by a 
line bundle of degree $2d$.  For all $(X,H)\in \mathfrak M$
there exists a positive integer $a$ such that the linear system $|aH|$ contains a uniruled divisor.
\end{thm}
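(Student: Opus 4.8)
The plan is to treat every polarized pair $(X,H)$ of $K3^{[n]}$-type in the range $8\le n\le 13$ by first transporting it to a Hilbert scheme and then splitting the resulting curve class into two regimes, according to whether the primitive construction of Theorem \ref{thm:precise} or the non-primitive construction of Proposition \ref{prop:mbound} applies. Concretely, I would apply Corollary \ref{cor:control-of-polarization-dual} to the primitive class $\alpha=H^\vee/\div(H)$: this gives a polarized deformation carrying $(X,H)$ to $(S^{[n]},h_S-\mu r_n)$, for a polarized K3 surface $(S,h_S)$ (which we may take general) with $h_S^2=2p-2>0$ and an integer $\mu\in[0,n-1)$. Writing $\mu=2g-\epsilon$ with $\epsilon\in\{0,1\}$, and noting that ampleness of $H$ forces $q(\alpha)>0$, i.e. condition (\ref{eq:cond2}), the task becomes: produce on $S^{[n]}$ a uniruled divisor ruled by rational curves of class proportional to $h_S-\mu r_n$. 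Once this is done, Corollary \ref{cor:divisor} deforms the divisor back to $X$ and places it, with class a positive multiple of $H$, inside some linear system $|aH|$ with $a>0$.

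Next I would set up the dichotomy. If $p\ge g$, then taking $\gamma=h_S$ and $\eta=-r_n$ verifies conditions (i)--(ii) of Theorem \ref{thm:precise} (indeed $\gamma\in\eta^\perp$, $q(\eta)=-1/(2n-2)$ lies in the exceptional monodromy orbit, and $q(\gamma)=2p-2$), so that theorem immediately yields the desired uniruled divisor, covered by primitive rational curves of class $\alpha$. Thus the only substance lies in the \emph{exceptional} regime $g\ge p+1$. There, since $\mu\le n-2$ gives $g\le (n-1+\epsilon)/2$, condition (\ref{eq:cond1}) holds as well, and we are exactly in the hypotheses of Proposition \ref{prop:mbound}. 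It therefore suffices to exhibit a positive integer $m$ with $m_{min}^\bullet\le m\le m_{max}^\bullet$ for one of the two parities, after which the Severi-variety construction of Proposition \ref{prop:mainK3} applied to $|mh_S|$, augmented by generic points as in Remark \ref{rmk:ndiv}, produces a rational curve of class $m(h_S-\mu r_n)$ sweeping out a divisor.

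The point specific to $8\le n\le 13$ is that the exceptional triples form a short explicit list. From $p\ge 2$, $g\ge p+1$, and $2g-\epsilon\le n-2\le 11$ one extracts $3\le g\le 6$ and $2\le p\le g-1$, so only finitely many $(n,p,g)$ together with $\epsilon\in\{0,1\}$ arise. For every triple with $(2p-2)\mid(2g-\epsilon)$ the interval of (\ref{eq:mbound}) visibly contains the integer $(2g-\epsilon)/(2p-2)$, and Proposition \ref{prop:mbound} applies at once. For the finitely many remaining triples, where this divisibility fails (for instance $p=2$, $\epsilon=1$), I would simply evaluate the four quantities (\ref{eq:m_maxeven})--(\ref{eq:m_minodd}) and check that an integer $m$ of suitable parity still lies in the admissible window; in the sample case $n=11,\ p=2,\ g=3,\ \epsilon=1$ one finds the even window $[2,4]$, so $m=4$ works.

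The main obstacle I anticipate is precisely this finite but case-sensitive verification: one must confirm that $[m_{min}^\bullet,m_{max}^\bullet]$ is nonempty and actually captures an integer in \emph{every} exceptional triple, using whichever parity makes $m_{max}^\bullet-m_{min}^\bullet$ wide enough or positions the endpoints favorably. It is the breakdown of this phenomenon that fixes the bound at $n=13$: for $n\ge 14$ one expects triples for which the window $[m_{min}^\bullet,m_{max}^\bullet]$ is too narrow or badly placed to contain any integer, so that the non-primitive construction no longer covers all components of $\mathfrak M$.
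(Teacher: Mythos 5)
Your strategy coincides with the paper's own proof: transport $(X,H)$ to $(S^{[n]},h_S-\mu r_n)$ via Corollary \ref{cor:control-of-polarization-dual}, split into the regime $p\ge g$ (handled by Theorem \ref{thm:precise}) and the exceptional regime $g\ge p+1$ (handled by Proposition \ref{prop:mbound}), and dispose of the finitely many exceptional triples by checking that the window \eqref{eq:mbound} captures an integer. The paper does exactly this, and its proof \emph{is} the explicit table of the ten exceptional classes with a working $m$ for each; so the deferred ``finite but case-sensitive verification'' you flag as the main obstacle is not a side issue but the entire content of the argument, and a complete write-up must actually display it.

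There is, however, one concrete gap in your enumeration. You bound the exceptional cases by $2g-\epsilon\le n-2$, taking the half-open interval $[0,n-1)$ in Corollary \ref{cor:control-of-polarization-dual} at face value. But a complete system of representatives of $\Z/(2n-2)\Z$ up to sign must include $n-1$, because $n-1\equiv-(n-1)\pmod{2n-2}$ is fixed by negation; the half-open interval in that corollary is an off-by-one slip, inconsistent with Remark \ref{rmk:npiccolo} (which uses $0\le\mu\le n-1$) and with the paper's own table. The correct bound is therefore $2g-\epsilon\le n-1$, and your list misses, at their minimal $n$, exactly the components with $\mu=n-1$: most notably $h_S-7r_8$ with $p=3$ at $n=8$ (the degree-$2$, divisibility-$2$ polarization in dimension $16$, one of the key exceptional classes of \cite{zal} and a row of the paper's table), and likewise $h_S-9r_{10}$, $h_S-11r_{12}$, $h_S-12r_{13}$. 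Since the theorem quantifies over \emph{all} $(X,H)\in\mathfrak M$, these components must be treated; fortunately the even window of \eqref{eq:mbound} contains $m=2$ in each of them, so your argument survives once the bound is corrected. A smaller caveat: your claim that the window ``visibly'' contains $(2g-\epsilon)/(2p-2)$ in the divisibility case is automatic from \eqref{eq:cond2} only when that integer is even (divisibility forces $\epsilon=0$); when it is odd one needs $2g^2\le(p-1)(2n-3)$, which is strictly stronger than \eqref{eq:cond2}, though it does hold in the one relevant case $p=2$, $g=3$, $\epsilon=0$, $n=11$.
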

\begin{proof}
We follow the strategy of taking a curve in the multiple hyperplane system $|mH_S|$ outlined above. 
In the following table, we list the curve classes arising as exceptions to Theorem \ref{thm:main}, together with the genus $p$ of the $K3$ surface $S$, the values of $g$ and $\epsilon$, the minimal $m$ which satisfies \eqref{eq:mbound} and the smallest $n$ such that $h_S-(2g-\epsilon)r_n$ does not satisfy Theorem \ref{thm:main}.  

\begin{table}[hbt!]
\begin{tabular}{|c|c|c|c|c|c|}
\hline
Class & $p$ & $g$ & $\epsilon$ & $m$ & $n$\\
\hline
$h_S-5r_n$ & 2 & 3 & 1 & 2 & 8\\
\hline
$h_S-7r_n$ & 3 & 4 & 1 & 2 & 8\\
\hline
$h_S-8r_n$ & 3 & 4 & 0 & 2 & 10\\
\hline
$h_S-9r_n$ & 4 & 5 & 1 & 2 & 10\\
\hline
$h_S-10r_n$ & 4 & 5 & 0 & 2 & 10\\
\hline
$h_S-6r_n$ & 2 & 3 & 0 & 3 & 11\\
\hline
$h_S-9r_n$ & 3 & 5 & 1 & 2 & 12\\
\hline
$h_S-11r_n$ & 4 & 6 & 1 & 2 & 12\\
\hline
$h_S-11r_n$ & 5 & 6 & 1 & 2 & 12\\
\hline
$h_S-12r_n$ & 5 & 6 & 0 & 2 & 13\\
\hline

\end{tabular}
\end{table}
The conclusion immediately follows from the table.
\end{proof}
%\begin{ex}\label{ex:nleq13}
%{\rm{
%For all $n\leq 13$, the strategy of taking a curve in the multiple hyperplane system $|mh_S|$ works for all the possible exceptions\francois{Wouldn't it be clearer to write this as a Theorem ? There always exists a uniruled divisor...}. In the following table, we list the curve classes arising as exceptions to Theorem \ref{thm:main}, together with the genus $p$ of the $K3$ surface $S$, the values of $g$ and $\epsilon$, the minimal $m$ which satisfies \eqref{eq:mbound} and the smallest $n$ such that $h_S-(2g-\epsilon)r_n$ does not satisfy Theorem \ref{thm:main}.
%
%\begin{table}[hbt!]
%\begin{tabular}{|c|c|c|c|c|c|}
%\hline
%Class & $p$ & $g$ & $\epsilon$ & $m$ & $n$\\
%\hline
%$h_S-5r_n$ & 2 & 3 & 1 & 2 & 8\\
%\hline
%$h_S-7r_n$ & 3 & 4 & 1 & 2 & 8\\
%\hline
%$h_S-8r_n$ & 3 & 4 & 0 & 2 & 10\\
%\hline
%$h_S-9r_n$ & 4 & 5 & 1 & 2 & 10\\
%\hline
%$h_S-10r_n$ & 4 & 5 & 0 & 2 & 10\\
%\hline
%$h_S-6r_n$ & 2 & 3 & 0 & 3 & 11\\
%\hline
%$h_S-9r_n$ & 3 & 5 & 1 & 2 & 12\\
%\hline
%$h_S-11r_n$ & 4 & 6 & 1 & 2 & 12\\
%\hline
%$h_S-11r_n$ & 5 & 6 & 1 & 2 & 12\\
%\hline
%$h_S-12r_n$ & 5 & 6 & 0 & 2 & 13\\
%\hline
%
%\end{tabular}
%\end{table}

The first case when this strategy does not work appears for $n=14$, by taking 
$$
 C:= h_S-10r_{14},
$$
where $h_S$ is a polarization of genus $p_a(h_S)=3$ and one checks that $2<m_{min}<m_{max}<3$.

The bad news is that, even asymptotically in $n$, there is no hope that condition (\ref{eq:mbound}) can hold as shown by the following. 
\begin{ex}\label{ex:ciaone}
{\rm {Take $g=\lceil \frac{n-1}{3}\rceil$, $p-1=\lceil \frac{n-1}{9}\rceil+1$ and $n$ large enough. Conditions (\ref{eq:cond1}) and (\ref{eq:cond2})
are satisfied. However both $m_{min}^\bullet$ and $m_{max}^\bullet$ are $<3$, but $m_{min}^\bullet \to_{n\to +\infty} 3$.
}}
\end{ex}

\begin{rmk}\label{rmk:BNforse}
{\rm{One may wonder whether a minor modification of this strategy might still lead to the existence of uniruled divisors in all  cases. 
One possibility is to construct {\it different } rational curves coming from the Brill-Noether theory of nodal curves in the multiple
hyperplane linear system of a general $K3$. This approach presents two difficulties. One has first to control the Brill-Noether theory of such nodal curves
(which does not seem to be an easy task, knowing that already smooth curves in multiples of the hyperplane section are not Brill-Noether general). 
Secondly, even if one disposed of a family of $\g^1_n$'s on nodal  curves in $|mh_S|$ of the right dimension, the analogue of Proposition \ref{prop:mainK3}
should still be proved for such family.
}}
\end{rmk}
\subsection{Some codimension 2 coisotropic subvarieties}\label{ss:codim2}

In this paragraph we look at the cases where, by \cite{zal}, there are no uniruled divisors ruled by primitive rational curves, and try to study the codimension of the ruled locus in this case. In particular, we have the following:

\begin{thm}\label{thm:codim2}
Let $X$ be a  polarized irreducible holomorphic symplectic manifold of \kntipo. Let $C$ be a curve class such that the pair $(X,C)$ is deformation equivalent to  $(S^{[n]},h_S-(2g-1)r_n)$, where $q(h_S)=2g-4$, for a certain integer $n/2\geq g> 2$.
 Then $X$ has a codimension two locus covered by rational surfaces ruled by a primitive curve class.
\end{thm}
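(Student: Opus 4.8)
The plan is to realize the required curves and surfaces explicitly on a Hilbert scheme $S^{[n]}$ and then transport them to the general deformation $X$ using the deformation machinery of Section~3. First I would work on $S^{[n]}$ with $S$ a $K3$ surface of Picard rank one carrying $h_S$ with $q(h_S)=2g-4$, so that the general member $C\in|h_S|$ is a smooth curve of genus $p=g-1$. On such a $C$ a general line bundle $L\in\Pic^{g+1}(C)$ satisfies $h^0(L)=3$ by Riemann--Roch (as $\deg L=g+1=(g-1)+2$), and $|L|$ is base-point-free, so $|L|\cong\PP^2$. The injection $|L|\hookrightarrow C^{(g+1)}\hookrightarrow S^{[g+1]}$, followed by adding $n-g-1$ general points of $S$ to land in $S^{[n]}$, has as image a rational surface $\Sigma_{C,L}$, and each line of $\PP^2=|L|$ is a base-point-free $\mathfrak g^1_{g+1}$ whose associated rational curve has class $h_S-(2g-1)r_n$ by the Riemann--Hurwitz computation used for Propositions \ref{prop:class1} and \ref{prop:class2} (here the exceptional intersection is $\gamma-1+k=(g-1)-1+(g+1)=2g-1$). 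Hence $\Sigma_{C,L}$ is a rational surface ruled by rational curves of the primitive class $C=h_S-(2g-1)r_n$.

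Next I would show these surfaces sweep out a locus of codimension exactly two in $S^{[n]}$. They are parametrized by $(C,L)$ together with the $n-g-1$ extra points, a family of dimension $2(g-1)+2(n-g-1)=2n-4$. A generic length-$(g+1)$ subscheme $Z$ of the swept locus lies on a unique curve of $|h_S|$: the residuation sequence $0\to\mathcal O_S\to I_Z(h_S)\to\omega_C(-Z)\to 0$ together with $h^0(\mathcal O_C(Z))=3$ yields $h^0(I_Z(h_S))=1$, and then $L=\mathcal O_C(Z)$ is determined, so $Z$ lies on a single surface $\Sigma_{C,L}$. Therefore the image has dimension $2n-2$, i.e. codimension two, and through a general point there passes exactly the $\PP^1$ of lines of $|L|$ through $[Z]$, a one-dimensional family of ruling curves.

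I would then deform to $X$. The ruling curves of class $C$ form a family of the minimal dimension $2n-2$ allowed by Proposition \ref{prop:defcurves}(1), so by Proposition \ref{prop:defcurves}(2), exactly as in the proof of Corollary \ref{cor:divisor}, the family deforms along the Noether--Lefschetz locus of $C$ and dominates the base of any deformation; thus the curves persist on $X$ and still move in a $(2n-2)$-dimensional family covering a locus $Y$. Since $p=g-1<g$, the class $C$ falls under the Oberdieck--Shen--Yin obstruction (Propositions \ref{prop:zalcondition} and \ref{prop:equivalent}), so no uniruled divisor is ruled by primitive curves of class $C$; hence $Y$ is not a divisor and the evaluation map from the universal curve has positive-dimensional fibers. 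On the other hand the fiber dimension of the relative evaluation map is upper semicontinuous on the total space, and at a general point of the central fibre $S^{[n]}$ it equals one; as the locus where it is $\geq 1$ is closed and contains the generic point, it is at most one on $X$. Combining the two bounds it is exactly one, so $Y$ has codimension two and through a general $y\in Y$ passes a one-dimensional family of rational curves of class $C$.

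Finally, the surface $\Sigma_y$ swept by the $C$-curves through $y$ is covered by rational curves all passing through $y$, hence is rationally connected and therefore a rational surface, ruled by the primitive class $C$; as $y$ varies these cover $Y$, which is then coisotropic of codimension two with the $\Sigma_y$ as the leaves of its characteristic foliation. I expect the main obstacle to be precisely this transport step: the explicit codimension-two count lives only on $S^{[n]}$, whereas on $X$ one has a priori only $\operatorname{codim}\geq 1$. It is the upper semicontinuity of the evaluation fiber dimension, pinned to the value one on $S^{[n]}$ and combined with the OSY non-existence of a primitive uniruled divisor, that forces codimension two; making this rigorous, including checking that the relevant Kontsevich component has exactly the minimal dimension so that Proposition \ref{prop:defcurves}(2) genuinely applies, is the delicate point.
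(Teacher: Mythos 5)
Your construction on the central fibre is correct, and it is a genuinely different route from the paper's. The paper builds nothing by hand at this stage: since the class $h_S-(2g-1)r_n$ with $p_a(h_S)=g-1$ lies \emph{outside} the numerical range of \cite{klm2}, it first applies Proposition \ref{prop:ultima} (Corollary \ref{cor:control-of-polarization-dual} plus the shifted representatives of Remark \ref{rmk:traslo}) to replace the pair by $(S_1^{[n]}, h_{S_1}-(2g+2n-3)r_n)$, which does satisfy (\ref{eq:nuovobound}), and then quotes Theorem \ref{thm:KLM2_immagine} there. What that citation delivers is exactly what you construct directly: a $(2n-2)$-dimensional family of class-$C$ rational curves sweeping a $\PP^{\chi-2(p-g)-1}=\PP^2$-bundle over a holomorphic symplectic manifold of dimension $2n-4$. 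Your version checks out in every detail: $h^0(L)=3$ since $\deg L=p+2$ and a general $L$ is non-special; a general line of $|L|$ is a base-point-free $\mathfrak g^1_{g+1}$ of class $h_S-(p+(g+1)-1)r_n=h_S-(2g-1)r_n$; the surfaces move in a $(2n-4)$-dimensional family; and the residuation sequence gives $h^0(I_Z(h_S))=1+h^1(\mathcal O_C(Z))=1$, whence generic finiteness and codimension two. In effect you reprove, on the original Hilbert scheme and with smooth curves instead of nodal ones, the special case of \cite{klm2} that the paper invokes, bypassing Proposition \ref{prop:ultima} entirely.

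The gap is the one you flag, and it is genuine rather than merely delicate: both Proposition \ref{prop:defcurves}(2) and your semicontinuity step are statements about the irreducible component $M$ of $\overline{M_0}(S^{[n]},C)$ containing your curves, whereas you only control a $(2n-2)$-dimensional subfamily of it. If $\dim M>2n-2$, Proposition \ref{prop:defcurves}(2) gives no dominance over $B$, and the premise of your semicontinuity argument -- that the evaluation map has $1$-dimensional fibres at a general point of the central fibre -- concerns the universal curve over $M$, not over your subfamily; so as written the argument is circular. Note also that Proposition \ref{prop:mumford} cannot exclude the excess: for a codimension-$2$ swept locus it only yields $\codim W\le 4$, which is exactly borderline (unlike the divisor case in Corollary \ref{cor:divisor}, where the same count gives $3>2$). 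The circle can, however, be closed with your own construction: the general member of $M$ has irreducible domain (an open condition, satisfied by your curves), so Proposition \ref{prop:zalcondition} forces $\dim Y_M\le 2n-2$, hence $Y_M=\Sigma$; the base of $\Sigma$ (the relative compactified Jacobian, a Beauville--Mukai system, times $S^{[n-g-1]}$) is holomorphic symplectic, hence not uniruled, so every class-$C$ curve through a general point of $\Sigma$ lies in a $\PP^2$-fibre, where it must be a line because a degree-$d$ plane curve pushes forward to $dC$. Thus the evaluation map over $M$ has $1$-dimensional general fibres and $\dim M=2n-2$, as Proposition \ref{prop:defcurves}(2) requires; your semicontinuity-plus-OSY argument then correctly pins the codimension at two on the \emph{general} deformation $X'$. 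One last omission: the theorem concerns an arbitrary $X$ in the deformation class, so for a possibly special $X$ you still need the paper's closing step, namely that the flat limit of the coisotropic locus $Z'\subset X'$ has the same dimension and is covered by the flat limits of the rationally chain connected surfaces, which remain rationally chain connected, hence rational.
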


This will follow from the following result, proven in \cite[Theorem 6.1]{klm2}. We refer the reader to  \cite{klm2} for the notation.
\begin{thm}[Theorem 6.1, \cite{klm2}]\label{thm:KLM2_immagine}
Let $(S,h_S)$ be a very general primitively polarized $K3$ surface  of genus $p:=p_a(h_S) \geq 2$.
Let $0\leq g \leq p$ and $n \geq 2$ be integers satisfying
\begin{equation} \label{eq:nuovobound}
 2(p-g)+2 \leq \chi:=g-n+3 \leq p-g+n+1.
\end{equation} 
Then on $S^{[n]}$ there exists a $(2n-2)$-dimensional family of rational curves of class 
\begin{equation}\label{class}
h_S-(g+n-1)r_n.
\end{equation}
which covers a subvariety birational to a $\mathbb{P}^{\chi-2(p-g)-1}$-bundle on a
 holomorphic symplectic manifold of dimension $2(n+1+2(p-g)-\chi)$. 
\end{thm}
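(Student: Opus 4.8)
The plan is to deduce the statement from Theorem \ref{thm:KLM2_immagine} by choosing its parameters so that the subvariety it produces is a $\mathbb{P}^2$-bundle --- hence of codimension exactly two --- and then to transport the resulting family of curves back to $X$ along the deformation furnished by the hypothesis. The essential freedom is in the choice of the very general $K3$ surface and of the KLM2 integer: the point is that the monodromy orbit of $C$ can be represented on the Hilbert scheme of a $K3$ surface of much \emph{higher} genus, where the regime of Theorem \ref{thm:KLM2_immagine} that yields two-dimensional fibres becomes accessible. The direct representative $h_S-(2g-1)r_n$ on a genus $g-1$ surface does not fall in that regime, so the higher-genus representative is indispensable.

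First I would fix the numerology. Since $q(h_S)=2g-4$, the surface in the hypothesis has genus $g-1$ and $C$ is represented by $h_S-(2g-1)r_n$. I would then look for a very general primitively polarized $K3$ surface $(S',h_{S'})$ of genus $p':=3g+n-3$, with KLM2 parameter $g':=2g+n-2$, so that the class \eqref{class} equals $h_{S'}-(2g+2n-3)r_n$. A direct computation shows this class has the same Beauville--Bogomolov square as $h_S-(2g-1)r_n$ and the same image in the discriminant group $\mathbb{Z}/(2n-2)$ (both equal $-(2g-1)[r_n]$, as $(2n-2)[r_n]=0$); by the monodromy criterion recalled in Remark \ref{rmk:nozal}, equivalently via the canonical form of Corollary \ref{cor:control-of-polarization-dual}, the two pairs are deformation equivalent, so $(X,C)$ is deformation equivalent to $(S'^{[n]},h_{S'}-(2g+2n-3)r_n)$ with $S'$ very general. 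With these values one computes $\chi=g'-n+3=2g+1$ and $p'-g'=g-1$, so the lower bound in \eqref{eq:nuovobound} reads $2g\le 2g+1$ and the upper bound reads $2g+1\le g+n$, both holding since $g>2$ and $n\ge 2g$. Theorem \ref{thm:KLM2_immagine} then gives a $(2n-2)$-dimensional family of rational curves of primitive class $h_{S'}-(2g+2n-3)r_n$ sweeping a subvariety birational to a $\mathbb{P}^{\chi-2(p'-g')-1}=\mathbb{P}^2$-bundle over a holomorphic symplectic manifold of dimension $2(n-2)$; this subvariety has dimension $2n-2$, i.e. codimension two, its general fibres are rational surfaces, and they are swept by the lines of the family.

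Next I would transport this to $X$. The deformation equivalence lets me connect $(S'^{[n]},C)$ to $(X,C)$ by an irreducible one-parameter family $\pi:\mathcal X\to B$ along which $C$ stays of Hodge type $(2n-1,2n-1)$, as in Section 3. Since the family on $S'^{[n]}$ has the minimal dimension $2n-2$, Proposition \ref{prop:defcurves}(2) guarantees that the corresponding component of the relative Kontsevich space dominates $B$, so a $(2n-2)$-dimensional family of rational curves of class $C$ survives on $X$ and sweeps a locus $Y$ of dimension at least $2n-2$ by upper semicontinuity, and at most $2n-1$. The crucial point is to exclude that $Y$ is a divisor: a uniruled divisor swept by the primitive class $C$ would contradict \cite[Corollary A.3]{zal}, since the hypothesis $q(h_S)=2g-4$ places us exactly in the exceptional regime $p=g-1<g$ of Proposition \ref{prop:equivalent} where no such divisor exists. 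Hence $\dim Y=2n-2$. Finally, through a general point of $Y$ the curves move in a pencil --- as they do inside each $\mathbb{P}^2$-fibre before deformation --- hence sweep a surface covered by rational curves of class $C$; these are the rational surfaces ruling $Y$.

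The main obstacle I expect is the transport, not the numerology. Proposition \ref{prop:defcurves} controls only the curves, and \emph{a priori} the swept locus could jump up in dimension and become a divisor in the special fibre $X$; it is precisely the non-existence result of \cite{zal} in the exceptional range that forbids this and forces codimension two, so the argument genuinely depends on the sharpness discussion of Subsection \ref{ss:k3no}. A secondary, more technical point is that the two-dimensional rational-surface structure, and not merely a covering family of curves, should persist through the deformation; I would address this by following the incidence variety of curves through a moving point, using that inside the $\mathbb{P}^2$-bundle model the curves through a general point form a one-dimensional family sweeping an entire fibre, a property that deforms with the family.
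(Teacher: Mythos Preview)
Your proposal does not prove Theorem \ref{thm:KLM2_immagine}: that result is quoted verbatim from \cite{klm2} and the paper offers no proof of it. What you have actually written is a proof of Theorem \ref{thm:codim2}, which \emph{uses} Theorem \ref{thm:KLM2_immagine} as a black box --- your own first sentence says as much.

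Treating Theorem \ref{thm:codim2} as the intended target, your argument is essentially the paper's. The paper reaches the same higher-genus representative via Proposition \ref{prop:ultima} (whose proof encodes exactly the shift of representatives you carry out by hand): starting from $(\Sigma^{[n]},h_\Sigma-(2g-1)r_n)$ with $p_a(h_\Sigma)=g-1$ it passes to $(S_1^{[n]},h_{S_1}-(2g+2n-3)r_n)$ with $p'=3g+n-3$ and KLM2 parameter $g'=2g+n-2$, obtains $\chi=2g+1$ and $p'-g'=g-1$, and reads off the $\mathbb P^2$-bundle of codimension two from Theorem \ref{thm:KLM2_immagine}. It then deforms the curves via Proposition \ref{prop:defcurves} and excludes a divisorial jump using \cite{zal}, exactly as you do. Your explicit monodromy check (equal square, equal discriminant class) is what the paper packages in Remark \ref{rmk:traslo} and Corollary \ref{cor:control-of-polarization-dual}.

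The one genuine difference is in the last step. The paper first establishes the codimension-two locus on a \emph{very general} $X'$ in the moduli component and then specializes to the given $X$ by taking flat limits of the rationally chain connected surfaces $F'$; this is how it transports the two-dimensional surface structure, not just the curves. You instead argue directly on $X$ via upper semicontinuity and then propose to follow an incidence variety of curves through a moving point. Your route is viable --- since $S'$ is very general, $S'^{[n]}$ sits over a general point of $B$, so upper semicontinuity of fibre dimension does give $\dim Y\ge 2n-2$ at $X$, and \cite{zal} caps it at $2n-2$ --- but the paper's flat-limit argument is what actually secures the surfaces on $X$, and is cleaner than the incidence argument you sketch for your ``secondary technical point''.
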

%In the statement above, the rational curves are associated to general pencils of degree $n$  on the normalization of a primitive 
%nodal curve of arithmetic genus $p$ and geometric genus $g$. 
The above theorem applies only to finitely many deformation types of $(X,h)$ for any dimension, but it turns out 
that it can be used to produce coisotropic subvarieties of codimension $\geq 2$ in the exceptions to Theorem \ref{thm:main}. 

\begin{prop}\label{prop:ultima}
Let $(X,h)$ be a polarized irreducible holomorphic symplectic manifold of \kntiposp and let $C$ be the primitive curve class equal to 
$h/div(h)$. Suppose that the conditions in Proposition \ref{prop:zalcondition} are not satisfied. 
Then there exist a polarized $K3$ surface $(S,h_S)$ of genus $p=p_a(h_S)$ and an integer $g\leq  p$
such that:
\begin{enumerate}
\item[(i)] $p$ and $g$ satisfy conditions (\ref{eq:nuovobound});
\item[(ii)] $X$ is deformation equivalent to ${S^{[n]}}$ and the class $C$ is sent to $h_S-(g+n-1)r_n$ by the parallel transport.
\end{enumerate}
\end{prop}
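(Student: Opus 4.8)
The plan is to bring $C$ into the ``small'' normal form of Corollary \ref{cor:control-of-polarization-dual}, read off the failure of the Oberdieck--Shen--Yin criterion as a numerical inequality via Proposition \ref{prop:equivalent}, and then rewrite $C$ in a second normal form obtained by raising the coefficient of $r_n$ by one full period $2n-2$ (and the genus of the auxiliary $K3$ surface accordingly), so as to land precisely inside the window (\ref{eq:nuovobound}) demanded by Theorem \ref{thm:KLM2_immagine}. The whole point is that the class appearing in \cite{klm2} has $r_n$-coefficient $g+n-1\geq 2n-2$ (indeed $\chi\geq 2$ forces $g\geq n-1$), whereas the normal form of Corollary \ref{cor:control-of-polarization-dual} has coefficient $<n-1$; bridging this gap within a single monodromy orbit is the crux.

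First I would apply Corollary \ref{cor:control-of-polarization-dual} to deform $(X,C)$ to $(S_0^{[n]}, h_{S_0}-\mu r_n)$ for a $K3$ surface $S_0$ of genus $p_0=p_a(h_{S_0})$ and an integer $\mu\in[0,n-1)$, in particular $\mu\leq n-1$; write $\mu=2g_0-\epsilon$ with $\epsilon\in\{0,1\}$. Since $\mu\leq n-1$ we have $2g_0\leq n$, so Proposition \ref{prop:equivalent} (together with the identical computation in the odd case $\epsilon=1$) applies and tells us that the conditions of Proposition \ref{prop:zalcondition} hold for this class exactly when $p_0\geq g_0$. As those conditions fail by hypothesis, we obtain $g_0\geq p_0+1$, and therefore $\mu=2g_0-\epsilon\geq 2p_0+1$.

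The key step is the reparametrisation. I would set $g:=\mu+n-1$ and $p:=p_0+g$, and take $(S,h_S)$ a $K3$ surface of genus $p$, so that $h_S^2=2p-2$. Because $g+n-1=\mu+2(n-1)\equiv\mu \pmod{2n-2}$, the class $h_S-(g+n-1)r_n$ has the same image in the discriminant group as $C$ up to sign; and a one-line computation using $g+n-1=\mu+2(n-1)$ shows that the choice $p=p_0+g$ is exactly what makes $q\big(h_S-(g+n-1)r_n\big)=q(C)$. By Markman's description of the monodromy orbits (two such classes being deformation equivalent as soon as they have the same Beauville--Bogomolov square and the same discriminant class up to sign, as recalled in Remark \ref{rmk:nozal}), the pairs $(S_0^{[n]},h_{S_0}-\mu r_n)$ and $(S^{[n]},h_S-(g+n-1)r_n)$ lie in the same deformation family; combined with Step 1 by transitivity this yields (ii).

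Finally I would verify (i) and the inequality $g\leq p$. One has $p-g=p_0\geq 1$, giving $g\leq p$, and $\chi=g-n+3=\mu+2$, so conditions (\ref{eq:nuovobound}) become $2p_0+2\leq \mu+2\leq p_0+n+1$, i.e. $2p_0\leq\mu\leq p_0+n-1$. The left inequality follows from $\mu\geq 2p_0+1$ obtained above, and the right one from $\mu\leq n-1\leq p_0+n-1$. I expect the only delicate points to be the faithful translation of the hypothesis through Proposition \ref{prop:equivalent} for both parities of $\mu$, and the justification that shifting the $r_n$-coefficient by the full period $2n-2$ while raising the genus of the $K3$ surface keeps the class inside its monodromy orbit; both rest on results already in place above.
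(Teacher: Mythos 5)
Your proof is correct and follows essentially the same route as the paper's: reduce to the normal form of Corollary \ref{cor:control-of-polarization-dual}, translate the failure of the conditions of Proposition \ref{prop:zalcondition} into $p_0<g_0$ via Proposition \ref{prop:equivalent}, shift the $r_n$-coefficient by one period $2n-2$ with the compensating genus increase $p=p_0+g$, and check (\ref{eq:nuovobound}) exactly as the paper does. The only (cosmetic) difference is the justification of the shift: you match Beauville--Bogomolov squares and discriminant classes and invoke Markman's monodromy criterion as recalled in Remark \ref{rmk:nozal}, whereas the paper appeals to Remark \ref{rmk:traslo} (running Corollary \ref{cor:control-of-polarization-dual} with the system of representatives $[2n-2,3n-3]\cap\N$ of $\Z/(2n-2)\Z$) --- two packagings of the same lattice-theoretic fact.
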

\begin{proof}
By Corollary \ref{cor:control-of-polarization-dual} the pair $(X,C)$ is deformation equivalent to $(\Sigma^{[n]}, h_\Sigma-(2\gamma -\epsilon)r_n )$, where $\Sigma$ is a K3 surface of genus $\pi:=p_a(h_\Sigma)$, $\gamma$ a positive integer and $\epsilon= 0, 1$. As the conditions in Proposition \ref{prop:zalcondition} are not satisfied, by Proposition \ref{prop:equivalent}
we have $p_a(h_\Sigma)< \gamma$.
By Remark \ref{rmk:traslo} we can choose as system of representatives of $\Z/(2n-2)\Z$, up to the action of $-1$, the set $[2n-2, 3n-3]\cap \N$. 
Hence we can deform to a different punctual Hilbert scheme on a polarized K3 surface $(S,h_S)$ such that our curve has class $h_S-(2n-2+2\gamma-\epsilon)r_n$ and the genus $p$ of $S$ is  $p=\pi+n-1+2\gamma-\epsilon$. Set $g:=n-1+2\gamma -\epsilon$.
%The coefficient of $r_n$ in formula \eqref{class} is given by $p'-\delta+n-1$, therefore $p'-\delta=n-1+2g-\epsilon$ and $\delta=p$. 

It follows that $\chi=2\gamma+2-\epsilon$. The conditions we must check are
$$2(p-g)+2\leq \chi\leq (p-g)+n+1.$$
As $p-g=\pi<\gamma$ and $2\gamma-\epsilon \leq n-1$, they are always satisfied and Theorem \ref{thm:KLM2_immagine} applies, giving the desired locus of codimension ${\chi-2(p-g)-1}\geq 2$.
\end{proof}
% Notice that the codimension of the locus covered by the curve on a special point is strictly bigger than on the general point and cannot be one by Proposition \ref{prop:zalcondition}, therefore it is exactly two.

\begin{proof}[Proof of Theorem \ref{thm:codim2}]
Let $C$ be as in the statement. The conditions of Proposition \ref{prop:zalcondition} are not satisfied, as $p_a(h_S)=g-1$. Hence we can apply 
Proposition \ref{prop:ultima} and get a new pair $(S_1^{[n]}, h_{S_1}-((2g-1+n-1)+n-1)r_n)$ satisfying conditions (\ref{eq:nuovobound}). Therefore we can apply
Theorem \ref{thm:KLM2_immagine} to $(S_1^{[n]}, h_{S_1}-((2g-1+n-1)+n-1)r_n)$ to deduce the existence of a $(2n-2)$-dimensional family of rational curves
covering a coisotropic subvariety. By Proposition \ref{prop:defcurves} these rational curves deform to the initial variety $X$ and by construction they have class equal to $C$. By \cite{zal}, the codimension of the locus covered by their deformations on $X$ cannot be less the $2$. Thus if it is two on 
$S_1^{[n]}$ that must be the case also on a general point $X'$ in the component of the moduli space $\mathfrak M$ containing $(X,C)$.
Observe that in this case we have $\chi=2g+1$ and the locus covered by these rational curves has therefore codimension $2g+1-2g+1=2$.
Hence we have a coisotropic subvariety $Z'\subset X'$ covered by rationally chain connected surfaces $F'$. The flat limit $Z\subset X$ of $Z'$ 
is covered by the flat limits $F$ of the RCC surfaces $F'$ and of course $F$ is RCC. 
The theorem follows.
\end{proof}
\begin{rmk}\label{rmk:ecc}
{\rm{For $n=8,9$, where the first exceptions discovered by \cite{zal} appear, the Theorem above applies. 
This is not the case for the exceptions of classes $h_S-8r_{10}$ and $h_S-10r_{10}$ in dimension $20$ (cf. Theorem \ref{thm:nleq13}).
}}
\end{rmk}

%%%%%%%%%%%%%%%%%%%%
%
\section{Application to $0$-cycles}
%
%%%%%%%%%%%%%%%%%%%%
In what follows we always consider Chow groups with rational coefficients. Throughout this section, let $X$ be an irreducible holomorphic symplectic variety. If $Y$ is a variety, let $CH_0(Y)_{hom}$ be the subgroup of $CH_0(Y)$ consisting of zero-cycles of degree zero.

\begin{definition}%\label{definition:S1}
Let $D$ be an irreducible divisor on $X$. We denote by $S_1 CH_0(X)_{D,hom}$ the subgroup  $$S_1 CH_0(X)_{D,hom}:=\Im \big( CH_0(D)_{hom}\to CH_0(X)\big),$$
of $CH_0(X)$. 
We denote by $S_1 CH_0(X)_{D}$ the subgroup 
$$S_1 CH_0(X)_{D}:=\Im \big( CH_0(D)\to CH_0(X)\big)$$
of $CH_0(X)$.
\end{definition}

\begin{lem}\label{lemma:basic-equality-S1}
Let $D$ and $D'$ be two irreducible uniruled divisors on $X$ and $R$ and $R'$ the  general curves in the respective rulings. If $D.R'\neq 0$ and $D'.R\neq 0$,
%either both $D$ and $D'$ are ample or $q(D)<0$, $q(D')<0$ and  $q(D, D')\neq 0$, 
then $S_1CH_0(X)_D=S_1CH_0(X)_{D'}$ and $S_1CH_0(X)_{D, hom}=S_1CH_0(X)_{D', hom}$
\end{lem}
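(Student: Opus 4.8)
The plan is to prove the two inclusions $S_1CH_0(X)_D\subseteq S_1CH_0(X)_{D'}$ and $S_1CH_0(X)_{D'}\subseteq S_1CH_0(X)_D$ separately; since the hypotheses are symmetric under exchanging $(D,R)$ with $(D',R')$, it suffices to establish the first one using $D'\cdot R\neq 0$, and then repeat the argument with the roles reversed using $D\cdot R'\neq 0$. The whole argument rests on the elementary \emph{constant cycle} observation: if $C$ is a connected nodal curve of arithmetic genus zero (a tree of rational curves) and $g:C\to X$ is a morphism, then $g_*[c]=g_*[c']$ in $CH_0(X)$ for any two closed points $c,c'\in C$, because $CH_0(C)\to\Z$, $[c]\mapsto\deg$, is an isomorphism.

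First I would record the geometric input coming from the ruling. By definition $D=\phi(\mathcal C)$ is the image of a family $p:\mathcal C\to S$ of genus $0$ stable curves under a morphism $\phi:\mathcal C\to X$, and for each $s\in S$ the image $R_s:=\phi(p^{-1}(s))$ is a ruling curve, with $R=R_0$ the general one. Since $S$ is irreducible and the homology class of the fibres of a family of curves is constant in $H_2(X,\Z)$, we have $[R_s]=[R]$ for every $s$, whence $D'\cdot R_s=D'\cdot R\neq 0$. In particular $R_s$ cannot be disjoint from $D'$ (a curve disjoint from $D'$ meets it in intersection number zero), so there is a closed point $y_s\in R_s\cap D'$.

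Next I would run the constant-cycle argument pointwise. An arbitrary closed point $x\in D$ is of the form $x=\phi(c)$ for some $c\in p^{-1}(s)$ with $s=p(c)$, and the point $y_s\in R_s\cap D'$ found above is $y_s=\phi(c')$ for some $c'\in p^{-1}(s)$. Since $p^{-1}(s)$ is a genus $0$ stable curve, applying the observation to $\phi|_{p^{-1}(s)}$ gives $[x]=[\phi(c)]=[\phi(c')]=[y_s]$ in $CH_0(X)$, and $y_s\in D'$ shows $[x]\in S_1CH_0(X)_{D'}$. As $CH_0(D)$ is generated by the classes of its closed points, this yields $S_1CH_0(X)_D\subseteq S_1CH_0(X)_{D'}$. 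The replacement of each $x$ by $y_s$ preserves degree, so it carries $CH_0(D)_{hom}$ into the image of $CH_0(D')_{hom}$ and gives the corresponding inclusion for the homologically trivial parts. Exchanging $D$ and $D'$ and invoking $D\cdot R'\neq 0$ furnishes the reverse inclusions, hence the two asserted equalities.

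The one point requiring care — and the step I would flag as the main subtlety — is that the constant-cycle conclusion must hold for \emph{every} point of $D$, including those lying on special (reducible or non-reduced) fibres of $p$, and not merely for a general point. This is precisely why it matters that $p:\mathcal C\to S$ is a family of genus $0$ \emph{stable} curves: every fibre, general or special, is a connected arithmetic-genus-$0$ nodal curve, so $CH_0$ of each fibre is generated by a single point and the constant-cycle property propagates through all of $D=\phi(\mathcal C)$. No generic-flatness or ``up to a countable union'' hedging is then needed, and the fact that $D'\cdot R\neq 0$ guarantees that each fibre genuinely meets $D'$.
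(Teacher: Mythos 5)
Your proof is correct and is essentially the paper's own argument: both rest on the two facts that constancy of the fiber class in the family turns $D'\cdot R\neq 0$ into the statement that \emph{every} ruling curve of $D$ meets $D'$, and that all points on a genus-$0$ (tree) fiber are rationally equivalent in $X$. The paper merely packages this differently, taking finite rulings $\widetilde D\to T$, $\widetilde D'\to T'$ and showing both $\pi^{-1}(\Sigma)\to T$ and $(\pi')^{-1}(\Sigma)\to T'$ are surjective for $\Sigma=D\cap D'$, so that both groups equal $\Im\big(CH_0(\Sigma)\to CH_0(X)\big)$, whereas you prove the two inclusions symmetrically -- a cosmetic difference.
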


\begin{proof}
Let $\pi : \widetilde D\ra T$ and $\pi' : \widetilde D'\ra T'$ be rulings of varieties $\widetilde D$ and $\widetilde D'$ mapping finitely to $D$ and $D'$ respectively. The curves $R$ and $R'$ are the images of the general fibers of $\pi$ and $\pi'$ respectively. %By the hypotheses on $D$ and $D'$ (and \cite[Proposition 6.2]{Markman11} in the negative case), the assumption translates to the inequalities $D.R'\neq 0$ and $D'.R\neq 0$. 
As a consequence of the hypothesis, both projections in the following diagram
\begin{equation}
\xymatrix{
\pi^{-1}(\Sigma)\ar[d]_{\pi_{|\Sigma}}\ar[r]&\Sigma:=D\cap D'&(\pi')^{-1}(\Sigma) \ar[l]\ar[d]^{\pi_{|\Sigma} '}\\
T& & T'}
\end{equation}
are surjective, which implies that  
$$
S_1 CH_0(X)_D= \Im\big ( CH_0(\Sigma)\to CH_0(X)\big)= S_1 CH_0(X)_{D'}.
$$
and
$$
S_1 CH_0(X)_{D, hom}= \Im\big ( CH_0(\Sigma)_{hom}\to CH_0(X)\big)= S_1 CH_0(X)_{D', hom}.
$$
\end{proof}

\begin{proof}[Proof of Theorem \ref{thm:appliA}] We give the proof for $S_1 CH_0(X)_{D}$. The proof for $S_1 CH_0(X)_{D,hom}$ is exactly the same. %Lemma \ref{lemma:numerical-connectedness} shows that the groups $S_1CH_0(X)_{H_i}$ are independent of $i$. Let us denote them by $S_1CH_0(H)$.

Applying Corollary \ref{cor:divisor}, we may find an ample divisor $H$, all of whose irreducible components $H_i$ are ruled by a rational curve $R_i$ of class $\alpha$, Poincar\'e dual to that of $H$. First note that, since $H$ is ample, for any pair of indices $i,j$ we have
$$
 H_i \cdot R_j =q(H_i,H) >0.
$$ 
Then, by Lemma \ref{lemma:basic-equality-S1} we conclude that the groups $S_1CH_0(X)_{H_i}$ are independent of $i$. 

Let $D$ be an irreducible uniruled divisor on $X$ and $R$ a general curve of its ruling. Since $H$ is ample, we may find an integer $i$ such that $H_i.R\neq 0$. Furthermore, as above, we have $R_i.D\neq 0.$ By Lemma \ref{lemma:basic-equality-S1}, we obtain the equality
$$S_1CH_0(X)_{D}=S_1CH_0(X)_{H_i},$$
which concludes the proof.
\end{proof}
%\begin{rmk}
%{\em With the approach of \cite{Voi15} in mind, let us note that Theorem \ref{thm:appliA} holds for divisors whose points all have a positive dimensional orbit under rational equivalence. Indeed, by \cite[Theorem 1.3]{Voi15}, such divisors are covered by constant cycle curves, which might not be rational. The proof above shows indeed that if $X$ 
%possesses an ample divisor $H=\sum_{i\in I} \alpha_i H_i$ such that for all $i$, $H_i$ is an irreducible  divisor covered by constant cycle curves, then any two irreducible divisors $D_1, D_2$ covered by constant cycle curves define the same subgroup $S_1 CH_0(X)_{D_1}=S_1 CH_0(X)_{D_2}$.}
%\end{rmk}
%
%\medskip

Thanks to Theorem \ref{thm:appliA} we can drop the dependence on $D$ from the notation and in what follows, under the same hypotheses, we will simply write $S_1 CH_0(X)$ and $S_1 CH_0(X)_{hom}$ for the groups $S_1CH_0(X)_D$ and $S_1CH_0(X)_{D, hom}$. 

%\giovanni{

\begin{prop}\label{prop:appli}
Let $X$ be a projective holomorphic symplectic variety, and let $D$ be an irreducible uniruled divisor on $X$. Suppose that $X$ possesses an ample ruling curve $R$. Then
$$
 S_1 CH_0(X)_{hom}= D\cdot CH_1(X)_{hom} \ \ {\textrm{and }}\ 
 S_1 CH_0(X)= D\cdot CH_1(X).
$$
\end{prop}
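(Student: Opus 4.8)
The plan is to prove each of the two equalities by a pair of inclusions, working throughout with $\Q$-coefficients and using Theorem \ref{thm:appliA} to identify $S_1CH_0(X)_D$ with $S_1CH_0(X)$. The inclusion $D\cdot CH_1(X)\subseteq S_1CH_0(X)$ is the formal one: for a $1$-cycle $z$ I would represent it by irreducible curves and observe that for each component $Z$ the refined intersection with the Cartier divisor $D$ is supported on $Z\cap D\subseteq D$ when $Z\not\subseteq D$, and is $c_1(\mathcal O_X(D)|_Z)\cap[Z]$, again supported on $D$, when $Z\subseteq D$. Thus $D\cdot z\in\Im(CH_0(D)\to CH_0(X))=S_1CH_0(X)$; moreover if $z$ is homologically trivial then the degree of $D\cdot z$ equals $[D]\cdot z=0$, so $D\cdot z$ lies in $S_1CH_0(X)_{hom}$.

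The substantial inclusion is $S_1CH_0(X)\subseteq D\cdot CH_1(X)$. I would first fix the ample ruling curve $R_0$ of class $\alpha$ (so that $\alpha^\vee$ is ample) and apply Corollary \ref{cor:divisor} to produce an ample uniruled divisor $H$ all of whose components are ruled by curves of class $\alpha$. Since $S_1CH_0(X)=\Im(CH_0(D)\to CH_0(X))$ is generated by classes $[x]$ of closed points $x\in D$, it suffices to treat such classes. Given $x$, I would use that $x$ lies on a fibre $R'$ of the ruling of $D$; as $R'$ is a genus-$0$, hence rationally chain connected, curve, all of its points have the same class in $CH_0(X)$. Because $H$ is ample it meets the curve $R'$, so I may choose $w\in R'\cap H$ with $[x]=[w]$. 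Now $w$ lies on a ruling curve $R$ of $H$ of class $\alpha$, and for a generic such choice $R\not\subseteq D$; since $R$ is rationally chain connected, all its points are rationally equivalent, so the $0$-cycle $D\cdot[R]$ equals $(D\cdot R)\,[w]$ in $CH_0(X)$.

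The positivity that makes this work is $D\cdot R=q(\alpha^\vee,[D])>0$: indeed $\alpha^\vee$ is ample and $[D]$ is the class of a nonzero effective divisor, so the Fujiki relation $A^{2n-1}\cdot D=c\,q(A)^{n-1}q(A,[D])$ applied to $A=\alpha^\vee$ (with $c>0$ and $q(A)>0$) forces $q(\alpha^\vee,[D])>0$. Hence $[w]=\tfrac{1}{D\cdot R}\,D\cdot[R]\in D\cdot CH_1(X)$, so $[x]=[w]\in D\cdot CH_1(X)$ and $S_1CH_0(X)\subseteq D\cdot CH_1(X)$. For the homological refinement I would apply the same construction to a degree-$0$ cycle $\sum_i n_i[x_i]$ with $\sum_i n_i=0$: writing $[x_i]=\tfrac{1}{D\cdot R}\,D\cdot[R_i]$ yields $\sum_i n_i[x_i]=\tfrac{1}{D\cdot R}\,D\cdot\big(\sum_i n_i[R_i]\big)$, and the auxiliary $1$-cycle $\sum_i n_i[R_i]$ has homology class $(\sum_i n_i)\alpha=0$, so it lies in $CH_1(X)_{hom}$ and the product lands in $D\cdot CH_1(X)_{hom}$.

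The hard part will be the two-step ``sliding'' in the second inclusion together with its bookkeeping: one first moves an arbitrary point of $D$ along the ruling of $D$ onto $D\cap H$, and only there can one invoke an ample ruling curve to rewrite a single point class as a rational multiple of an intersection product. The essential inputs are the positivity $q(\alpha^\vee,[D])>0$ (so the coefficient is invertible over $\Q$) and the observation that the classes $[R]\in CH_1(X)$ of the ample ruling curves genuinely vary within the fixed homology class $\alpha$, so the cycles $D\cdot[R]$ depend on $R$; this is exactly what prevents the argument from collapsing all point classes to a single one and what makes the homological statement come out correctly. I would also check the (routine) genericity points that every point of $D$ lies on a fibre of the ruling, which holds since $\widetilde D\to D$ is surjective with rationally chain connected fibres, and that the ample ruling curve $R$ through $w$ can be chosen not contained in $D$.
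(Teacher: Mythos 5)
Your proof is correct and follows essentially the same route as the paper: both reduce to points lying on the ample uniruled divisor $H$ produced by Corollary \ref{cor:divisor}, rewrite each such point class as $\tfrac{1}{D\cdot\alpha}\,D\cdot[R]$ using that the ruling curves of class $\alpha$ are rational (so all their points are rationally equivalent in $X$) and that $D\cdot\alpha=q(\alpha^\vee,[D])>0$, and then sum up, noting that the auxiliary $1$-cycle $\sum_i n_i[R_i]$ is homologically trivial exactly when $\sum_i n_i=0$. The only cosmetic differences are that you inline the sliding argument underlying Theorem \ref{thm:appliA} (moving a point of $D$ along its ruling into $D\cap H$) rather than just invoking that theorem to replace $D$ by a component of $H$, and that you justify the positivity of the Beauville--Bogomolov pairing of an ample class with an effective divisor via the Fujiki relation where the paper takes it as standard.
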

\begin{proof}
We give the proof for $S_1 CH_0(X)_{hom}$. Basic intersection theory \cite[chapter 6]{FultonIT} guarantees the inclusion 
$$D\cdot CH_1(X)_{hom}\subset \Im(CH_0(D)_{hom}\ra CH_0(X)_{hom}) = S_1 CH_0(X)_{hom}.$$
%
%Let $\pi : \widetilde D\ra T$ be a ruling of a variety $\widetilde D$ mapping finitely to $D$.
%To prove the other inclusion we first consider the case when $q(D)\not=0$. 
%Let $Z:=\sum n_i x_i\in CH_0(D)_{hom}$. For each $x_i$, let $D_{x_i}$ be the image of the fiber of $\pi$ containing it. By Corollary \ref{cor:uniruled}, the intersection $D.D_{x_i}$ is a zero-cycle of nonzero degree, so that we can write 
%$$
% x_i=\lambda D\cdot D_{x_i}%\ {\textrm{inside}}\ CH_0(D) 
%$$
%as elements of $CH_0(X)$, where $\lambda$ is independent of the point $x_i$ in $D$.  As a consequence, the equality
%$$
%\sum n_i x_i= \lambda D\cdot(\sum n_i D_{x_i})%\ {\textrm{inside}}\ CH_0(D).
%$$
%holds in $CH_0(X)$. Since $\sum n_i=0$ we have  $\sum n_i D_{x_i}\in CH_1(X)_{hom}$ and we are done. 

To prove the other inclusion
%If $q(D)=0$, 
consider any irreducible uniruled component $H_{i}$ of the ample divisor $H$ ruled by $R$. 
By Theorem \ref{thm:appliA} we have $S_1 CH_0(X)_{D, hom}=S_1 CH_0(X)_{H_{i_0}, hom}$.
Notice moreover that by the hypothesis,
\begin{equation}\label{eq:not0}
D\cdot R = D \cdot \lambda H^\vee = \lambda q(D,H) \not = 0 \textrm{ for some } \lambda\not = 0.
\end{equation}
Consider 
$$Z:=\sum n_k x_k\in S_1 CH_0(X)_{H_{i_0}, hom},
$$
where the $x_k$ lie in $H_{i_0}$. For each $x_k$, let $R_{x_k}$ be a curve in the ruling of $H_{i}$ containing it. Then, by (\ref{eq:not0}), there exists a rational number $\mu>0$, independent of $k$, such that
$$ 
 x_k= \mu D\cdot R_{x_k}. 
$$
Hence 
$$
Z=\sum n_i x_i=\mu D\cdot(\sum n_k R_{x_k})%\ {\textrm{inside}}\ CH_0(D).
$$
holds in $CH_0(D)$, from which  we see that 
$$
S_1 CH_0(X)_{D, hom}=S_1 CH_0(X)_{H_{i_0}, hom}\subset D\cdot CH_1(X)_{hom}.
$$
\end{proof}

%\begin{thm}\label{thm:appliB}
%Let $X$ be a projective holomorphic symplectic variety. %symplectic fourfold of $K3^{[2]}$-type. 
%Suppose that the Picard group of $X$ is generated by classes of uniruled divisors.
%Then for any non-zero $L\in \Pic(X)$ we have
%$$
%S_1 CH_0(X)_{hom}= L\cdot CH_1(X)_{hom} \ \ {\textrm{and }}\ 
% S_1 CH_0(X)= L\cdot CH_1(X).$$
%\end{thm}
%

\begin{proof}[Proof of Theorem \ref{thm:appliB}] 
Write $L=\sum m_i D_i$, where $D_i$ is irreducible and uniruled for all $i$.  
We have 
$$
L\cdot CH_1(X)_{hom}\subset \sum D_i\cdot CH_1(X)_{hom}=S_1 CH_0(X)_{hom}
$$
where the last equality holds thanks to Proposition \ref{prop:appli} (notice that we do not automatically have the equality, since some of the $m_i$'s may be negative).

To prove the other inclusion we argue as in Proposition \ref{prop:appli}. 
We take an irreducible uniruled divisor $D$ such that $L\cdot R_D\not =0$, where $R_D$ is a curve in the ruling of $D$. Such a divisor exists by the hypothesis, as we can take any irreducible component of the divisor $H$ ruled by the ample curve $R$. 
Let  $Z:=\sum n_i x_i\in \Im (CH_0(D)_{hom}\to CH_0(X))=S_1CH_0(X)$.
Then the equality
$$
\sum n_i x_i=\lambda L\cdot(\sum n_i D_{x_i})%\ {\textrm{inside}}\ CH_0(D).
$$
holds in $CH_0(X)$ for some rational number $\lambda$, hence 
$$
S_1 CH_0(X)_{D}\subset L\cdot CH_1(X)_{hom}.
$$
\end{proof}
%}

%We end this section by noticing that Theorem \ref{thm:mainapp1} (respectively Theorem \ref{thm:mainapp2}) follows immediately from Theorem \ref{thm:main} and Theorem \ref{thm:appliA}
%(respectively Theorem \ref{thm:appliB}).  

%%%%%%%%%%%%%%%%%%%%%%%%
%
\section{Some open questions}
%
%%%%%%%%%%%%%%%%%%%%%%%%

We briefly discuss some questions raised by our results above. 
Theorem \ref{thm:main} suggests a natural extension to general projective holomorphic symplectic varieties in the following way.

\begin{ques}\label{ques:subvar}
Let $X$ be a projective holomorphic symplectic variety of dimension $2n$, and let $k$ be an integer between $0$ and $n$. Does there exist a subscheme $Y_k$ of $X$ of pure dimension $2n-k$ such that its 0-cycles are supported in dimension $2n-2k$ ?
\end{ques}

The question above has been put into a larger perspective by Voisin, in \cite{Voi15}, as a 
key step towards the construction of a multiplicative splitting in the Chow group. 
In view of our constructions, it seems natural to hope for a positive answer for Question \ref{ques:subvar}. Note that this is the case if $X$ is of the form $S^{[n]}$ for some $K3$ surface $S$ as follows by taking $Y$ to be the closure in $S^{[n]}$ of the locus of points $s_1+\ldots+s_n$, where the $s_i$ are distinct points of $S$, $k$ of which lie on a given rational curve of $S$.

It would be interesting to refine Question \ref{ques:subvar} to specify the expected cohomology classes of the subschemes $Y_k$. 

\bigskip

The particular case of middle-dimensional subschemes seems of special interest in view of the study of rational equivalence on holomorphic symplectic varieties. 

\begin{ques}\label{ques:Lagrangian}
Let $X$ be a projective holomorphic symplectic variety of dimension $2n$. Does there exist a rationally connected subvariety $Y$ of $X$ such that $Y$ has dimension $n$ and nonzero self-intersection ?
\end{ques}

A positive answer to question \ref{ques:Lagrangian} would lead to the existence of a canonical zero-cycle of degree $1$ on $X$, as in the case of $K3$ surfaces. This raises the following question.

\begin{ques}
Assume that Question \ref{ques:Lagrangian} has a positive answer for $X$ and let $y$ be any point of $Y$. Let $H_1, \ldots, H_r$ be divisors on $X$, and let $k_1, \ldots, k_{n}$ be nonnegative integers such that $r+\sum_i 2ik_i =2n$. 

Do we have 
\begin{equation}\label{eq:beauville}
H_1\cdot \ldots \cdot H_r \cdot c_{2}(X)^{k_1}\cdot \ldots \cdot c_{2n}(X)^{k_n}= \deg (H_1\cdot \ldots \cdot H_r \cdot c_{2}(X)^{k_1}\cdot \ldots \cdot c_{2n}(X)^{k_n})\cdot y
\end{equation}
 inside $CH_0(X)$ ?
\end{ques}
The existence of a degree 1 $0$-cycle $c_X$ verifying the equality (\ref{eq:beauville}) above
is a consequence of the Beauville conjecture. 
We wonder whether such a $0$-cycle can be realized  in a geometrically meaningful way as  a point on a rationally connected half-dimensional subvariety. 

Even in the case of a general polarized fourfold of $K3^{[2]}$-type, we do not know the answer to the preceding questions.

\bigskip

Finally, Question \ref{ques:subvar} raises a counting problem as in the case of the Yau-Zaslow conjecture for rational curves on $K3$ surfaces \cite{YauZaslow95}, which was solved in \cite{KlemmMaulikPandharipandeScheidegger10}. We do not know of a precise formulation for this question.

\bibliographystyle{alpha}
\bibliography{CMP}

\end{document}